\theoremstyle{plain}
\newtheorem{lem}{Lemma}[section]
\newtheorem{prop}[lem]{Proposition}
\newtheorem{cor}[lem]{Corollary}
\newtheorem{thm}[lem]{Theorem}
\newtheorem*{thm*}{Theorem}
\newtheorem*{prop*}{Proposition}
\newtheorem*{cor*}{Corollary}
\newtheorem*{lem*}{Lemma}
\theoremstyle{definition} 
\newtheorem{ex}[lem]{Example}
\newtheorem{rem}[lem]{Remark}
\newtheorem{con}{Construction}[section]
\newtheorem{defn}{Definition}[section]
\newtheorem*{fact}{Fact}
\DeclareMathOperator{\Aut}{\mathrm{Aut}}
\DeclareMathOperator{\Hom}{\mathrm{Hom}}
\DeclareMathOperator{\Char}{\mathrm{char}}
\DeclareMathOperator{\Spec}{\mathrm{Spec}}
\DeclareMathOperator{\Gal}{\mathrm{Gal}}
\DeclareMathOperator{\Sm}{\mathrm{Sm}}
\DeclareMathOperator{\Deg}{\mathrm{deg}}
\DeclareMathOperator{\Res}{\mathrm{Res}}
\DeclareMathOperator{\Var}{\mathrm{Var}}
\DeclareMathOperator{\Dim}{\mathrm{dim}}
\DeclareMathOperator{\Ch}{\mathrm{ch}}
\DeclareMathOperator{\Td}{\mathrm{td}}
\DeclareMathOperator{\Bla}{\mathrm{mod}}
\DeclareMathOperator{\Sch}{\mathrm{Sch}}
\DeclareMathOperator{\Sets}{\mathrm{Sets}}
\DeclareMathOperator{\Ker}{\mathrm{ker}}
\DeclareMathOperator{\Id}{\mathrm{id}}
\DeclareMathOperator{\Pol}{\text{\![\![\!}}
\DeclareMathOperator{\Por}{\text{\!]\!]}}
\DeclareMathOperator{\Mel}{\text{\!(\!(\!}}
\DeclareMathOperator{\Mer}{\text{\!)\!)}}
\DeclareMathOperator{\Em}{{\mathfrak{m}}}
\begin{document}
\parindent 0em

\title{On rational points of varieties over local fields having a model with tame quotient singularities}
\author{Annabelle Hartmann}
\date{}

\begin{abstract}
We study rational points on a smooth variety $X$ over a complete local field $K$ with algebraically closed residue field,
and models $\mathcal{X}$ of $X$ with tame quotient singularities.
If $\mathcal{X}$ is the quotient of a Galois action on a weak N\'eron
model of the base change of $X$ to a tame Galois extension of $K$,
then we construct a canonical weak N\'eron model of $X$ with a map to
$\mathcal{X}$, and examine its special fiber.
As an application we get examples of singular models $\mathcal{X}$ such that there are $K$-rational points of $X$ specializing to a singular point of $\mathcal{X}$.
Moreover we obtain formulas for the motivic Serre invariant and the rational volume,
and the existence of $K$-rational points on certain $K$-varieties with potential good reduction.
\end{abstract}

\maketitle

\section{Introduction}

In this article we study smooth and proper varieties over a complete local field $K$ with algebraically closed residue field $k$
with regard to the existence
of $K$-rational points.

A standard way to detect rational points of varieties over complete local fields is to look at models.
A model of a $K$-variety $X$ is an integral, flat scheme $\mathcal{X}$ over the ring of integers $\mathcal{O}_K$ such that
the generic fiber of $\mathcal{X}$ is isomorphic to $X$.
There is a natural map $\mathcal{X}(\mathcal{O}_K)\to X(K)$, 
and a specialization map $\mathcal{X}(\mathcal{O}_K)\to \mathcal{X}_k(k)$, where $\mathcal{X}_k\subset \mathcal{X}$ is the special fiber.
If $\mathcal{X}$ is a proper $\mathcal{O}_K$-scheme, then the natural map $\mathcal{X}(\mathcal{O}_K)\to X(K)$ is a bijection.
As $\mathcal{O}_K$ is Henselian, the specialization map is surjective whenever $\mathcal{X}$ is smooth over $S:=\Spec(\mathcal{O}_K)$.

If $\mathcal{X}$ is regular, then every $\mathcal{O}_K$-point of $\mathcal{X}$ factors through the smooth locus of $\mathcal{X}$ over $S$, see \cite[Chapter 3.1, Proposition 2]{MR1045822}.
Hence if a $K$-variety $X$ has a regular and proper model $\mathcal{X}\to S$,
then $X$ has a $K$-rational point if and only if the special fiber of the smooth locus of $\mathcal{X}$ over $S$ is not empty.
But if $\mathcal{X}$ is not regular, then there may exist $\mathcal{O}_K$-points intersecting the singular locus of $\mathcal{X}$ over $S$,
see Example \ref{exsec}.

The existence of weak N\'eron models plays an important role in the study of rational points.
A weak N\'eron model of a smooth $K$-variety $X$ is a smooth and separated model $\mathcal{Z}$ of $X$, such that the natural map from $\mathcal{Z}(\mathcal{O}_K)$ to $X(K)$ is a bijection.
Hence if $X$ admits a weak N\'eron model, then $X$ has a $K$-rational point if and only if the special fiber of this weak N\'eron model is not empty.
It is known that every smooth and proper $K$-variety has a weak N\'eron model, see \cite[Chapter 3.5, Theorem~2]{MR1045822}.
But in general a weak N\'eron model is not unique.

The smooth locus over $S$ of a regular, proper model of a smooth, proper $K$-variety $X$ is a weak N\'eron model of $X$.
There is a way to obtain a weak N\'eron model from any proper model, the so called N\'eron smoothening, see \cite[Chapter~3]{MR1045822},
which is constructed by blowing up singular points having sections through them. 
But given a singular point,
it is hard to decide a priori whether there is a section containing that point. 
Therefore the N\'eron smoothening does not yield a straightforward method for constructing a weak N\'eron model from an arbitrary singular model.

\medskip

In this article we consider the following situation.
Let $X$ be a $K$-variety,
let $L/K$ be a Galois extension,
and let $X_L$ be the base change of $X$ to $L$.
Then $G\!:=\Gal(L/K)$ acts on $X_L$ such that $X_L/G\cong X$.
Consider a model $\mathcal{Y}$ of $X_L$ with a good $G$-action, i.e.~an action such that every orbit is contained in an affine open subscheme of $\mathcal{Y}$, extending this action on $X_L$.
Then the quotient $\mathcal{X}\!:=\mathcal{Y}/G$ is an $\mathcal{O}_K$-scheme and in fact a model of $X$.
In general $\mathcal{X}$ will have tame quotient singularities,
and there can be $\mathcal{O}_K$-points through the singular locus, see Example \ref{the example} and Example \ref{exsec}.

Note that interesting models of $X_L$ with an action as required really exist and appear naturally.
For example models of $X_L$ obtained from models of $X$ by base change and normalization have such a $G$-action, and
these are exactly the techniques used to construct a model with semistable reduction in the semistable reduction theorem, 
see \cite[Chapter~10, Proposition~4.6]{ MR1917232}.
Moreover, we show in Theorem~\ref{gaction on wnm}
that if $X$ is a proper and smooth $K$-variety, then there is always a weak N\'eron model of $X_L$
to which the Galois action on $X_L$ extends.
To construct such a weak N\'eron model,
we show in particular that the N\'eron smoothing as constructed in \cite[Chapter 3]{MR1045822} is compatible with actions of the Galois group as described above.

As the model $\mathcal{X}$ obtained by taking the quotient is singular in general and has sections through the singular locus,
neither $\mathcal{X}$ nor its smooth locus over $S$ will be a weak N\' eron model of $X$.
But there is a way to construct a weak N\'eron model of $X$ out of a weak N\'eron model of $X_L$ with a $G$-action extending the Galois action on the generic fiber.
In this context we show the following theorem.

\begin{thm*}
\emph{(Theorem \ref{main theorem})}
Let $L/K$ be a tame Galois extension, 
let $X$ be a smooth $K$-variety, and let $X_L$ be the base change of $X$ to $L$.
Let $ \mathcal{Y}$ be a smooth model of $X_L$ with a $G:=\Gal(L/K)$-action extending the Galois action on $X_L$.
Let $\mathcal{X}:= \mathcal{Y}/G$ be the quotient.\\
Then there is a smooth model $\mathcal{Z}$ of $X$ and a separated $S:=\Spec(\mathcal{O}_K)$-morphism ${\Phi: \mathcal{Z}\to \mathcal{X}}$,
such that the induced map $\mathcal{Z}(\mathcal{O}_K)\to \mathcal{X}(\mathcal{O}_K)$ is a bijection, and such that
for all smooth, integral $S$-schemes $\mathcal{V}$ and
all dominant $S$-morphisms $\Psi: \mathcal{V}\to \mathcal{X}$ there is a unique S-morphism $\Psi':\mathcal{V}\to \mathcal{Z}$
such that $\Phi \circ \Psi'=\Psi$.
In particular $\mathcal{Z}$ is unique.\\
If $\mathcal{Y}$
is a weak N\'eron model of $X_L$, then $\mathcal{Z}$ is a weak N\'eron model of $X$.
\end{thm*}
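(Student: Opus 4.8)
The plan is to reduce the final assertion, via the parts of the theorem already established, to the single statement that the structural map $\mathcal{X}(\mathcal{O}_K)\to X(K)$ of the model $\mathcal{X}$ is a bijection, and then to prove that by combining the quotient description $\mathcal{X}=\mathcal{Y}/G$ with Galois descent. First I would collect the formal points. We already know that $\mathcal{Z}$ is a smooth model of $X$ and that $\Phi$ induces a bijection $\mathcal{Z}(\mathcal{O}_K)\xrightarrow{\sim}\mathcal{X}(\mathcal{O}_K)$. When $\mathcal{Y}$ is a weak N\'eron model it is separated over $\mathcal{O}_L$, hence over $S$; since the $G$-action on $\mathcal{Y}$ is good and $G$ is finite, the quotient $\mathcal{X}=\mathcal{Y}/G$ is again separated over $S$ (a general property of good quotients of separated schemes by finite groups), and therefore so is $\mathcal{Z}$ because $\Phi$ is separated. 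Thus $\mathcal{Z}$ is a smooth separated model of $X$, and the only thing left to check is that $\mathcal{Z}(\mathcal{O}_K)\to X(K)$ is bijective. As $\Phi$ is a morphism of models of $X$, this map coincides with the composite $\mathcal{Z}(\mathcal{O}_K)\xrightarrow{\sim}\mathcal{X}(\mathcal{O}_K)\to X(K)$, so it suffices to prove that $\mathcal{X}(\mathcal{O}_K)\to X(K)$ is a bijection.

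The heart of the argument is the identification $\mathcal{X}(\mathcal{O}_K)=\mathcal{Y}(\mathcal{O}_L)^G$. Note that $\Spec \mathcal{O}_L\to\Spec \mathcal{O}_K$ is the categorical quotient, since $\mathcal{O}_L^G=\mathcal{O}_K$, and that the quotient map $\pi\colon\mathcal{Y}\to\mathcal{X}$ is finite and surjective. For $\tilde z\in\mathcal{Y}(\mathcal{O}_L)^G$ the composite $\pi\circ\tilde z$ is invariant under the $G$-action on $\Spec \mathcal{O}_L$ (because $\pi$ coequalizes the $G$-action on $\mathcal{Y}$ and $\tilde z$ is invariant), so it descends uniquely to an element of $\mathcal{X}(\mathcal{O}_K)$; this defines the comparison map. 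For surjectivity, given $z\in\mathcal{X}(\mathcal{O}_K)$, pull it back to a $G$-invariant morphism $z_L\colon\Spec \mathcal{O}_L\to\mathcal{X}$; its generic fibre lands in $X=\mathcal{X}_K$, and since the generic fibre of $\pi$ is the projection $X_L\to X$, which is a bijection on $L$-points, there is an $L$-point of $\mathcal{Y}$ lifting $z_L$ over the generic point. As $\pi$ is finite, hence proper, the valuative criterion upgrades this to a point $\tilde z\in\mathcal{Y}(\mathcal{O}_L)$ with $\pi\circ\tilde z=z_L$, unique with that property; since for each $g\in G$ the translate $g\cdot\tilde z$ under the induced action of $G$ on $\mathcal{Y}(\mathcal{O}_L)$ is again such a lift, uniqueness forces $\tilde z\in\mathcal{Y}(\mathcal{O}_L)^G$, and it maps to $z$. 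Injectivity is the same argument: two invariant lifts of a given $z$ agree on the generic fibre (again because $X_L\to X$ is injective on $L$-points) and hence agree by the uniqueness in the valuative criterion.

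It then remains to run Galois descent. Because $\mathcal{Y}$ is a weak N\'eron model of $X_L$, the natural map $\mathcal{Y}(\mathcal{O}_L)\to X_L(L)$ is a bijection, and it is $G$-equivariant since the $G$-action on $\mathcal{Y}$ extends the Galois action on $X_L$; passing to $G$-invariants yields $\mathcal{Y}(\mathcal{O}_L)^G\xrightarrow{\sim}X_L(L)^G$. Finally $X_L(L)^G=X(L)^G=X(K)$ by Galois descent for points of the $K$-scheme $X$. Chaining the bijections gives $\mathcal{X}(\mathcal{O}_K)\xrightarrow{\sim}X(K)$, and tracing an element through shows that this is the structural map of the model $\mathcal{X}$; by the first paragraph $\mathcal{Z}(\mathcal{O}_K)\to X(K)$ is then a bijection, so $\mathcal{Z}$ is a weak N\'eron model of $X$. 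The step I expect to require the most care is the identification $\mathcal{X}(\mathcal{O}_K)=\mathcal{Y}(\mathcal{O}_L)^G$: one must bookkeep the semilinear $G$-action on $\mathcal{Y}$, and --- since $\Spec \mathcal{O}_L\to\Spec \mathcal{O}_K$ is ramified, so that ordinary faithfully flat descent does not apply --- extract the lift of an $\mathcal{O}_K$-point of $\mathcal{X}$ from the valuative criterion for the finite, possibly non-flat, morphism $\pi$ rather than from descent along the base.
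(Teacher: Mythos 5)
Your proposal addresses only the final sentence of the theorem --- the assertion that $\mathcal{Z}$ is a weak N\'eron model of $X$ when $\mathcal{Y}$ is one of $X_L$. You explicitly take the remaining parts (construction of $\mathcal{Z}$, its representability and smoothness, separatedness of $\Phi$, the bijection $\mathcal{Z}(\mathcal{O}_K)\to\mathcal{X}(\mathcal{O}_K)$, the universal property, and the uniqueness of $\mathcal{Z}$) as ``already established,'' but these constitute the bulk of the theorem and nearly all of the paper's proof: the definition of $\mathcal{Z}$ as the fixed locus of a twisted $G$-action on the Weil restriction $\Res_{T/S}(\mathcal{Y})$, the gluing argument needed for representability because $\mathcal{Y}$ is not assumed quasi-projective, the identification $\mathcal{Y}\cong\mathcal{X}_T^{\,n}$ via normalization and the universal property derived from it, and so on. None of that is touched in the proposal, so as a proof of the stated theorem it has a substantial gap.

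For the part you do prove, the argument is correct. In fact it agrees in essence with the paper's argument, which shows directly that $\mathcal{Z}(S)=\Hom_T(T,\mathcal{Y})^G\to\mathcal{Z}_K(K)=\Hom_L(\Spec L,X_L)^G\cong X(K)$ is a bijection: injectivity from separatedness of $\mathcal{Y}$, and surjectivity from the weak-N\'eron bijection $\mathcal{Y}(T)\cong X_L(L)$ plus separatedness of $\mathcal{Y}$ to conclude that the extended section is $G$-invariant. Your Galois-descent step $\mathcal{Y}(\mathcal{O}_L)^G\cong X_L(L)^G=X(K)$ is exactly this, phrased as passing to $G$-invariants of a $G$-equivariant bijection. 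The one genuinely different ingredient you add --- the independent identification $\mathcal{X}(\mathcal{O}_K)=\mathcal{Y}(\mathcal{O}_L)^G$ via the valuative criterion of properness for the finite morphism $\pi:\mathcal{Y}\to\mathcal{X}$, checking that the $G$-translates of a lift are again lifts so that uniqueness forces invariance --- is a valid and instructive self-contained proof, but it is also redundant given what you assume: by construction $\mathcal{Z}(\mathcal{O}_K)=\Hom_T(T,\mathcal{Y})^G=\mathcal{Y}(\mathcal{O}_L)^G$, and you already have the established bijection $\mathcal{Z}(\mathcal{O}_K)\xrightarrow{\sim}\mathcal{X}(\mathcal{O}_K)$, so the identification follows immediately without any recourse to the valuative criterion. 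Incidentally, the remark that ``ordinary faithfully flat descent does not apply'' because $T\to S$ is ramified is a bit misplaced: $T\to S$ is finite flat and faithfully flat descent does apply along it; what fails is \emph{\'etale} (Galois) descent, and more to the point the map $\pi$ along which you are lifting need not be flat at all --- so the valuative criterion is indeed the right tool here, just for a slightly different reason than stated.
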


In fact, $\mathcal{Z}$ is the fixed locus of some $G$-action on the Weil restriction of $\mathcal{Y}$ to $S$, see Construction~\ref{construction Z}.
The construction goes back to \cite{ MR1149171}, where it is used in the context of abelian varieties and N\'eron models.

Note that the uniqueness of $\mathcal{Z}$ with its properties is interesting, because in general a weak N\'eron model is, in contrast to a N\'eron model, not unique.

The theorem and its proof also yield an explicit description of a weak N\'eron model $\mathcal{Z}$ of $X$.
Having this description at hand, we can examine its special fiber $\mathcal{Z}_k$ which is important for finding $K$-rational points of $X$.
We show the following key lemma.

\begin{lem*}\emph{(Lemma \ref{main lemma})}
Let $\mathcal{Y}^G$ be the fixed locus of the $G$-action on $\mathcal{Y}$.
Then there is a $k$-morphism
$b: \mathcal{Z}_k\to \mathcal{Y}^G$
such that for any point $y\in \mathcal{Y}^G$ with residue field $\kappa(y)$ 
the inverse image of $y$ is isomorphic to $\mathbb{A}^{m}_{\kappa(y)}$ as $\kappa(y)$-schemes for some $m\in \mathbb{N}$.
\end{lem*}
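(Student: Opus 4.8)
The plan is to read off $\mathcal{Z}_k$ from the description of $\mathcal{Z}$ in Construction~\ref{construction Z} and then to analyse the morphism $b$ via the deformation theory of the associated Greenberg (jet) tower. Recall that $\mathcal{Z}=\mathcal{R}^G$ is the $G$-fixed locus of the Weil restriction $\mathcal{R}:=\Res_{\mathcal{O}_L/\mathcal{O}_K}(\mathcal{Y})$, which represents $A\mapsto\mathcal{Y}(A\otimes_{\mathcal{O}_K}\mathcal{O}_L)$ on $\mathcal{O}_K$-algebras $A$. Since $L/K$ is tame and $k$ is algebraically closed it is totally and tamely ramified; put $e:=[L:K]=|G|$, so that $\Char(k)\nmid e$ and $G$ is cyclic (hence linearly reductive over $k$). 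Fix a uniformizer $\pi_L$ of $\mathcal{O}_L$ and let $\chi:G\to k^{\times}$ be the injective character $\sigma\mapsto(\sigma(\pi_L)/\pi_L\bmod\pi_L)$. Then $k\otimes_{\mathcal{O}_K}\mathcal{O}_L\cong\mathcal{O}_L/\pi_K\mathcal{O}_L\cong k[t]/(t^{e})=:R_e$ with $t$ the image of $\pi_L$, so $\mathcal{R}_k$ represents $A\mapsto\mathcal{Y}_e(A\otimes_k R_e)$ where $\mathcal{Y}_e:=\mathcal{Y}\times_{\mathcal{O}_L}R_e$ is smooth over $R_e$; that is, $\mathcal{R}_k=\mathrm{Gr}_e(\mathcal{Y}_e)\,(=\Res_{R_e/k}(\mathcal{Y}_e))$ is the level-$e$ Greenberg scheme, with $G$ acting through its $k$-linear action $\sigma\cdot t=\chi(\sigma)\,t$ on $R_e$ together with its given action on $\mathcal{Y}_e$. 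As forming $G$-fixed points commutes with the base change $\Spec k\to S$, this identifies $\mathcal{Z}_k$ with $\mathrm{Gr}_e(\mathcal{Y}_e)^G$.

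Next I would construct $b$. The truncation morphism $\mathrm{Gr}_e(\mathcal{Y}_e)\to\mathrm{Gr}_1(\mathcal{Y}_e)=\mathcal{Y}_e\times_{R_e}k=\mathcal{Y}_k$ is $G$-equivariant, and $\mathcal{Y}^G=(\mathcal{Y}_k)^G$: any $G$-invariant $\mathcal{O}_K$-morphism $\Spec A\to\mathcal{Y}$ has structure map $\mathcal{O}_L\to A$ annihilating every $\sigma(\pi_L)-\pi_L$, and since $\sigma(\pi_L)-\pi_L$ equals $\pi_L$ times a unit for $\sigma\neq1$ (because $\chi(\sigma)\neq1$, using $\Char(k)\nmid e$), this map factors through $\mathcal{O}_L/\pi_L=k$; hence $\mathcal{Y}^G$ is supported on the special fiber and equals the fixed locus of the induced $k$-linear $G$-action on $\mathcal{Y}_k$. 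Restricting the truncation to $G$-fixed loci produces the desired $k$-morphism $b:\mathcal{Z}_k\to\mathcal{Y}^G$.

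Finally I would compute the fibers. Fix $y\in\mathcal{Y}^G$, set $\kappa:=\kappa(y)$, and let $F$ be the fiber of $\mathrm{Gr}_e(\mathcal{Y}_e)\to\mathcal{Y}_k$ over $y$; since the truncation is $G$-equivariant and $y$ is $G$-fixed, $b^{-1}(y)=F^G$. By the standard deformation theory of the smooth $R_e$-scheme $\mathcal{Y}_e$, the truncation tower exhibits $F$ as an iterated torsor $F=F_e\to F_{e-1}\to\cdots\to F_1=\Spec\kappa$ in which, for $2\le j\le e$, the layer $F_j\to F_{j-1}$ is a torsor under the pullback to $F_{j-1}$ of $T_y(\mathcal{Y}_k)\otimes_\kappa\left(t^{j-1}R_e/t^{j}R_e\right)$; as $F_{j-1}$ lies over the single point $y$, this is the \emph{constant} vector bundle with fibre the finite-dimensional $\kappa$-space $T_y(\mathcal{Y}_k)$, on which $G$ acts linearly through its given representation on $T_y(\mathcal{Y}_k)$ twisted by $\chi^{j-1}$. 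I would then prove by induction on $j$ that $F_j$ is $G$-equivariantly isomorphic to an affine space over $\kappa$ carrying a linear $G$-action. Granting this for $F_{j-1}$: the class of the torsor $F_j\to F_{j-1}$ lies in $H^{1}(F_{j-1},\mathcal{O}_{F_{j-1}}\otimes_\kappa T_y(\mathcal{Y}_k))$, which vanishes since $F_{j-1}$ is an affine space, so a section exists; the set of sections is then a torsor in the category of $G$-sets under the $\kappa[G]$-module $\Gamma(F_{j-1},\mathcal{O}_{F_{j-1}}\otimes_\kappa T_y(\mathcal{Y}_k))$, hence contains a $G$-fixed element, because $|G|=e$ is invertible in $\kappa$ and so $H^{1}(G,-)$ vanishes on $\kappa$-vector spaces; such a $G$-equivariant section splits $F_j\cong F_{j-1}\times V_j$ $G$-equivariantly, where $V_j\cong T_y(\mathcal{Y}_k)$ with $G$ acting by the $\chi^{j-1}$-twist. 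Therefore $F\cong\prod_{j=2}^{e}V_j$ with a linear diagonal $G$-action, so $F^G=\prod_{j=2}^{e}V_j^{G}$ is a $\kappa$-linear subspace, and $b^{-1}(y)=F^G\cong\mathbb{A}^m_{\kappa}$ for a suitable $m\in\mathbb{N}$ (the total dimension of the nontrivial $G$-weight spaces of $T_y(\mathcal{Y}_k)$).

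The step I expect to be the main obstacle is the equivariance bootstrap in the last paragraph: each layer $F_j\to F_{j-1}$ is individually a torsor under a linear $G$-representation, but one must propagate a \emph{linear} $G$-structure up the entire tower — which is exactly where tameness is indispensable, since $e$ being invertible in $k$ makes $G$ linearly reductive over $k$, used both to split each layer $G$-equivariantly and to keep the fixed loci in sight nonsingular. A secondary technical point to pin down carefully is the precise $G$-equivariant structure on the tangent-bundle torsors in the Greenberg tower, namely that the $j$-th layer is $T_y(\mathcal{Y}_k)$ twisted by $\chi^{j-1}$; this follows by unwinding the functor $A\mapsto\mathcal{Y}_e(A\otimes_k R_e)$ together with the obstruction theory for lifting along the square-zero $G$-equivariant extensions $A\otimes_k\bigl(R_e/t^{j}R_e\bigr)\twoheadrightarrow A\otimes_k\bigl(R_e/t^{j-1}R_e\bigr)$.
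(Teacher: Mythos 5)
Your proof is correct, but takes a genuinely different route from the paper's. The paper's argument is entirely explicit and ring-theoretic: it shows that an element of $b^{-1}(y)(W)$ factors first through $\Spec(\mathcal{O}_{\mathcal{Y},y})$ and then through $\Spec(\mathcal{O}_{\mathcal{Y},y}\otimes_{\mathcal{O}_{\mathcal{Y},y}^G}\kappa(y))$, and invokes Lemma~\ref{ap1} and Lemma~\ref{ap2} to exhibit the latter ring as $\kappa(y)[x_0,\dots,x_m]/\mathfrak{I}$ with $G$ acting diagonally by powers of a primitive root of unity, from which it reads off a closed-form parametrization $\tilde{a}\mapsto(a_1,\dots,a_m)$ of the fiber functor. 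You instead reinterpret $\mathcal{Z}_k$ as the $G$-fixed locus of the level-$e$ Greenberg (Weil-restriction) scheme of the smooth $R_e$-scheme $\mathcal{Y}_e$, use the fact that the truncation tower over the fixed point $y$ is an iterated $G$-equivariant torsor under constant character twists of $T_y(\mathcal{Y}_k)$, and propagate a $G$-equivariant linear structure up the tower using the vanishing of coherent $H^1$ on affine space together with the vanishing of $H^1(G,-)$ on $\kappa(y)$-vector spaces — tameness enters precisely here. What this buys you is that you bypass the explicit diagonalization of the $G$-action on a regular system of parameters (the content of the appendix Lemma~\ref{ap1}), trading it for linear reductivity of $G$; what it costs is reliance on the $G$-equivariant torsor structure of the Greenberg tower, which the paper keeps internal and checks by hand. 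Two points you would need to nail down in a clean writeup (and which the paper's appendix is essentially there to handle): the identification $k\otimes_{\mathcal{O}_K}\mathcal{O}_L\cong k[t]/(t^e)$ with $\sigma\cdot t=\chi(\sigma)t$ already requires a $G$-fixed coefficient field and a semi-invariant uniformizer, i.e.\ Lemma~\ref{ap1} applied to $\mathcal{O}_L$; and the equality $\mathcal{Y}^G=(\mathcal{Y}_k)^G$ should be asserted at the level of functors, not just underlying sets (as you sketch via $T^G=\Spec k$), since $\mathcal{Y}_k(W)\subsetneq\mathcal{Y}(W)$ for general $k$-schemes $W$.
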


To show this lemma we use the explicit description of $\mathcal{Z}$ and of the $G$-action on the complete local ring of a fixed point,
which is examined in Lemma~\ref{ap1} and Lemma~\ref{ap2}.

There are some interesting applications of the key lemma.
For example we deduce from it\ that the quotient $\mathcal{X}$ has $\mathcal{O}_K$-points if and only if $\mathcal{Y}^ G\neq \emptyset$, see Corollary \ref{sections of the quotient}.
In fact these $\mathcal{O}_K$-point will pass through the image of $\mathcal{Y}^ G$ in $\mathcal{X}$, which in general will be singular.
Hence we obtain examples of singular models with section through the singular locus.

We can use the obtained results also to study certain motivic invariants, the motivic Serre invariant and the rational volume.
The motivic Serre invariant $S(X)$ of a $K$-variety $X$
is defined to be the class of the special fiber of a weak N\'eron model of $X$ in some quotient of the Grothendieck ring of varieties, namely in $K_ 0^{\mathcal{O}_K}(\Var_k)/(\mathbb{L}-1)$, see Definition \ref{def serre invariant}.
The Serre invariant is interesting in the context of rational points, because it vanishes if $X$ has no $K$-rational point.
From the key lemma we deduce the following theorem.

\begin{thm*} \emph{(Theorem \ref{tolle idee})}
Let $X$ be a smooth, proper $K$-variety.
Let $L/K$ be a tame Galois extension, $X_L$ the base change of $X$ to $L$.
Let $\mathcal{Y}$ be a weak N\'eron model of ${X}_L$ with a good
$G\!:=\Gal(L/K)$-action extending the Galois action on $X_L$. Then
\[
S(X)=[\mathcal{Y}^G]\in K_0^{\mathcal{O}_K}(\Var_k)/(\mathbb{L}-1).
\]
\end{thm*}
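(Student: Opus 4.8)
The plan is to combine the two results already at our disposal, Theorem~\ref{main theorem} and the key Lemma~\ref{main lemma}, with routine manipulations in the Grothendieck ring. First I would feed the data $(L/K,\,X,\,X_L,\,\mathcal{Y})$ into Theorem~\ref{main theorem}: since $X$ is smooth, $L/K$ is tame, and $\mathcal{Y}$ is a weak N\'eron model of $X_L$ carrying a $G$-action extending the Galois action, Construction~\ref{construction Z} yields a weak N\'eron model $\mathcal{Z}$ of $X$. As $X$ is moreover smooth and proper it admits weak N\'eron models, so its motivic Serre invariant is defined, and by Definition~\ref{def serre invariant} together with the independence of $S(X)$ from the chosen weak N\'eron model we obtain $S(X)=[\mathcal{Z}_k]$ in $K_0^{\mathcal{O}_K}(\Var_k)/(\mathbb{L}-1)$. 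The theorem is therefore reduced to the identity $[\mathcal{Z}_k]=[\mathcal{Y}^G]$ in that ring.

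For this I would invoke Lemma~\ref{main lemma}, which provides a surjective $k$-morphism $b\colon \mathcal{Z}_k\to\mathcal{Y}^G$ whose fibre over every point $y$ is an affine space $\mathbb{A}^{m}_{\kappa(y)}$. (Note that $\mathcal{Y}^G$ is genuinely a $k$-variety: since $L/K$ is totally tamely ramified with residue field $k$, the fixed locus of $G$ on $\Spec\mathcal{O}_L$ is the reduced closed point, so $\mathcal{Y}^G$ is supported in the special fibre of $\mathcal{Y}$.) By constructibility of fibre dimension, $\mathcal{Y}^G$ decomposes into finitely many locally closed subsets $W_m$ over which the fibre of $b$ has constant dimension $m$; using the scissor relations it then suffices to show $[b^{-1}(W_m)]=\mathbb{L}^{m}[W_m]$ for each $m$. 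Here I would appeal to the explicit description of $b$ coming from the Weil-restriction construction of $\mathcal{Z}$ and from the analysis of the $G$-action on the complete local ring of a fixed point in Lemmas~\ref{ap1} and~\ref{ap2}: this description exhibits $b^{-1}(W_m)\to W_m$, possibly after a further finite stratification, as a Zariski-locally trivial affine bundle; alternatively one may cite the standard fact that a morphism of $k$-varieties all of whose fibres are affine spaces is a piecewise trivial fibration. Either way one gets $[\mathcal{Z}_k]=\sum_m\mathbb{L}^{m}[W_m]$ in $K_0^{\mathcal{O}_K}(\Var_k)$.

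Passing to the quotient by $(\mathbb{L}-1)$ and using $\mathbb{L}^{m}\equiv 1$, this sum collapses to $\sum_m[W_m]=[\mathcal{Y}^G]$, so $S(X)=[\mathcal{Z}_k]=[\mathcal{Y}^G]$ in $K_0^{\mathcal{O}_K}(\Var_k)/(\mathbb{L}-1)$, as claimed.

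The step I expect to be the real obstacle is the middle one: upgrading the \emph{pointwise} assertion of Lemma~\ref{main lemma} --- each fibre of $b$ is an affine space --- to an honest identity in the Grothendieck ring. Concretely one needs enough (piecewise) Zariski-local triviality of $b$ to apply the relation $[\text{affine bundle}]=\mathbb{L}^{m}[\text{base}]$, and securing this requires either extracting the linear structure of $b$ from Construction~\ref{construction Z} and Lemmas~\ref{ap1}--\ref{ap2}, or citing a general triviality lemma for morphisms with affine-space fibres. Everything else --- the application of Theorem~\ref{main theorem}, the well-definedness of $S(X)$, and the final reduction modulo $(\mathbb{L}-1)$ --- is routine.
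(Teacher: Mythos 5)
Your proposal follows the same route as the paper: apply Theorem~\ref{main theorem} to get $\mathcal{Z}$, reduce to $[\mathcal{Z}_k]=[\mathcal{Y}^G]$, and use the affine-space fibration $b$ from Lemma~\ref{main lemma} together with the scissor relations and $\mathbb{L}\equiv 1$. You also correctly flag the only nontrivial point, namely converting the pointwise statement of Lemma~\ref{main lemma} into a piecewise-trivial fibration.

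Where the paper is a bit more economical is in how it closes that gap. It does not first stratify by fibre dimension; instead it uses a noetherian induction starting from generic points. Concretely: $\mathcal{Y}^G$ is smooth over $k=T^G$ by \cite[Proposition 3.5]{MR1149171}, hence reduced, so one may pass to an irreducible component and take its generic point $\eta$. Lemma~\ref{main lemma} gives an isomorphism $\beta\colon b^{-1}(\eta)\xrightarrow{\ \sim\ }\mathbb{A}^{m}_{\kappa(\eta)}$. Since $\beta$ and its inverse are defined by finitely many rational functions on $\mathcal{Y}^G$, they spread out to an isomorphism $b^{-1}(U_1)\cong\mathbb{A}^{m}_{U_1}$ over a dense open $U_1$; then repeat on the closed complement. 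This is exactly your alternative (a), and it is the one you should rely on: the ``standard fact'' you float in alternative (b) --- that a morphism all of whose fibres are affine spaces is automatically piecewise trivial --- is not a black box to cite, and proving it amounts to running precisely this spreading-out argument once you know the fibre over each \emph{scheme-theoretic} point (not just closed points) is genuinely $\mathbb{A}^{m}_{\kappa(y)}$, which is what Lemma~\ref{main lemma} supplies. With that caveat, your proof and the paper's coincide, and the final collapse modulo $(\mathbb{L}-1)$ is as you describe.

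One tiny bookkeeping point you handle in passing and should make explicit: $\mathcal{Y}^G$ is a closed subscheme of $\mathcal{Y}_k$, hence a $k$-scheme of finite type, because any $G$-equivariant $T$-point of $\mathcal{Y}$ composed with $\varphi$ lands in $T^G=\Spec(k)$; this is shown at the start of the proof of Lemma~\ref{main lemma} and is what makes $[\mathcal{Y}^G]$ a well-defined class in $K_0^{\mathcal{O}_K}(\Var_k)$.
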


The rational volume $s(X)$ of a $K$-variety $X$
is defined to be the Euler characteristic with proper support and coefficients in $\mathbb{Q}_l$, $l\neq \Char (k)$ a prime, of the special fiber of a weak N\'eron model of $X$.
The rational volume vanishes if $X$ has no $K$-rational point, too.

\begin{thm*} \emph{(Theorem \ref{rational volume})}
Let $X$ be a smooth, proper $K$-variety, and let $L/K$ be a tame Galois extension of degree $q^r$, $q$ a prime.
Then $s(X)=s(X_L) \mod q$.
\end{thm*}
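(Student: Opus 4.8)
The plan is to push everything down to the residue field via the motivic Serre invariant, and then to establish a congruence between two Euler characteristics attached to the action of the $q$-group $G:=\Gal(L/K)$. We may assume $r\geq 1$, so that $L\neq K$; the case $r=0$ is trivial. By Theorem~\ref{gaction on wnm} we fix a weak N\'eron model $\mathcal{Y}$ of $X_L$ carrying a \emph{good} $G$-action extending the Galois action on $X_L$; note that $|G|=q^r$ and that $q\neq\Char k$ because $L/K$ is tame. By definition of the rational volume, $s(X_L)=\chi_c(\mathcal{Y}_k)$. On the other hand, the Euler characteristic with compact support induces a ring homomorphism $\chi_c$ on $K_0^{\mathcal{O}_K}(\Var_k)/(\mathbb{L}-1)$ with values in $\mathbb{Z}$, since $\chi_c(\mathbb{A}^1_k)=1$; applying it to the identity $S(X)=[\mathcal{Y}^G]$ of Theorem~\ref{tolle idee}, and using that $S(X)=[\mathcal{Z}_k]$ for a weak N\'eron model $\mathcal{Z}$ of $X$, we obtain $s(X)=\chi_c(\mathcal{Y}^G)$. (Alternatively, this follows directly from Lemma~\ref{main lemma}: the fibres of $b\colon\mathcal{Z}_k\to\mathcal{Y}^G$ are affine spaces, so stratifying $\mathcal{Y}^G$ by the fibre dimension and using additivity and multiplicativity of $\chi_c$ together with $\chi_c(\mathbb{A}^m_\kappa)=1$ gives $\chi_c(\mathcal{Z}_k)=\chi_c(\mathcal{Y}^G)$.) Thus it is enough to prove $\chi_c(\mathcal{Y}^G)\equiv\chi_c(\mathcal{Y}_k)\pmod q$.

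Next I would identify $\mathcal{Y}^G$ with the fixed locus of $G$ on the special fibre. Since $L\neq K$, the Galois action on the generic fibre $X_L$ has no fixed points, so the closed subscheme $\mathcal{Y}^G$ of the separated $S$-scheme $\mathcal{Y}$ is supported on $\mathcal{Y}_k$; and since the formation of fixed loci of a finite group acting on a separated scheme commutes with arbitrary base change, $(\mathcal{Y}^G)_k=(\mathcal{Y}_k)^G$. Writing $Y:=\mathcal{Y}_k$, the assertion becomes: for the good action of the $q$-group $G$ on the $k$-variety $Y$ one has $\chi_c(Y^G)\equiv\chi_c(Y)\pmod q$.

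I would prove this congruence by induction on $r$, the case $r=0$ being trivial. For $r\geq 1$ choose a central subgroup $H\leq G$ with $|H|=q$; it is normal, the quotient $G/H$ acts on $Y^H$ with $(Y^H)^{G/H}=Y^G$ and has order $q^{r-1}$, so by induction $\chi_c(Y^G)\equiv\chi_c(Y^H)\pmod q$ and we reduce to $G=H\cong\mathbb{Z}/q$. In that case additivity of $\chi_c$ gives $\chi_c(Y)=\chi_c(Y^H)+\chi_c(U)$ with $U:=Y\setminus Y^H$; on $U$ the group $\mathbb{Z}/q$ acts freely, since a nontrivial stabiliser would be all of $H$; the quotient $U/H$ exists because the action is good; and $U\to U/H$ is a finite \'etale $H$-torsor, whence $\chi_c(U)=q\cdot\chi_c(U/H)$ and $\chi_c(Y)\equiv\chi_c(Y^H)\pmod q$. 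This closes the induction and gives $s(X)=\chi_c(\mathcal{Y}^G)\equiv\chi_c(\mathcal{Y}_k)=s(X_L)\pmod q$.

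The main obstacle is the equality $\chi_c(U)=q\cdot\chi_c(U/H)$ for the degree-$q$ \'etale cover $U\to U/H$ when $\Char k>0$. Over a field of characteristic $0$ this is just the multiplicativity of the topological Euler characteristic in finite coverings, via the comparison with singular cohomology; in general it expresses the fact that the Euler characteristic with compact support of a lisse $\overline{\mathbb{Q}}_l$-sheaf which is tamely ramified at the boundary equals its rank times $\chi_c$ of the base, which applies here precisely because the torsor is tame ($q\neq\Char k$), and I would invoke the corresponding Grothendieck--Ogg--Shafarevich-type results. Minor points to secure are that Theorem~\ref{gaction on wnm} can indeed be arranged to produce a \emph{good} $G$-action, so that Theorem~\ref{tolle idee} and the quotients $U/H$ are available, and the base-change compatibility of fixed loci invoked in the second step.
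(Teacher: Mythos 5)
Your argument follows the same skeleton as the paper's: invoke Theorem~\ref{gaction on wnm} to produce a weak N\'eron model $\mathcal{Y}$ of $X_L$ with a good $G$-action, apply Theorem~\ref{tolle idee} to get $S(X)=[\mathcal{Y}^G]$, observe $\mathcal{Y}^G=\mathcal{Y}_k^G$, and reduce to the congruence $\chi_c(\mathcal{Y}_k)\equiv\chi_c(\mathcal{Y}_k^G)\pmod q$ for the good action of the $q$-group $G$ on the $k$-variety $\mathcal{Y}_k$. Your alternative derivation of $s(X)=\chi_c(\mathcal{Y}^G)$ directly from the affine-space fibration of Lemma~\ref{main lemma} is also sound. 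The one genuine difference is that the paper simply cites \cite[Proposition~5.4]{1009.1281} (itself based on \cite[Section~7.2]{MR2555994}) for the $q$-group congruence, whereas you supply a proof: induct on $|G|=q^r$ via a central subgroup $H\cong\mathbb{Z}/q$, stratify $Y=Y^H\sqcup U$, and use that $U\to U/H$ is finite \'etale of degree $q$. This is correct and worth spelling out.

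The one place you should sharpen is the step $\chi_c(U)\equiv 0\pmod q$ for a free $\mathbb{Z}/q$-action. You frame it as multiplicativity of $\chi_c$ along a tame \'etale $q$-cover and gesture at Grothendieck--Ogg--Shafarevich. That is the right idea in spirit but the wrong reference: GOS is a curve statement, and in higher dimension ``$\chi_c$ of a lisse sheaf equals rank times $\chi_c$ of the base'' is false in general in characteristic $p$. The clean justification is the Lefschetz--Verdier fixed point formula for finite-order automorphisms in $\ell$-adic cohomology (SGA~5 / Deligne--Lusztig): for each nontrivial $g\in H$ acting freely on $U$, the Lefschetz number $\sum_i(-1)^i\operatorname{tr}(g\mid H^i_c(U,\mathbb{Q}_l))$ vanishes, so $\sum_i(-1)^i[H^i_c(U,\mathbb{Q}_l)]$ is an integer multiple of the regular representation of $H$ in $R(H)\otimes\mathbb{Q}_l$, and hence $\chi_c(U)\in q\mathbb{Z}$. (In particular you do not need the full equality $\chi_c(U)=q\,\chi_c(U/H)$, only divisibility, which is exactly what the fixed-point formula delivers.) With that citation in place, your proof is complete and is a legitimate expansion of the lemma the paper treats as a black box.
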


The proof of this theorem uses the fact that there is always a weak N\'eron model of $X_L$ with an action of $\Gal(L/K)$ extending the Galois action on $X_L$, see Theorem~\ref{gaction on wnm}, as well as the equation for the Serre invariant (Theorem~\ref{tolle idee}).
Moreover, we use the fact that for a scheme of finite type $V$ over some field with a good action of a $q$-group $G$, we have $\chi_c(V)=\chi_c(V^G)\mod q$.
This argument goes back to \cite[Section 7.2]{MR2555994}.\\
Finally, we can deduce the existence of rational points for some varieties with potential good reduction.
By definition, a $K$-variety $X$ has potential good reduction if there is a Galois extension $L/K$ such that
the base change of $X$ to $L$ admits a smooth and proper model.

\begin{cor*} \emph{(Corollary \ref{korollar rational volume})}
Let $X$ be a smooth, proper $K$-variety with potential good reduction after a base change of order $q^r$,
$q\neq \Char(k)$ a prime.
If the Euler characteristic of $X$ with coefficients in $\mathbb{Q}_l$, $l\neq \Char(k)$ a prime, does not vanish modulo $q$,
then $X$ has a $K$-rational point.
\end{cor*}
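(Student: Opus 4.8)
The plan is to combine Theorem~\ref{rational volume} with the fact recorded above that the rational volume of a smooth, proper $K$-variety vanishes when there is no $K$-rational point, and to evaluate $s(X_L)$ using the good model. First, since $X$ is smooth and proper it has a weak N\'eron model and $s(X)$ is defined. By hypothesis there is a Galois extension $L/K$ with $[L:K]=q^r$ such that $X_L$ has a smooth and proper model $\mathcal{Y}$ over $\mathcal{O}_L$; as $q\neq\Char(k)$ and $k$ is algebraically closed, $L/K$ is a tame Galois extension, so Theorem~\ref{rational volume} applies and gives $s(X)\equiv s(X_L)\bmod q$. It therefore suffices to show $s(X_L)=\chi(X)$, where $\chi(X)$ is the $\mathbb{Q}_l$-adic Euler characteristic of $X$: then the hypothesis $\chi(X)\not\equiv 0 \bmod q$ forces $s(X)\neq 0$, and since $s(X)=0$ whenever $X(K)=\emptyset$, we conclude $X(K)\neq\emptyset$.

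It remains to compute $s(X_L)$. Since $\mathcal{Y}$ is smooth, separated and proper over $\mathcal{O}_L$, properness gives a bijection $\mathcal{Y}(\mathcal{O}_L)\to X_L(L)$, so $\mathcal{Y}$ is in particular a weak N\'eron model of $X_L$, and $s(X_L)=\chi_c(\mathcal{Y}_k)$, where $\mathcal{Y}_k$ is the special fiber over the residue field $k$ of $\mathcal{O}_L$ (which is again $k$, as $k$ is algebraically closed). The fiber $\mathcal{Y}_k$ is smooth and proper over $k$, hence $\chi_c(\mathcal{Y}_k)=\chi(\mathcal{Y}_k)$. Applying the smooth and proper base change theorem to $f\colon\mathcal{Y}\to\Spec(\mathcal{O}_L)$, the sheaves $R^i f_*\mathbb{Q}_l$ are lisse, so their stalks at the special and the generic geometric point of $\Spec(\mathcal{O}_L)$ agree; comparing these stalks gives $H^i(\mathcal{Y}_k,\mathbb{Q}_l)\cong H^i(X_{\bar K},\mathbb{Q}_l)$ for all $i$, using $\mathcal{Y}_{\bar L}\cong (X_L)_{\bar L}\cong X_{\bar K}$ and $k=\bar k$. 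Taking alternating sums of dimensions yields $\chi_c(\mathcal{Y}_k)=\chi(X)$, so $s(X_L)=\chi(X)$.

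Putting this together, $s(X)\equiv s(X_L)=\chi(X)\not\equiv 0 \bmod q$, hence $s(X)\neq 0$; since the rational volume of $X$ vanishes if $X$ has no $K$-rational point, $X$ must have one. The genuine content is Theorem~\ref{rational volume}, already proved; everything else is an unwinding of definitions together with the smooth and proper base change theorem, so I do not expect a serious obstacle. The one point that deserves care is the cohomological identification $H^i(\mathcal{Y}_k,\mathbb{Q}_l)\cong H^i(X_{\bar K},\mathbb{Q}_l)$, i.e.\ that potential good reduction makes the rational volume of $X_L$ equal to the $\ell$-adic Euler characteristic of the generic fiber.
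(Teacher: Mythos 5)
Your proposal is correct and follows essentially the same route as the paper: reduce to Theorem~\ref{rational volume}, observe that a smooth proper model of $X_L$ is a weak N\'eron model so that $s(X_L)=\chi_c(\mathcal{Y}_k)=\chi(\mathcal{Y}_k)$, identify $H^i(\mathcal{Y}_k,\mathbb{Q}_l)$ with $H^i(X_{K^s},\mathbb{Q}_l)$ via the smooth and proper base change theorem, and conclude via the vanishing of $s(X)$ when $X(K)=\emptyset$. The paper phrases the cohomological comparison with $\mathbb{Z}/n\mathbb{Z}$ coefficients before passing to $\mathbb{Q}_l$ and is slightly more explicit that $L^s=K^s$, but these are presentational differences only.
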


To prove this corollary we use Theorem \ref{rational volume}, and the fact that the Euler characteristic with coefficients in $\mathbb{Q}_l$ is constant on the fibers of a smooth and proper morphism.\\
In addition, we obtain a similar result for the Euler characteristic with coefficients in the structure sheaf,
see Corollary \ref{Korollar}.
In this Corollary we need to assume that
there is a tame Galois extension $L/K$ of prime degree,
such that there is a smooth and proper model of $X_L$ with a good $G$-action extending the Galois action on $X_L$, 
because we cannot use the results concerning the motivic invariants.
We show directly that the $G$-action on this smooth and proper model of $X_L$ has a closed fixed point, and
use  Corollary \ref{sections of the quotient} to conclude that in this case the model obtained by taking the quotient will have an $\mathcal{O}_K$-point inducing a $K$-point of $X$.

\subsubsection*{Acknowledgements}

The results contained in this article are part of the author's dissertation, written under the supervision of  H\'el\`{e}ne Esnault.
My thesis was supported by the SFB/TR45 "Periods, moduli spaces and arithmetic of algebraic varieties" of the DFG (German Research Foundation).
I would like to thank the members of the "Essener Seminar f\"ur Algebraische Geometrie und Arithmetik'' for their support.
In particular, I am very thankful to Andre Chatzistamatiou for the numerous discussions we had, and for all the suggestions he made. 
I thank Johannes Nicaise and Olivier Wittenberg for reading my thesis and for helping me with important remarks.
I am very grateful to H\'el\`{e}ne Esnault for the time and ideas she gave me and for her constant support.

\subsubsection*{Conventions}

A variety over a field $F$ is a geometrically integral, separated $F$-scheme of finite type over $F$.
We assume that an integral scheme is connected.
All schemes are assumed to be noetherian.\\
If $U$ is a $V$-scheme, $\Spec(F)\to V$ any point. We set $U_F:=U\times_V\Spec(F)$.

\medskip

In the entire article, let 
K be a complete local field with ring of integers $\mathcal{O}_K$, $S\!:=\Spec(\mathcal{O}_K)$,
and residue field $k$.
Assume that $k$ is algebraically closed.

\section{Models with Galois Actions}
\begin{defn}
Let $X$ be a $K$-variety.
A \emph{model} of $X$ is an integral $S$-scheme $\mathcal{X}$ of finite type over $S$ such that $\mathcal{X}_K\cong X$.
\end{defn}

\begin{rem}\label{flach}
Let $X$ be a non-empty $K$-variety, and let $\mathcal{X}\to S$ be any model of $X$.
Then $\mathcal{X}$ dominates $S$, so by \cite[Chapter III, Proposition 9.7]{MR0463157} $\mathcal{X}$ is flat over $S$.
\end{rem}

\begin{rem}\label{points and models}
Let $\varphi: \mathcal{X}\to S$ be a model of a $K$-variety $X$.
Then we have maps as follows induced by the universal property of the fiber product.
\[
 \xymatrix{
X(K) & \mathcal{X}(\mathcal{O}_K) \ar[r]^s \ar[l] & \mathcal{X}_k(k)
} 
\]
If $\varphi$ is proper, $\mathcal{X}(\mathcal{O}_K)\to X(K)$ is bijective by the valuative criterion of properness.
If $\varphi$ is smooth, the specialization map $s$ is surjective by \cite[Chapter~2.3, Proposition~5]{MR1045822}, because $\mathcal{O}_K$ is Henselian.
 \end{rem}

\begin{defn}
A \emph{weak N\'eron model} of a smooth $K$-variety $X$ is a smooth and separated model $\mathcal{X}\to S$ of $X$,
such that the natural map $\mathcal{X}(\mathcal{O}_K)\to X(K)$ is a bijection.
\end{defn}

\begin{rem}
Let $X$ be a smooth $K$-variety attached with a weak N\'eron model $\mathcal{X}\to S$.
Then $X(K)=\emptyset$ if and only if the special fiber $\mathcal{X}_k$ of $\mathcal{X}\to S$ is empty.
This is true, because by definition the natural map $\mathcal{X}(\mathcal{O}_K)\to X(K)$ is a bijection,
the specializing map $\mathcal{X}(\mathcal{O}_K)\to \mathcal{X}_k(k)$ is surjective by Remark \ref{points and models},
and $k$ is algebraically closed.
\end{rem}

\begin{rem}
A weak N\'eron model does not exist for all smooth $K$-varieties $X$.
It follows from \cite[Chapter 3.5, Theorem 2]{MR1045822} that a weak N\'eron model exists if $X$ is proper over $K$.

Note that a weak N\'eron model is not unique. Take any weak N\'eron model, blow up a point in the special fiber, and then take the smooth locus of the obtained scheme.
This is again a weak N\'eron model.
\end{rem}

Now fix a Galois extension $L/K$ with Galois group $G:=\Gal(L/K)$.
Let $\mathcal{O}_L$ be the ring of integers of $L$, $T\!:=\Spec(\mathcal{O}_L)$.
Note that $k$ is the residue field of $L$.
For a general introduction to local fields and their Galois extensions we refer to
\cite{MR554237}. 

A Galois extension $L/K$ is called  \emph{tame}, if the order of its Galois group is prime to $\Char(k)$.
From
\cite[Chapter IV, Corollary 2 and Corollary 4]{MR554237} we get that the Galois group of a tame Galois extension $L/K$ is always cyclic.

\medskip

We now want to consider group actions of the Galois group.
Therefore, recall the following facts concerning group actions of an abstract finite group $G$.

\medskip

Let  $U$ be a scheme, $\Aut(U)$ the abstract group of automorphisms of $U$.
A \emph{$G$-action} on $U$ is given by a group homomorphism $\mu_U:G\to \Aut(U)$.
If $U$ is an affine scheme, i.e.~$U=\Spec(A)$, then a group action on $U$ is also given by a group homomorphism ${\mu_U^\#:G\to \Aut(A)}$.

Let $U$, $V$ be schemes with $G$-actions.
We call a morphism of schemes $f : U \to V$ \emph{$G$-equivariant}, if for all $g\in G$ we have $f \circ \mu_U(g)=\mu_V(g)\circ f$.
\medskip

A $G$-action on a scheme $U$ is called \emph{good}, if every orbit is contained in an affine open subscheme of $U$.
By \cite[Expos\'e V, Proposition 1.8]{MR0217087} this is the same as requiring a cover of $U$ by affine, open, $G$-invariant subschemes.

If $U$ is a scheme with a good $G$-action,
then there exists a quotient $\pi: U\to U/G$ in the category of schemes, see \cite[Expos\'e V.1]{MR0217087}.

\begin{rem}\label{Galois action}
By definition of the Galois group, $G$ acts on $L$, and $K=L^{G}$.
The $G$-action of $L$ can be restricted to $\mathcal{O}_L$, and $\mathcal{O}_L^{G}=\mathcal{O}_K$.
We call this action the \emph{Galois action} on $\mathcal{O}_L$.
Note that $\Spec(L)\hookrightarrow T$ is ${G}$-equivariant for these actions.
Let $X$ be a $K$-variety. 
As $X$ is flat over $K$, by \cite[Expos\'e V, Proposition~1.9]{MR0217087}, $G$ acts on $X_L$
such that $X_L\to \Spec(L)$ is $G$-invariant and $X_L/G\cong X$.
We call this action the \emph{Galois action} on $X_L$.
\end{rem}

\begin{rem}
\label{XL to X}
Let $X$ be a $K$-variety.
Let $\varphi: \mathcal{Y}\to T$ be a model of $X_L$ with a good ${G}$-action.
Assume that  ${X_L\hookrightarrow \mathcal{Y}}$ is $G$-equivariant for the action on $\mathcal{Y}$ and the Galois action on $X_L$,
i.e.~the $G$-action on $\mathcal{Y}$ extends the Galois action on $X_L$.\\
Take any $h\in G$,
and let $g\in \Aut(\mathcal{Y})$ and $g_T\in \Aut(T)$ be its images.
As the maps $X_L\hookrightarrow \mathcal{Y}$, $X_L\to \Spec(L)$, and $\Spec(L)\hookrightarrow T$ are ${\Gal(L/K)}$-equivariant,
we obtain that $g_T \circ \varphi \circ g^{-1}\!\mid _{X_L}=\varphi\!\mid_{X_L}$.
As $X_L\subset \mathcal{Y}$ is open and dense, $\mathcal{Y}$ is reduced, and $T$ is separated,
\cite[Corollary 9.9]{ MR2675155} implies that $g_T \circ \varphi \circ g^{-1}=\varphi$, i.e.~$\varphi$ is $G$-equivariant.\\
Let $\pi:\mathcal{Y}\to \mathcal{X}:=\mathcal{Y}/G$ be the quotient.
Using that the maps in the square on the left hand side are $G$-equivariant, we get the following big commutative diagram.
\[
  \xymatrix{
X_L\ar[d]\ar[r] \ar@/^6mm/[rrr] & \mathcal{Y}\ar[d]^\varphi \ar@/^6mm/[rrr]^\pi\ar@{}[ld]|\square & & X\ar[d]\ar[r] & \mathcal{X}\ar[d]\\
\Spec(L)\ar[r]\ar@/_6mm/[rrr] & T \ar@/_6mm/[rrr]_{}& & \Spec(K)\ar[r] & S
}
\]
Note that $\mathcal{X}$ is an $S$-scheme of finite type by \cite[Expos\'e V, Proposition~1.5]{MR0217087}.
As $\mathcal{X}$ is a quotient by a finite group of the integral scheme $\mathcal{Y}$, it is integral, too.
As $\Spec(L)\hookrightarrow T$ is flat, by \cite[Expos\'e V, Proposition 1.9]{MR0217087} we obtain
\[
\mathcal{X}_K=\mathcal{Y}/G\times_S \Spec(K) \cong \mathcal{Y}\times_S \Spec(K)/{G}=X_L/{G}=X.
\]
Hence $\mathcal{X}\to S$ is a model of $X$. 
\end{rem}

In general the quotient $\mathcal{X}$ will be singular. To see this, look at the following example.

 \begin{ex}
 \label{the example}
Let $k$ be an algebraically closed field with $\Char (k)\neq 2$, and
set ${K\!:=k\Mel s\Mer}$, $L\!:=k\Mel t \Mer$ with $t^2 =s$.
Hence $L/K $ is a tame Galois extension with Galois group $G=\mathbb{Z}/2\mathbb{Z}$.
The Galois action on $k\Mel t\Mer$ is given by
\[
\alpha:k\Mel t\Mer \to k\Mel t\Mer ;\  P(t)\mapsto P(-t).
\]
Set $X:=\mathbb{A}^1_K=\Spec(k\Mel s \Mer[y])$.
The Galois action on $X_L=\Spec(k\Mel t \Mer[y])$ is given by
\[
\beta :k\Mel t\Mer[y] \to k\Mel t\Mer[y] ;\  P(t,y)\mapsto P(-t,y).
\]
Look at the smooth $\mathcal{O}_L=k\Pol t\Por$-scheme $\mathcal{Y}:=\mathbb{A}^1_{k\Pol t \Por}=\Spec(k[\![t]\!][x])$ with the $G$-action given by
\[
\gamma :k\Pol t\Por[x]\to k\Pol t\Por[x];\ P(t,x)\mapsto P(-t,-x).
\]
Using the fact that $t$ is invertible in $k\Mel t \Mer$, one shows that the map
\[
X_L=\Spec(k\Mel t \Mer(y))\to \mathcal{Y}_L=\Spec(k\Mel t \Mer(x))
\]
given by sending $t$ to $t$ and $y$ to $xt$
is a $G$-equivariant isomorphism.
Hence $\mathcal{Y}$ is a model of $X_L$ with a $G$-action extending the Galois action on $X_L$.

The $k\Pol t \Por^G\!\!=\!k \Pol t^2 \Por=k\Pol s \Por=\mathcal{O}_K$-scheme
\[
\mathcal{X}:=\mathcal{Y}/G=\Spec(k\Pol t \Por[x]^G)= \Spec(k\Pol t^2\Por [tx,x^2]) \cong {\Spec(k\Pol s\Por [b,c]/(sc-b^2))}
\]
is singular in $(0,0,0)$,
so by Remark \ref{XL to X} it $\mathcal{X}$ is a singular model of $X$.
\end{ex}

\begin{rem}
 In order to get a projective example, replace in Example~\ref{the example}
 $X=\mathbb{A}^1_K$ by $\mathbb{P}^1_K$ and $\mathcal{Y}=\mathbb{A}^1_{k\Pol t \Por}$ by
$\mathbb{P}_{k\Pol t \Por}^1$ with a $G$-action given by $g\in \Aut(\mathbb{P}_{k\Pol t \Por}^1)$
such that 
\[
g(t,[x_0:x_1])=(-t,[-x_0:x_1]).
\]
\end{rem}

For a given $K$-variety $X$ and a Galois extension $L/K$, there exist interesting models of $X_L$ with a good action of the Galois group as in Remark \ref{XL to X}.
In this context we can show the following theorem.

\begin{thm} \label{gaction on wnm}
Let $L/K$ be a Galois extension with Galois group $G\!:=\Gal(L/K)$, and let $\mathcal{O}_L$ be the ring of integers of $L$, $T\!:=\Spec(\mathcal{O}_L)$.
Then for a given smooth and proper $K$-variety $X$,
there exists a weak N\'eron model 
$\varphi:\mathcal{Y}\to T$
of ${X_L}$ and a good $G$-action on $\mathcal{Y}$ extending the Galois action on $X_L$.
\end{thm}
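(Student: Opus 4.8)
The plan is to start with \emph{some} weak N\'eron model of $X_L$ and then modify it to carry a $G$-action extending the Galois action, rather than trying to produce an equivariant one from scratch. Since $X$ is smooth and proper over $K$, the base change $X_L$ is smooth and proper over $L$, so by \cite[Chapter 3.5, Theorem~2]{MR1045822} a weak N\'eron model $\mathcal{Y}_0\to T$ exists. The Galois group $G$ acts on $X_L$ (Remark~\ref{Galois action}); for each $g\in G$ I would pull back $\mathcal{Y}_0$ along the automorphism of $T$ induced by $g$ to get another weak N\'eron model $g^*\mathcal{Y}_0$ of $X_L$ (the pullback of a smooth separated $T$-scheme along an automorphism of $T$ is again smooth and separated, and the $\mathcal{O}_L$-points match up because the automorphism of $T$ is compatible with the $G$-action on $\Spec(L)$). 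Then I would take a common ``refinement'': a weak N\'eron model $\mathcal{Y}_1$ dominating all the $g^*\mathcal{Y}_0$ simultaneously, for instance by N\'eron-smoothening a proper model dominating them all, or by using that weak N\'eron models admit common refinements since any finite collection of $\mathcal{O}_L$-points extends to each of them. The point of this step is to arrange that $\mathcal{Y}_1$ is ``symmetric enough'' that applying any $g$ does not leave the class of model we are working with.

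The heart of the argument is then to run the N\'eron smoothening \emph{equivariantly}. Concretely, I would begin with a proper model $\mathcal{Y}'$ of $X_L$ over $T$ that admits a good $G$-action extending the Galois action --- such a model exists by taking any proper model of $X$ over $S$, base changing to $T$, and normalizing (the Galois action on $X$ being trivial, this produces the canonical $G$-action on the normalization, exactly as in the semistable reduction theorem, \cite[Chapter~10, Proposition~4.6]{MR1917232}); one then replaces $\mathcal{Y}'$ by an equivariant modification dominating $\mathcal{Y}_1$ as well, so that it carries ``enough'' sections. The N\'eron smoothening of \cite[Chapter~3]{MR1045822} proceeds by repeatedly blowing up the smallest closed subscheme of the special fiber through which the relevant $\mathcal{O}_L$-sections pass and then discarding the non-smooth locus. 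I would check that each such step can be performed $G$-equivariantly: the locus to be blown up is intrinsic (it is where sections concentrate), hence $G$-invariant, so the blow-up inherits a $G$-action; the non-smooth locus over $T$ is likewise $G$-invariant, so removing it preserves the action; and goodness of the action is preserved because blow-ups and open immersions pull affine $G$-invariant charts back to affine $G$-invariant charts (using \cite[Expos\'e V, Proposition~1.8]{MR0217087}). Iterating, the process terminates (by the same dimension/multiplicity bookkeeping as in \cite{MR1045822}) in a smooth separated model $\mathcal{Y}$ with a good $G$-action for which every $\mathcal{O}_L$-point of $X_L$ lifts --- i.e.\ a weak N\'eron model of $X_L$ with the desired action.

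Two points need care. First, I must make sure the starting proper model already contains \emph{all} the $\mathcal{O}_L$-sections that a weak N\'eron model is required to see; this is where I fold in a modification dominating the symmetric refinement $\mathcal{Y}_1$ constructed above, so that the equivariant smoothening of $\mathcal{Y}'$ still captures every point of $X_L(L)=X(K)\otimes\cdots$, equivalently that the resulting $\mathcal{Y}(\mathcal{O}_L)\to X_L(L)$ is surjective (injectivity and properness of the bijection on points is automatic from separatedness and the valuative criterion once the sections are present). Second, since $L/K$ need not be tame in this theorem, the quotient $\mathcal{Y}/G$ is not claimed to behave well --- but that is irrelevant here: the theorem only asserts the existence of $\mathcal{Y}$ with its action, and all the constructions above (base change, normalization, equivariant blow-up, deletion of the non-smooth locus) make sense for arbitrary finite $G$.

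The main obstacle I anticipate is verifying that the N\'eron smoothening algorithm of \cite[Chapter~3]{MR1045822} can be made canonical enough to commute with $G$: one has to phrase each blow-up center so that it is manifestly $G$-invariant (which means choosing it as the schematic closure of the union of the images of the problematic sections, an intrinsic recipe, rather than via any choice-dependent construction), and one has to confirm that ``goodness'' of the action survives the whole tower --- i.e.\ that at no stage does an orbit fail to lie in an affine open. I expect the rest (termination, the point-bijection property, existence of the equivariant starting model) to be routine given the cited results.
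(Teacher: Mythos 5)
Your core plan --- produce an equivariant \emph{proper} model of $X_L$ over $T$ by base-changing a proper model of $X$ over $S$, then run the N\'eron smoothening $G$-equivariantly and pass to the ($G$-invariant) smooth locus --- is exactly the paper's approach. Two remarks.

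The opening ``common refinement'' paragraph (pulling back some weak N\'eron model $\mathcal{Y}_0$ along each $g\in G$, building $\mathcal{Y}_1$ dominating all the $g^*\mathcal{Y}_0$, and later arranging that your equivariant proper model dominate $\mathcal{Y}_1$ ``so that it carries enough sections'') is superfluous and reflects a small misconception. Once you start from a proper model $\mathcal{Y}'$ of $X_L$ over $T$, the valuative criterion already forces $\mathcal{Y}'(\mathcal{O}_L)=X_L(L)$; no sections can be missing. The obstruction to being a weak N\'eron model is not a shortage of sections but that some of them factor through the non-smooth locus, which is precisely what the smoothening repairs. So the entire $\mathcal{Y}_1$ detour should be dropped. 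The normalization of $\mathcal{X}_T$ is likewise unnecessary: since $\mathcal{X}\to S$ is flat, the base change $\mathcal{X}_T=\mathcal{X}\times_S T$ already carries a good $G$-action with $\mathcal{X}_T/G\cong\mathcal{X}$ by \cite[Expos\'e V, Proposition~1.9]{MR0217087}; the only thing to check is that $\mathcal{X}_T$ is integral, which follows from flatness over $T$ and connectedness of the generic fiber.

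The step you correctly flag as the ``main obstacle'' --- making the N\'eron smoothening of \cite[Chapter~3]{MR1045822} commute with $G$ --- is indeed where the real work lies, and it is subtler than ``blow up the schematic closure of the images of the problematic sections.'' That closure would be $V^1$ in the notation of \cite[Chapter~3.4, Theorem~2]{MR1045822}; the algorithm then refines it to a terminal $V^t$ by iterating a filtration: one discards from $F^i$ the sections specializing into the open $U^i\subset V^i$ where $V^i$ is smooth over $k$ \emph{and} $\Omega^1_{\mathcal{Y}/T}\!\mid_{V^i}$ is locally free, and replaces $V^i$ by the closure of the specialization of $F^{i+1}=F^i\setminus E^i$. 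To make this $G$-equivariant one has to show inductively that each $F^i$, $V^i$, and $U^i$ is $G$-invariant; the nontrivial ingredient is that $g^*\Omega^1_{\mathcal{Y}/T}\cong\Omega^1_{\mathcal{Y}/T}$ (from $G$-equivariance of $\mathcal{Y}\to T$), which makes the locally-free locus $G$-invariant. One also needs that $\Sm(\mathcal{Y}'/T)$ is $G$-invariant, and that goodness of the action survives both the blowups and the final open restriction (using projectivity of blowups and \cite[Chapter~3, Proposition~3.36.b]{MR1917232}). This is exactly Lemma~\ref{blowup ginvariant} in the paper; your proposal states the right intuition but leaves the verification to ``I anticipate routine,'' which is where the argument actually is.
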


\begin{proof}
In order to prove this theorem, we need to recall how a weak N\'eron model is constructed.
The main tool of showing that weak N\'eron models actually exist is the so called N\'eron smoothening.

\begin{defn}
Let $X$ be a smooth $K$-variety, and let $\mathcal{X}\to S$ be a model of $X$.
A \emph{N\'eron smoothening} of $\mathcal{X}$ is a proper $S$-morphism $f\!:\mathcal{X}'\to \mathcal{X}$
such that $f$ is an isomorphism on the generic fibers,
and such that the canonical map $\Sm(\mathcal{X}'/S)(S)\to \mathcal{X}'(S)$ is bijective.
Here $\Sm(\mathcal{X}'/S)$ is the smooth locus of $\mathcal{X}'$ over $S$.
\end{defn}

In order to prove Theorem \ref{gaction on wnm} we need the following Lemma.

\begin{lem}\label{blowup ginvariant}
Let $Y$ be a smooth $L$-variety, let
$\mathcal{Y}\to T$ be a model of $Y$ with a good $G$-action, and assume
that the structure map $\varphi: \mathcal{Y} \to T$ is $G$-equivariant for this action and the Galois action on $T$.
Then there exists a projective N\'eron smoothening $f: \mathcal{Y}' \to \mathcal{Y}$, and a good $G$-action on $\mathcal{Y}'$ such that $f$ is $G$-equivariant.  
\end{lem}

\begin{proof}
By \cite[Chapter 3.1, Theorem 3]{MR1045822}
there exists a N\'eron smoothening $f:\mathcal{Y}'\to \mathcal{Y}$,
which consists of a finite sequence of blowups
with centers in the special fibers.
We need to construct a $G$-action on $\mathcal{Y}'$ such that $f$ is $G$-equivariant.\\
Note that if we blow up an integral scheme $U$ with a good $G$-action in a closed $G$-invariant subscheme $V\subset U$, and denote by $u:U' \to U$ the blowup,
then there is a $G$-action on $U'$ such that $u$ is $G$-equivariant.
The reason for this is the following.
Let $h\in G$ be any element, $g_U\in \Aut(U)$ its image. As $V$ is $G$-invariant, $g_U(V)=V$.
So by the universal property of blowup, see \cite[Chapter II, Corollary 7.15]{MR0463157}, there exists a unique $g_{U'}\in \Aut(U')$ such that $u\circ g_{U'}=g_U \circ u$.
This way we can define the required group action on $U'$, and $u$ is $G$-equivariant by construction.

Consider $f$, which is a sequence of blowups, i.\thinspace e.~we have
\[
  \xymatrix{
\mathcal{Y}'=:\!\mathcal{Y}_m\ar[r]^{f_{m-1}}\ar@/_6,6mm/[rrrr]_f&\mathcal{Y}_{m-1}\ar[r]^{f_{m-2}}&\dots \ar[r]^{f_{1}}&\mathcal{Y}_1\ar[r]^/-2mm/{f_{0}}&\mathcal{Y}_0\!:=\mathcal{Y}.
}
\]
Here the $f_i$ are blowups of some closed subschemes $V_i\subset \mathcal{Y}_i$.
One checks in the proof of \cite[Chapter 3.4, Theorem 2]{MR1045822}
that all the $V_i$ are obtained using the same construction.
Hence if we show that $V\!:=V_0\subset \mathcal{Y}$ is $G$-invariant,
then we obtain a $G$-action on $\mathcal{Y}_1$ such that $f_0$ is $G$-equivariant,
hence $\varphi \circ f_0$ is $G$-equivariant,
and we can conclude inductively on the length of the sequence of blowups.

One can check in \cite[Chapter 3.4, Theorem 2]{MR1045822} that
$V$ is constructed as follows.
Let ${E\subset \mathcal{Y}(\mathcal{O}_L)}$ be the subset of all $\sigma \in \mathcal{Y}(\mathcal{O}_L)$ not factoring through $\Sm(\mathcal{Y}/T)$,
with $\Sm(\mathcal{Y}/T)$ the smooth locus of $\mathcal{Y}$ over $T$.
Set $F^1:=E$.
Let  $s:\mathcal{Y}(\mathcal{O}_L)\to \mathcal{Y}_k(k)$ be the specialization map, and
$V^i$ the Zariski closure of $s(F^i)$ in $\mathcal{Y}$, and
let $U^i\subset V^i$ be the largest open subset, such that $U^i$ is smooth over $k$, and that $\Omega^1_{\mathcal{Y}/T}\!\!\mid_{V^i}$ is locally free over $U^i$.
Set $E^i:= \{ a\in F^i\mid s(a)\in U^i\}$, and
$F^{i+1}:=F^i\!\setminus\! E^i$.
Note that there is a minimal $t\in \mathbb{N}$ such that $F^{t+1}= \emptyset$.
Set $V=V^t$.

The action of $G$ on $\mathcal{Y}$ induces a $G$-action on $\mathcal{Y}(\mathcal{O}_L)$, and, as $\varphi$ is $G$-equivariant, and hence $\mathcal{Y}_k\subset \mathcal{Y}$ is $G$-invariant, a $G$-action on $\mathcal{Y}_k(k)$.
Note that $s$ is $G$-equivariant.
We now show by induction that $F^i$ is $G$-invariant for all $i$.

Take any $h\in G$, let $g$ be its image in $\Aut(\mathcal{Y})$, $g_T$ its image in $\Aut(T)$.
Note that
\[
 \varphi\! \mid _{g(\Sm(\mathcal{Y}/T))}= {g_T}\circ \varphi \circ g^{-1}\!\! \mid_{g(\Sm(\mathcal{Y}/T))}= {g_T}\circ \varphi\!\mid_{\Sm(\mathcal{Y}/T)} \circ g^{-1}\!\!\mid_{g(\Sm(\mathcal{Y}/T))}.
\]
Hence $\varphi\mid_{g(\Sm(\mathcal{Y}/T))}$  is smooth, which implies that $\Sm(\mathcal{Y}/T)$ is $G$-invariant, hence $E=F^1$ is $G$-invariant.
So we may assume that $F^i$ is $G$-invariant for some $i$.

Consider $Z_i\!:=\cap_{h\in G}h(V^i)\subset V^i$.
By construction, $Z_i$ is closed in $\mathcal{Y}$ and $G$-invariant.
As $F^i$ is $G$-invariant by assumption, $s(F^i)\subset V^i$ is $G$-invariant, and hence $s(F^i)\subset h(V^i)$ for all $h\in G$, i.e.~$s(F^i)\subset Z_i$.
So by definition of the Zariski closure, $V^i=Z_i$, hence in particular $V^i$ is $G$-invariant.\\
Let $\Sm(V^i)$ be the smooth locus of $V^i$ over $k$.
Note that $U^i=\Sm(V^i)\cap W^i$, with $W^i\subset V^i$ the largest open subset over which $\Omega^1_{\mathcal{Y}/T}\!\!\mid_{V^i}$  is locally free.
As the $G$-action on $V^i$ is given by isomorphisms, regular points are mapped to regular points, hence $\Sm(V^i)$ is $G$-invariant. Now we examine $W^i$.
Since $G$ acts equivariantly on $\mathcal{Y}\to T$, the natural map
$g^*(\Omega^1_{\mathcal{Y}/T}) \to \Omega^1_{\mathcal{Y}/T}$ is an isomorphism.
As $V^i$ is $G$-invariant,
$g^*(\Omega^1_{\mathcal{Y}/T})\!\!\mid_{V^i} \to \Omega^1_{\mathcal{Y}/T}\!\!\mid_{V^i}$
is an isomorphism, too.
Altogether we obtain
\[
\Omega^1_{\mathcal{Y}/T}\!\!\mid_{V^i\cap W^i}\cong g^*(\Omega^1_{\mathcal{Y}/T})\!\!\mid_{V^i\cap W^i}=g^*(\Omega^1_{\mathcal{Y}/T}\!\!\mid_{V^i\cap g^{-1}(W^i)}).
\]
As the first is locally free by definition of $W^i$, $g^*(\Omega^1_{\mathcal{Y}/T}\!\!\mid_{V^i\cap g^{-1}(W^i)})$ is locally free, too.
As $g$ is an automorphism of $\mathcal{Y}$, $\Omega^1_{\mathcal{Y}/T}\!\!\mid_{V^i\cap g^{-1}(W^i)}$ is locally free.
Hence by definition of $W^i$, $g^{-1}(W^i)\subset W^i$,
i.\thinspace e.~$W^i$ is $G$-invariant.
Hence $U^i=\Sm(V^i)\cap W^i$ is $G$-invariant, too.

Using that $s$ is $G$-equivariant, we get that $E^i$ is $G$-invariant, and therefore $F^{i+1}$ is $G$-invariant, too.
So it follows by induction that for all $i$, $F^i$ is $G$-invariant, in particular $F^t$ is $G$-invariant.
Using the same argument as in the induction, we can show that $V^t=V$ is $G$-invariant, and this is what we wanted to show.

We still need to show that the $G$-action on $\mathcal{Y}$ is good.
So take any orbit in $\mathcal{Y}'$. Its image under $f$ will be contained in an open affine subset $U\subset \mathcal{Y}$.
As $f$ is projective, $f^{-1}(U)$ is projective over $U$ and contains our orbit, which is finite, because $G$ is finite.
By \cite[Chapter~3, Proposition~3.36.b]{ MR1917232}
there is an affine subset $U'\subset f^{-1}(U)$ containing every finite set of points.
Hence the action on $\mathcal{Y}'$ is good.
\end{proof}

In \cite{private} the following similar theorem in the context of formal schemes is proven:

\begin{thm*}
Any generically smooth, flat, separated formal $\mathcal{O}_L$-scheme $X_\infty$, topologically of finite type over $\mathcal{O}_L$,
endowed with a good $G$-action compatible with the $G$-action on $\mathcal{O}_L$, admits a $G$-equivariant N\'eron smoothening.
\end{thm*}

\medskip

Now we are finally ready to prove Theorem \ref{gaction on wnm}.
So let $X$ be a smooth, proper $K$-variety.
In particular $X$ is a separated $S$-scheme of finite type, hence
by Nagata's embedding Theorem, see \cite[Theorem 12.70]{ MR2675155}, there exists a proper, integral $S$-scheme $\mathcal{X}$
and an immersion $X\hookrightarrow \mathcal{X}$ over $S$ which is schematically dense.
As $X$ is proper over $K$, $\mathcal{X}_K$ is in fact isomorphic to $X$.
Altogether, $\mathcal{X}\to S$ is a proper model of $X$.

Set $\mathcal{X}_T\!:=\mathcal{X}\times_S T$, and
let $\Phi: \mathcal{X}_T\to T$ be the projection to $T$.
Note that $\Phi$ is proper, and
$\mathcal{X}_T\times_T\Spec(L)=X_L$.
By Remark~\ref{flach}, $\mathcal{X}$ is flat over $S$, therefore $\mathcal{X}_T$ is flat over $T$.
Hence there cannot be a connected component of $\mathcal{X}_T$ only supported on the special fiber. But the generic fiber $X_L$ of $\mathcal{X}_T$ is connected, hence $\mathcal{X}_T$ is connected.
Hence one can check locally that
$\mathcal{X}_T$ is integral, which is straightforward to check.
Altogether, $\Phi: \mathcal{X}_T \to T$ is a proper model of X.

As $\mathcal{X}\to S$ is flat, by \cite[Expos\'e V, Proposition 1.9]{MR0217087} there exists a good $G$-action on $\mathcal{X}_T$
such that $\Phi$ is $G$-equivariant, and $\mathcal{X}_T/G\cong \mathcal{X}$.
This $G$-action extends the Galois action on $X_L$ by construction.

By Lemma \ref{blowup ginvariant} there exists a projective N\'eron smoothening
$f:\mathcal{Y}'\to \mathcal{X}_T$,
and a good $G$-action on $\mathcal{Y}'$ such that $f$ is $G$-equivariant.
Let $\mathcal{Y}\subset \mathcal{Y}'$ be the smooth locus of $\Phi \circ f$.
Set $\varphi:=\Phi\circ f\mid_{\mathcal{Y}}$.
Note that $\varphi$ is separated. We have the following commutative diagram.
\[
  \xymatrix{
\mathcal{Y} \ar[ddr]_{\varphi}\ar@{^(->}[r] & \mathcal{Y}' \ar[d]^{f}\\
& \mathcal{X}_T \ar[d]^{\Phi}\\
 & T
}
\]
Note that $X_L$ is smooth, because $X$ is smooth.
As $f$ is a N\'eron smoothening, $\mathcal{Y}'_L=\mathcal{X}_T\times_T \Spec(L)=X_L$.
Hence $\mathcal{Y}'_L$ is in particular smooth over $T$, so
$\mathcal{Y}_L=\mathcal{Y}'_L=X_L$.
As $\mathcal{X}_T$ is integral, $\mathcal{Y}'$ and $\mathcal{Y}$ are integral, too.
Hence $\varphi: \mathcal{Y}\to T$ is a smooth and separated model of $X_L$.
As $\Phi$ and $f$ are proper, by the valuative criterion of properness the natural map
$\mathcal{Y}'(\mathcal{O}_L)\to X_L(L)$ is a bijection.
As $f$ is a N\'eron smoothing, $\mathcal{Y}'(\mathcal{O}_L)=\mathcal{Y}(\mathcal{O}_L)$. 
So $\varphi: \mathcal{Y}\to T$ is a weak N\'eron model of $X_L$.

We still need to show that there is a good $G$-action on $\mathcal{Y}$ extending the Galois action on $X_L$.
As $f$ is $G$-invariant for the $G$-action on $\mathcal{Y}'$ and $\mathcal{X}_T$, the $G$-action $\mathcal{Y}'$ extends the Galois action on $X_L$.
So it suffices to show that this $G$-action restricts to $\mathcal{Y}$,
i.e.~that $\mathcal{Y}\subset \mathcal{Y}'$ is $G$-invariant. 
To show this, we can simply use the proof in the base case of the induction in Lemma \ref{blowup ginvariant}.
Note that the action is good for the following reason.
Take any orbit in $\mathcal{Y}$. As the action on $\mathcal{Y}'$ is good, it is contained an open affine subset $U\subset \mathcal{Y}'$.
So it is contained in $U\cap \mathcal{Y}$, which is open in $U$. So by \cite[Chapter~3, Proposition~3.36.b]{ MR1917232}
there is an affine subset $U'\subset U\cap \mathcal{Y}$ containing our finite orbit.
\end{proof}

In \cite[Proposition 4.5]{1009.1281} the following similar statement is proven.

\begin{prop*}
Let $G$ be any finite group, $X$ a smooth and proper $K$-variety,
endowed with a good $G$-action.
Then there is a weak N\'eron model $\mathcal{X}\to S$ of $X$ endowed with a good $G$-action,
such that $X\hookrightarrow \mathcal{X}$ is $G$-equivariant.
\end{prop*}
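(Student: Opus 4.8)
The plan is to transcribe the proof of Theorem~\ref{gaction on wnm}, with the base $T$ replaced by $S$ and the $G$-action on the base taken to be trivial. The only genuinely new point is that here $G$ acts on $X$ over $K$ rather than through a Galois extension, so one cannot obtain a $G$-equivariant proper model for free by flat descent along $\Spec(L)\to T$; instead one has to produce it by hand, and this is where the real work lies.

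\textbf{Step 1: a good $G$-equivariant proper model.} By Nagata's embedding theorem there is a proper, integral $S$-scheme $\mathcal{X}_0$ and a schematically dense immersion $X\hookrightarrow\mathcal{X}_0$; since $X$ is proper over $K$ one has $(\mathcal{X}_0)_K\cong X$, so $\mathcal{X}_0$ is a proper model of $X$. To make the action extend, let $\mathcal{W}:=\prod_{g\in G}\mathcal{X}_0$ (fibre product over $S$), with $G$ acting by permuting the factors, and let $\iota:X\to\mathcal{W}$ be the morphism whose $g$-th component is $X\xrightarrow{\,g^{-1}\,}X\hookrightarrow\mathcal{X}_0$; one checks that $\iota$ is a $G$-equivariant immersion. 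Let $\mathcal{X}_1$ be the scheme-theoretic closure of $\iota(X)$ in $\mathcal{W}$. Then $\mathcal{X}_1$ is a closed, $G$-invariant, integral, proper $S$-scheme with $(\mathcal{X}_1)_K\cong X$, i.e.~a $G$-equivariant proper model of $X$, and $X\hookrightarrow\mathcal{X}_1$ is $G$-equivariant. The $G$-action on $\mathcal{X}_1$ is good: for a finite orbit, the finite set of all coordinates of its points lies -- by quasi-projectivity of $\mathcal{X}_0$ over $S$ and \cite[Chapter~3, Proposition~3.36.b]{MR1917232} -- in an affine open $U_0\subseteq\mathcal{X}_0$, and then $(\prod_{g}U_0)\cap\mathcal{X}_1$ is a $G$-invariant affine open containing the orbit. (When $X$ is projective over $K$ one may take $\mathcal{X}_0$ projective; in the general proper case one replaces the construction above by an equivariant Nagata compactification, and it is here that the hypothesis that the $G$-action on $X$ is good is used to guarantee goodness of the resulting action.)

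\textbf{Step 2: $G$-equivariant N\'eron smoothening and smooth locus.} The proof of Lemma~\ref{blowup ginvariant} uses only $G$-equivariance of the structure morphism and goodness of the action, so it applies verbatim to $\mathcal{X}_1\to S$ (with the trivial action on $S$) and yields a projective N\'eron smoothening $f:\mathcal{Y}'\to\mathcal{X}_1$ carrying a good $G$-action for which $f$ is $G$-equivariant. Put $\mathcal{X}:=\Sm(\mathcal{Y}'/S)$. Since every $g\in\Aut(\mathcal{Y}')$ is an $S$-automorphism, smoothness over $S$ is preserved, so $\mathcal{X}\subseteq\mathcal{Y}'$ is $G$-invariant (this is exactly the base case of the induction in Lemma~\ref{blowup ginvariant}), and the restricted action is good by the same finite-orbit argument. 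As $f$ is an isomorphism on generic fibres and $X$ is smooth, $(\mathcal{Y}')_K=X\subseteq\mathcal{X}$, so $\mathcal{X}$ is a smooth model of $X$, separated because it is open in the proper $S$-scheme $\mathcal{Y}'$; the $G$-action on $\mathcal{Y}'$ extends that on $X$ (via $f$ and Step~1), hence so does the one on $\mathcal{X}$, and $X\hookrightarrow\mathcal{X}$ is $G$-equivariant. Finally, the valuative criterion of properness gives a bijection $\mathcal{Y}'(\mathcal{O}_K)\to X(K)$, and since $f$ is a N\'eron smoothening $\mathcal{Y}'(\mathcal{O}_K)=\mathcal{X}(\mathcal{O}_K)$; hence $\mathcal{X}(\mathcal{O}_K)\to X(K)$ is bijective and $\mathcal{X}$ is a weak N\'eron model of $X$.

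\textbf{Main obstacle.} Everything after Step~1 is a routine transcription of Lemma~\ref{blowup ginvariant} and the end of the proof of Theorem~\ref{gaction on wnm}; the crux is Step~1, namely constructing a proper model to which the $G$-action extends \emph{as a good action}. For quasi-projective $X$ this is the product-and-closure construction above; for general proper $X$ one must invoke an equivariant compactification theorem, and goodness can genuinely fail in the absence of the standing hypothesis that the $G$-action on $X$ is good.
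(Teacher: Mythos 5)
The paper does not prove this Proposition itself; it quotes it from \cite[Proposition 4.5]{1009.1281} immediately after Theorem~\ref{gaction on wnm}, so there is no internal proof to compare against, and I will evaluate your argument on its own terms and against the template of Theorem~\ref{gaction on wnm}. Your Step~2 is correct: the proof of Lemma~\ref{blowup ginvariant} uses only that the structure morphism is $G$-equivariant and the action good, not that the $G$-action on the base is nontrivial, so it transfers verbatim to an action of an arbitrary finite group over $S$, and the extraction of the smooth locus and the bijection on $\mathcal{O}_K$-points go through exactly as in the tail of the proof of Theorem~\ref{gaction on wnm}.

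The genuine gap is in Step~1, and you flag it yourself: the product-and-closure construction yields a $G$-equivariant proper model whose action you can only verify to be good when $\mathcal{X}_0$ is quasi-projective over $S$, and Nagata's theorem does not produce a quasi-projective compactification in general. Appealing to ``an equivariant Nagata compactification'' begs the question, since goodness of the action on the compactification is precisely what has to be established, and your construction never actually invokes the standing hypothesis that the $G$-action on $X$ is good. The argument that does use this hypothesis, and handles arbitrary proper $X$, is to pass through the quotient: goodness of the action gives a proper $K$-variety $Y:=X/G$ together with a finite $G$-invariant morphism $X\to Y$; choose any proper integral $S$-model $\mathcal{Z}_0$ of $Y$ and let $\mathcal{X}_1\to\mathcal{Z}_0$ be the normalization of $\mathcal{Z}_0$ in $K(X)$, which is finite because $\mathcal{O}_K$ is excellent. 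Since $X$ is smooth, hence normal, and finite over $Y$, the generic fiber of $\mathcal{X}_1$ is $X$, and the $G$-action on $K(X)$ over $K(Y)=K(X)^G$ induces a $G$-action on $\mathcal{X}_1$ over $\mathcal{Z}_0$ extending that on $X$. Because $\mathcal{X}_1\to\mathcal{Z}_0$ is finite and $G$-invariant, each $G$-orbit lies over a single point of $\mathcal{Z}_0$, and the affine preimage of any affine open neighborhood of that point is a $G$-invariant affine open containing the orbit; so the action on $\mathcal{X}_1$ is automatically good, with no quasi-projectivity hypothesis. After this repair, your Step~2 applies unchanged.
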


\section{A Canonical Weak N\'eron Model of a Quotient Scheme}
\begin{thm}
\label{main theorem}
Let $L/K$ be a tame Galois extension, $G:=\Gal(L/K)$.
Let $\mathcal{O}_L$ be the ring of integers of $L$, $T:=\Spec(\mathcal{O}_L)$.
Let $X$ be a smooth $K$-variety, and let $\varphi: \mathcal{Y}\to T$ be a smooth model of $X_L$ with a good $G$-action extending the Galois action on $X_L$.
Let $\mathcal{X}:=\mathcal{Y}/G$ be the quotient.\\
Then there is a smooth model $\mathcal{Z}\to S$ of $X$ and a separated $S$-morphism ${\Phi: \mathcal{Z}\to \mathcal{X}}$,
such that the induced map $\mathcal{Z}(S)\to \mathcal{X}(S)$ is a bijection, and such that
for all smooth integral $S$-schemes $\mathcal{V}$ and
all dominant $S$-morphisms $\Psi: \mathcal{V}\to \mathcal{X}$ there is a unique S-morphism $\Psi':\mathcal{V}\to \mathcal{Z}$ making the following diagram commutative.
\[
  \xymatrix{
 \mathcal{V}\ar[rd]^{\Psi'} \ar[dd]_\Psi\\
 & \mathcal{Z} \ar[dl]^\Phi  \\
\mathcal{X}
 }
\]
In particular $\mathcal{Z}$ is unique with this property up to a unique isomorphism.\\
If $\mathcal{Y}\to T$
is a weak N\'eron model of $X_L$, then $\mathcal{Z}\to S$ is a weak N\'eron model of $X$.
\end{thm}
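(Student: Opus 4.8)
The plan is to build $\mathcal{Z}$ as a fixed locus on a Weil restriction, as in Construction~\ref{construction Z} (the idea going back to \cite{MR1149171}). Write $T=\Spec(\mathcal{O}_L)$ and let $\mathcal{W}:=\Res_{T/S}(\mathcal{Y})$ be the Weil restriction of $\mathcal{Y}$ along the finite flat morphism $T\to S$; it is of finite type over $S$ and represents $\mathcal{V}\mapsto\Hom_T(\mathcal{V}\times_S T,\mathcal{Y})$ on $S$-schemes. The Galois action on $T$ together with the given $G$-action on $\mathcal{Y}$ — which by Remark~\ref{XL to X} makes $\varphi$ equivariant over the Galois action on $T$ — induces an \emph{honest} (no longer semilinear) $G$-action on $\mathcal{W}$, sending $\psi\colon\mathcal{V}\times_S T\to\mathcal{Y}$ to $g_{\mathcal{Y}}\circ\psi\circ(\mathrm{id}_{\mathcal{V}}\times g_T)^{-1}$, where $g_{\mathcal{Y}}\in\Aut(\mathcal{Y})$ and $g_T\in\Aut(T)$ are the images of $g$; equivariance of $\varphi$ is exactly what makes the result again a $T$-morphism, and one checks it is a group action. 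I set $\mathcal{Z}:=\mathcal{W}^G$, the fixed-point subscheme. Combining the universal property of the Weil restriction with that of the fixed locus, $\Hom_S(\mathcal{V},\mathcal{Z})$ is naturally identified with the set of $T$-morphisms $\mathcal{V}\times_S T\to\mathcal{Y}$ that are $G$-equivariant for the Galois action on $\mathcal{V}\times_S T$ (through the $T$-factor) and the given action on $\mathcal{Y}$.

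The step I expect to be the main obstacle will be showing that $\mathcal{Z}\to S$ is \emph{smooth}: this is precisely where tameness is used, since $T\to S$ is ramified and $\mathcal{W}$ itself is singular in general, only the fixed locus being smooth. I would argue on complete local rings at closed points $z$ of the special fiber, which by Lemma~\ref{ap1} and Lemma~\ref{ap2} are governed by the $G$-action on the complete local ring at the corresponding fixed point $y\in\mathcal{Y}^G$. Since $\mathcal{Y}\to T$ is smooth of relative dimension $d:=\Dim X$ and $k$ is algebraically closed, $\widehat{\mathcal{O}_{\mathcal{Y},y}}\cong\mathcal{O}_L\Pol x_1,\dots,x_d\Por$ with a $G$-action extending the Galois action on $\mathcal{O}_L$, and the completion of $\mathcal{W}$ at $z$ is the Weil restriction of this formal disc. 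As $L/K$ is tame, $G$ is cyclic of order $n$ prime to $\Char(k)$ and acts on $\mathcal{O}_L$ through a faithful character on a uniformizer $\pi$; as $|G|$ is invertible, averaging puts the $x_i$ into $G$-eigenvectors. Expanding each $x_i=\sum_{j=0}^{n-1}x_{i,j}\pi^j$ in a basis of $\mathcal{O}_L$ over $\mathcal{O}_K$ writes $\widehat{\mathcal{O}_{\mathcal{W},z}}$ in terms of the eigencoordinates $x_{i,j}$, and for each $i$ exactly one index $j=j_i$ carries the trivial character (the character on $\pi$ generating the character group of $G$). The fixed subscheme kills the $x_{i,j}$ with $j\neq j_i$, so $\widehat{\mathcal{O}_{\mathcal{Z},z}}\cong\mathcal{O}_K\Pol x_{1,j_1},\dots,x_{d,j_d}\Por$; hence $\mathcal{Z}\to S$ is smooth of relative dimension $d$. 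Since Weil restriction and the fixed locus commute with the flat base change $\Spec K\to S$, the generic fiber is $\mathcal{Z}_K=(\Res_{L/K}X_L)^G\cong X$ by Galois descent; being flat over $S$ with connected generic fiber, $\mathcal{Z}$ is connected, hence integral, so $\mathcal{Z}\to S$ is a smooth model of $X$.

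Now that $\mathcal{Z}$ is known to be flat over $S$, the universal equivariant $T$-morphism $e\colon\mathcal{Z}\times_S T\to\mathcal{Y}$ composed with $\mathcal{Y}\to\mathcal{X}$ is $G$-invariant for the $T$-action, hence descends along $\mathcal{Z}\times_S T\to(\mathcal{Z}\times_S T)/G=\mathcal{Z}\times_S(T/G)=\mathcal{Z}$ (using \cite[Expos\'e~V, Proposition~1.9]{MR0217087} and $T/G=S$) to a separated morphism $\Phi\colon\mathcal{Z}\to\mathcal{X}$, which is an isomorphism on generic fibers and in particular dominant. For the universal property I would use that $\mathcal{Y}$ is the normalization of $\mathcal{X}_T:=\mathcal{X}\times_S T$: the map $\mathcal{Y}\to\mathcal{X}_T$ coming from $\mathcal{Y}\to\mathcal{X}$ and $\varphi$ is finite (it becomes finite after the separated $\mathcal{X}_T\to\mathcal{X}$, and $\mathcal{Y}\to\mathcal{X}$ is a quotient by a finite group, hence finite) and an isomorphism on generic fibers, while $\mathcal{Y}$ is normal, being smooth over the regular scheme $T$, and $\mathcal{X}_T$ is integral by the argument in the proof of Theorem~\ref{gaction on wnm}. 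Given a smooth integral $S$-scheme $\mathcal{V}$ and a dominant $S$-morphism $\Psi\colon\mathcal{V}\to\mathcal{X}$, base change gives a dominant morphism $\mathcal{V}\times_S T\to\mathcal{X}_T$ whose source is regular ($\mathcal{V}$ being smooth over $S$, $\mathcal{V}\times_S T$ is smooth over $T$); by the universal property of normalization it factors uniquely through $\mathcal{Y}$, and birationality of $\mathcal{Y}\to\mathcal{X}_T$ together with uniqueness of the factorization force this to be $T$-linear and $G$-equivariant, yielding the required unique $\Psi'\colon\mathcal{V}\to\mathcal{Z}$ with $\Phi\circ\Psi'=\Psi$. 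Uniqueness of $\mathcal{Z}$ then follows by the usual Yoneda argument, applied to $\Phi\colon\mathcal{Z}\to\mathcal{X}$.

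Finally, for $\mathcal{O}_K$-points: injectivity of $\mathcal{Z}(S)\to\mathcal{X}(S)$ is clear, both sets injecting into $X(K)$ by separatedness and the maps being compatible with $\Phi_K=\mathrm{id}_X$. For surjectivity, take $\sigma\in\mathcal{X}(S)$ with generic point $P\in X(K)=X_L(L)^G$; base-changing $\sigma$ along $T\to S$ gives $\widetilde{\sigma}\colon T\to\mathcal{X}$, and $P_L\in X_L(L)=\mathcal{Y}_K(L)$ is a point of $\mathcal{Y}$ over $\widetilde{\sigma}\!\mid_{\Spec L}$. As $\mathcal{Y}\to\mathcal{X}$ is finite, hence proper, the valuative criterion over the discrete valuation ring $\mathcal{O}_L$ produces a unique $\rho\in\mathcal{Y}(\mathcal{O}_L)$ extending $P_L$ and lying over $\widetilde{\sigma}$; since $P_L$ is $G$-fixed and $\widetilde{\sigma}$ is Galois-invariant, the uniqueness in the criterion forces $\rho$ to be $G$-equivariant, so $\rho$ is an element of $\mathcal{Z}(S)=\mathcal{Y}(\mathcal{O}_L)^G$ mapping to $\sigma$ by construction of $\Phi$. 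Hence $\mathcal{Z}(S)\to\mathcal{X}(S)$ is bijective. If moreover $\mathcal{Y}$ is a weak N\'eron model of $X_L$, then the $G$-equivariant bijection $\mathcal{Y}(\mathcal{O}_L)\to X_L(L)$ induces a bijection on $G$-fixed points $\mathcal{Z}(\mathcal{O}_K)=\mathcal{Y}(\mathcal{O}_L)^G\to X_L(L)^G=X(K)$; as $\mathcal{Z}$ is then a smooth, separated model of $X$ (now $\mathcal{W}$ is separated over $S$, being the Weil restriction of the separated $\mathcal{Y}$, and $\mathcal{Z}$ is closed in it), it is a weak N\'eron model of $X$.
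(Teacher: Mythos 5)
Your overall architecture matches the paper's: $\mathcal{Z}:=\Res_{T/S}(\mathcal{Y})^G$ (Construction~\ref{construction Z}, after Edixhoven), $\Phi$ via the quotient $\mathcal{Z}\times_ST\to\mathcal{Z}$, the universal property via the identification of $\mathcal{Y}$ with the normalization of $\mathcal{X}_T$, and the bijection on $S$-points via properness of $\pi$ and the valuative criterion. Your explicit complete-local-ring computation for smoothness is a viable alternative to the paper's direct citation of \cite[Proposition 3.4]{MR1149171} (essentially re-deriving its content at closed points), and your argument for the weak N\'eron model case is in substance the same.

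However, there is one genuine gap that the paper spends a full step on and you pass over: \emph{representability of} $\mathcal{W}:=\Res_{T/S}(\mathcal{Y})$. You assert that $\mathcal{W}$ ``is of finite type over $S$ and represents'' the Weil restriction functor, but \cite[Chapter 7.6, Theorem 4]{MR1045822} requires that every finite set of points of $\mathcal{Y}$ lie in an affine open (e.g.\ $\mathcal{Y}$ quasi-projective over $T$). A weak N\'eron model need not be quasi-projective, so this hypothesis can fail and $\mathcal{W}$ need not be a scheme at all. Consequently, every subsequent step that works ``on $\mathcal{W}$'' or ``on $\mathcal{Z}$ as a closed subscheme of $\mathcal{W}$'' — in particular your completion $\widehat{\mathcal{O}_{\mathcal{W},z}}$, the claim that $\mathcal{Z}$ is closed in a separated $\mathcal{W}$, and the assertion that $\Phi$ descends to a \emph{separated} morphism — is not justified as written. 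The paper circumvents this precisely by exploiting the \emph{good} $G$-action: it covers $\mathcal{Y}$ by affine $G$-invariant opens $U_i$, proves each $\Res_{T/S}(U_i)^G$ is representable, smooth, and separated over $S$, and then shows by a field-valued-point argument that gluing these with $X$ actually represents the functor $\mathcal{Z}=\Res_{T/S}(\mathcal{Y})^G$, \emph{without ever asserting representability of} $\Res_{T/S}(\mathcal{Y})$ itself. Your local computation could be salvaged by first performing this gluing, since $\mathcal{Z}$ is then covered by the $\Res_{T/S}(U_i)^G$; but as written the representability step is missing, and it is the crux that makes the good-action hypothesis indispensable.

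A minor secondary omission: in the universal-property step you invoke the universal property of normalization for $\mathcal{V}\times_ST\to\mathcal{X}_T$, but $\mathcal{V}\times_ST$ may be disconnected, so this has to be applied component by component (with $G$ permuting the components transitively to get a globally $G$-equivariant $\Psi_T$). The paper handles this explicitly.
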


\begin{proof}
The proof consists of six steps. 
First we will give the construction of $\mathcal{Z}$ as a functor of schemes,
then we construct $\Phi$ as a morphism of functors.
In the third step we will show that $\mathcal{Z}$ is represented by a smooth $S$-scheme.
Thereafter we show the properties of $\Phi$, namely that it is separated and that the map $\mathcal{Z}(S)\to \mathcal{X}(S)$ induced by $\Phi$ is an isomorphism.
Afterwards we show the universal property.
In the final step we consider the case that $\mathcal{Y}$ is a weak N\'eron model of $X_L$.

\medskip
 
\textit{Construction of $\mathcal{Z}$.}
We now construct $\mathcal{Z}$.
The construction can be found in 
\cite{ MR1149171}, where it is used in the context of abelian varieties.

\begin{defn}
The \emph{Weil restriction} of a $T$-scheme $U$ to $S$ is defined as the functor
\begin{align*}
 \Res_{T/S}(U): \ &(\Sch/S)\to (\Sets)\\
 &W\mapsto \Hom _T(W\times _ST,U).
\end{align*}
\end{defn}

\begin{defn}\label{y^G}
Let $V$ be an $S$-scheme with a $G$-action, such that the structure map is $G$-equivariant for this action and the trivial action on $S$.
We define the \emph{functor of fixed points} by
\begin{align*}
V^G: \ &(\Sch/S)\to (\Sets)\\
 &W\mapsto V(W)^G=\Hom _S(W,V)^G.
\end{align*}
By \cite[Proposition 3.1]{ MR1149171}
this functor is represented by a subscheme of $V$.
We call this scheme the \emph{fixed locus} of the $G$-action on $V$.
\end{defn}

Note that $G$ is a finite cyclic group, because $L/K$ is a tame Galois extension.
Therefore every $G$-action is given by one automorphism.

\begin{con} \label{construction Z} \cite[Construction 2.4 and Theorem 4.2]{ MR1149171}\\
Fix a generator of $G$, and let $g\in \Aut (\mathcal{Y})$ and $g_T\in \Aut(T)$ be its images.
Then $\tilde{g}\in \Aut(\Res_{T/S}(\mathcal{Y}))$, which maps $f\in \Hom  _T(W\times _ST,\mathcal{Y})$ to $g\circ f \circ (\Id_W\times g_T)^{-1}$
for every $W\in (\Sch/S)$, defines a $G$-action on $\Res_{T/S}(\mathcal{Y})$.
It is easy to see that $\tilde{g}$ is an $S$-morphism. Therefore the structure map $\Res_{T/S}(\mathcal{Y})\to S$ is $G$-equivariant for the $G$-action on $\Res_{T/S}(\mathcal{Y})$ and the trivial $G$-action on $S$.
Define
\begin{align*}
 \mathcal{Z}: \  &(\Sch/S)\to (\Sets)\\
 &W\mapsto (\Res_{T/S}(\mathcal{Y}))^G (W) = \Hom_T(W\times_S T, \mathcal{Y})^G.
\end{align*}
\end{con}

Note that $\Hom_T(W\times_ST,\mathcal{Y})^G$ is the set of $G$-equivariant $T$-morphisms from $W\times_ST$ to $\mathcal{Y}$.

\medskip

\textit{Construction of $\Phi$.}
View $\mathcal{X}$ and $\mathcal{Z}$ as functors from the category of flat $S$-schemes to the category of sets.
We now construct a morphism of functors $\Phi:\mathcal{Z}\to \mathcal{X}$.
As soon as we will have shown that $\mathcal{Z}$ is in fact representable by a flat $S$-scheme,
this will yield an $S$-morphism of schemes by Yoneda's lemma for the category of flat $S$-schemes.

We need to construct maps 
$\Phi(W) : \mathcal{Z}(W)=\Hom _T(W\times_ST,\mathcal{Y})^G\to \mathcal{X}(W)$
for all flat $W\in (\Sch/S)$, and show that they are functorial.
Take any $f\in \mathcal{Z}(W)$.
Let $\pi: \mathcal{Y}\to \mathcal{X}$ be the quotient map.
We have the following commutative diagram.
\begin{align*} 
  \xymatrix{
  & \mathcal{Y} \ar[rd]^{\pi} \ar[d]^{\varphi}  \\
{W\times_ST} \ar[ur]^f\ar[r]\ar[d]_{p_W} & T \ar[d] & {\mathcal{X}} \ar[dl] \\
W \ar@{-->}[urr]^{f'} \ar[r] & S 
}
\end{align*}

As $W\to S$ is flat, \cite[Expos\'e V, Proposition 1.9]{MR0217087} implies that the projection map $p_W: W\times _ST \to W$ is the quotient of the $G$-action on $W\times_ST$ given by $\Id_W\times g_T$.
As $f=g\circ f \circ (\Id_W\times g_T)^{-1}$, and $\pi$ is $G$-equivariant for the $G$-action on $\mathcal{Y}$ and the trivial action on $\mathcal{X}$, we get
$(\pi \circ f) \circ (\Id_W\times g_T)= \pi \circ g \circ f= \pi \circ f$.
Hence $\pi \circ f$ is $G$-equivariant for the $G$-action on $W\times_S T$ and the trivial action on $\mathcal{X}$, and
therefore, by the universal property of the quotient ${p_W:W\times _ST \to W}$ we obtain a unique $f'\in \mathcal{X}(W)$ making the diagram above commutative.
We set $\Phi(W)(f)\!:=f'$.
It is easy to check that this map is functorial.

\medskip

View $\mathcal{X}_K\cong X$ as a presheaf on the category of $K$-schemes.
We now construct an inverse map of functors ${{\Phi\!\mid_{ \mathcal{Z}_K}}^{-1}:X\to \mathcal{Z}_K\subset \mathcal{Z}}$.
Take any $W\in(\Sch/K)$.
Note that $W\times_S T\cong W_L$, hence
\begin{align*}
 \mathcal{Z}_K(W)&=\Hom_T(W\times_ST,\mathcal{Y})^G
=\Hom_T(W_L,\mathcal{Y})^G
=\Hom_L(W_L,X_L)^G.
\end{align*}
Take any $h \in X(W)$, and consider the following diagram with $p_L$ and $p_W$ the projection maps.
\[
  \xymatrix{
W _L\ar@/_1,7pc/[ddr]_{p_L} \ar@{-->}[rd]^{h^*} \ar@/^1,7pc/[drr]^{h\circ p_W} \\
& {X_L} \ar[r]^{\!\!\pi\mid_{X_L}}\ar[d]_{\varphi\mid_{X_L}} & {X} \ar[d] \\
& \Spec (L) \ar@{}[ur]|{\square} \ar[r] & \Spec (K) 
}
\]
As $h$ is a $K$-morphism, the diagram commutes, and hence the universal property of fiber product induces a unique $h^*\in \Hom_L(W_L,X_L)$ with $\pi\circ h^*=h\circ p_W$.
Using that $h^*$ is unique, one can easily show that it is actually $G$-equivariant, hence we may set
 ${\Phi\!\mid_{ \mathcal{Z}_K}}^{-1}(W)(h)\!:=h^*$.
It is straightforward to check functoriality and the fact that ${\Phi\!\mid_{ \mathcal{Z}_K}}^{-1}\circ \Phi\!\mid_{\mathcal{Z}_K}=\Id_{\mathcal{Z}_K}$, and $\Phi\circ {\Phi\!\mid_{ \mathcal{Z}_K}}^{-1}=\Id_{X}$.

\medskip

\textit{Representability of $\mathcal{Z}$.}
Now we are ready to show that $\mathcal{Z}$ is actually represented by an $S$-scheme.
Unfortunately we cannot show that $\Res_{T/S}(\mathcal{Y})$ is representable using \cite[Chapter 7.6, Theorem~4]{MR1045822},
because as $\mathcal{Y}$ does not need to be quasi-projective,
we cannot show that every finite set of points in $\mathcal{Y}$ is contained in an affine subset of $\mathcal{Y}$.
Therefore we show directly that $\mathcal{Z}$ is representable by using methods from the proof of \cite[Chapter 7.6, Theorem~4]{MR1045822}.

Note that if $U\subset \mathcal{Y}$ is open and $G$-invariant, then $\Res_{T/S}(U)^G$ is well defined, and moreover there is a natural map of functors
$\Res_{T/S}(U)^G\to\Res_{T/S}(\mathcal{Y})^G$, because $\Hom_T(W\times_S T,U)^G(W)$ is a subset of $\Hom_T(W\times_S T,\mathcal{Y})^G(W)$.
By \cite[Chapter 7.6, Proposition 2]{MR1045822} the morphism of functors $\Res_{T/S}(U)\to\Res_{T/S}(\mathcal{Y})$ is an open immersion.
As
\[
\Res_{T/S}(U)^G\cong \Res_{T/S}(\mathcal{Y})^G\times_{ \Res_{T/S}(\mathcal{Y})}\Res_{T/S}(U),
\]
and as open immersion are stable under base change,
$\Res_{T/S}(U)^G\to\Res_{T/S}(\mathcal{Y})^G$ is an open immersion.

Let $\cup U_i=\mathcal{Y}$ be a cover of $\mathcal{Y}$ by affine, $G$-invariant open subsets,
which exists, because the $G$-action on $\mathcal{Y}$ is good.
Consider any $U_i$.
As $T\to S$ is finite and flat, and $U_i$ is affine,
by \cite[Chapter 7.6, Theorem 4]{MR1045822},
$\Res_{T/S}(U_i)$ is represented by a scheme, and
by \cite[Proposition 3.1]{ MR1149171}, $(\Res_{T/S}(U_i))^G$ is represented by a subscheme of $\Res_{T/S}(U_i)$.
As ${\Res_{T/S}(\mathcal{Y}_L)^G\cong X}$ as functors,
$\Res_{T/S}(\mathcal{Y}_L)^G$ is representable by the scheme $X$.

If $U,V\subset \mathcal{Y}$ are two open, $G$-invariant subsets,
then $U\cap V\subset \mathcal{Y}$ is also an open, $G$-invariant subset,
and the open immersions $\Res_{T/S}(U\cap V)^G\to \Res_{T/S}(U)^G $ and $\Res_{T/S}(U\cap V)^G\to \Res_{T/S}(U)^G $
define a gluing data for $\Res_{T/S}(U)^G $ and $\Res_{T/S}(V)^G $.
On computes that in fact
\begin{equation} \label{xyz}
 \Res_{T/S}(U)^G \times_\mathcal{Z}\Res_{T/S}(V)^G=\Res_{T/S}(U\cap V)^G.
\end{equation}
Let $\mathcal{S}$ be the scheme constructed by gluing $X$ and the $\Res_{T/S}(U_i)^G$ as explained above.
We get a map $\iota:\mathcal{S}\to \mathcal{Z}$, because the gluing data is compatible with the open immersions $X\to \mathcal{Z}$ and $\Res_{T/S}(U_i)^G\to \mathcal{Z}$.
As $\iota\mid_X$ and the $\iota\mid_{\Res_{T/S}(U_i)^G}$ are open immersions, and equation (\ref{xyz}) holds pairwise for $X$ and the $\Res_{S/T}(U_i)^G$, $\iota$ is an open immersion.

We now want to show that $\iota$ is an equivalence of functors.
Considering the last paragraph of the proof of \cite[Chapter 7.6, Theorem 4]{MR1045822} it suffices to show the following.
For every field $F$ with a map $\Spec(F)\to S$ every $T$-morphism $f: \Spec(F)\times _S T\to \mathcal{Y}$ factors either through $X_L$ or through one of the $U_i$.
If $F$ lies over $K$, $f$ will factor through $X_L$. 
If $F$ lies over $k$, $f(\Spec(F)\times _S T)$ will only be a point topologically,
so $f$ will factor through every open neighborhood of that point.
As the $U_i$ cover $\mathcal{Y}$, there is a $U_i$ through which $f$ factors.

Altogether $\iota$ is an equivalence of functors, and therefore $\mathcal{Z}$ is represented by the $S$-scheme $\mathcal{S}$.

\medskip
Now we show that $\mathcal{Z}$ is a smooth $S$-scheme.
Note that it suffices to check smoothness locally.
By construction of the scheme representing $\mathcal{Z}$, 
every point in $\mathcal{Z}$ lies either in $X\cong \mathcal{Z}_K$, or in $\Res_{T/S}(U_i)^G$ for some $i$.
By assumption $X$ is smooth over $S$.
Moreover $\mathcal{Y}$ is smooth over $T$, i.e.~in particular the $U_i\subset \mathcal{Y}$ are smooth over $T$.
Hence by \cite[Chapter 7.6, Proposition 3]{MR1045822}, the $\Res_{T/S}(U_i)$ are smooth over $S$.
So by \cite[Proposition 3.4]{ MR1149171} the $\Res_{T/S}(U_i)^G$ are smooth over $S$.
Altogether $\mathcal{Z}$ is smooth over $S$.

We have seen that $X\cong \mathcal{Z}_K$, and $X$ is integral by assumption.
As $\mathcal{Z}$ is smooth over $S$, it is reduced, and flat over $S$, so there is no irreducible component only supported on the special fiber.
Altogether $\mathcal{Z}$ is integral.
This yields that $\mathcal{Z}\to S$ is a smooth model of $X$.

\medskip

\textit{Properties of $\Phi$.}
In order to show that $\Phi$ is separated,
take any valuation ring $R$ with quotient field $Q$, and any two morphisms $f_1,f_2\in \Hom(\Spec(R),\mathcal{Z})$ such that $f_1\mid _{\Spec(Q)}=f_2\mid _{\Spec(Q)}$,
and $\Phi \circ f_1=\Phi \circ f_2$.
Let $x\in \mathcal{X}$ be the image of the closed point of $R$.
As $R$ is a valuation ring, $\Phi \circ f_1=\Phi \circ f_2$ will factor through every open neighborhood of $x$, so we may assume
that it factors through $\Spec(A_i^G)\subset \mathcal{X}$ for some $G$-equivariant affine subset $U_i=\Spec(A_i)\subset \mathcal{Y}$.
Hence the $f_i$ factor through $\Phi^{-1}(\Spec(A_i^G))\cong \Res_{S/T}(U_i)^G$.
By \cite[Chapter 7.6, Proposition 5]{MR1045822} and \cite[Proposition 3.1]{ MR1149171}, $\Res_{S/T}(U_i)^G$ is separated over $S$,
hence $f_1=f_2$. So by the valuative criterion of separateness, $\Phi$ is separated.

\medskip

Now look at
$ \mathcal{Z}(S)=\Hom_T(T,\mathcal{Y})^G= \{ \sigma \in \Hom_T(T,\mathcal{Y})\mid g\circ \sigma \circ {g_T}^{-1}=\sigma\}$.
Let $\pi_T: T\to S$ be the quotient map induced by the Galois action on $T$.
Note that $\Phi(S): \mathcal{Z}(S)\to \mathcal{X}(S)$ maps $\sigma\in \mathcal{Z}(S)$ to
$\sigma_G\in \mathcal{X}(S)$ with $\sigma_G\circ \pi_T=\pi \circ \sigma$.

Take any $\sigma_G\in \mathcal{X}(S)$.
Let $\sigma_G': \mathcal{Y}\times_{\mathcal{X}}S\to \mathcal{X}$ be the pullback of $\sigma_G$,
$\pi':\pi^{-1}(S)\to S$ the pullback of $\pi$,
and
set $\varphi'\!:=\varphi \circ \sigma_G'$.
By the universal property of the fiber product, we have a one to one correspondence of sections $\sigma$ of $\varphi$ with ${\pi \circ \sigma =\sigma_G\circ \pi_T}$, and sections $\sigma'$ of $\varphi'$ with ${\pi ' \circ \sigma' = \pi_T}$.
Note that $\pi_T\circ \varphi'=\pi'$,
$\pi_T$ is separated,
and $\pi'$ is proper, because $\pi$ is a quotient map and hence proper.
So $\varphi'$ is proper by \cite[Proposition 12.58]{MR2675155}.
Hence without loss of generality we may assume that $\varphi$ is proper.

By assumption, $\mathcal{Y}_L\cong \mathcal{X}\times_S\Spec(L)$,
hence $\sigma_G$ induces a unique section $\sigma'$ of $\varphi\!\mid_{\mathcal{Y}_L}$ with $\pi \circ \sigma'=\sigma_G\circ \pi_T\!\mid_{\Spec(L)}$.
As $\varphi$ is proper, we get a unique section $\sigma$ of $\varphi$ with $\sigma\!\mid_{\Spec(L)}=\sigma'$.
As $S$ is separated, $\pi \circ \sigma = \sigma_G\circ \pi_T$.
We still need to show that $\sigma\in \mathcal{Z}(S)$, i.\thinspace e.~that $\sigma = g\circ \sigma \circ {g_T}^{-1}$.
Therefore one shows that $g\circ \sigma \circ {g_T}^{-1}$ is a section of $\varphi$ and $\pi \circ g\circ \sigma \circ {g_T}^{-1} = \sigma_G\circ \pi_T$,
and one concludes using the uniqueness of $\sigma$ with this properties.
Hence $\sigma$ is the unique element in $\mathcal{Z}(S)$ with $\Phi(S)(\sigma)=\sigma_G$,
i.e.~$\Phi (S)$ is bijective.

\medskip
\textit{Universal property.}
Now let $\mathcal{V}$ be a smooth, integral $S$-scheme and let ${\Psi:\mathcal{V}\to \mathcal{X}}$ be a dominant $S$-morphism.
Assume that there exists a $\Psi': \mathcal{V}\to \mathcal{Z}$ such that $\Phi \circ \Psi'=\Psi$.
As $\Psi$ is an $S$-morphism, it maps $\mathcal{V}_K$ to $X\cong \mathcal{X}_K$.
We have already seen that $\Phi\!\!\mid_{\mathcal{Z}_K}:\mathcal{Z}_K\to X$ is an isomorphism with inverse map ${\Phi\!\mid_{ \mathcal{Z}_K}}^{-1}$.
Therefore we have $\Psi'\!\!\mid_{\mathcal{V}_K}={\Phi\!\mid_{ \mathcal{Z}_K}}^{-1}\circ \Psi\!\!\mid_{\mathcal{V}_K}$.
As $\mathcal{V}_K$ is open and dense in $\mathcal{V}$, $\mathcal{V}$ is reduced, and $\mathcal{Z}$ is separated over $\mathcal{X}$, $\Psi'$ is unique on $\mathcal{V}$  by \cite[Corollary~9.9]{ MR2675155}.

Now we construct $\Psi'$. 
First we need to show some facts concerning $\mathcal{Y}$ and the $G$-action on $\mathcal{Y}$.
Consider $\mathcal{X}_T:=\mathcal{X}\times_S T$ and the following diagram.
\[
  \xymatrix{
\mathcal{Y} \ar@{-->}[rd]^{h}  \ar@/_5mm/[ddr]_{\varphi} \ar@/^5mm/[drr]^\pi \\
& \mathcal{X}_T \ar[r]^{p_{\mathcal{X}}} \ar[d]_{p_T} & \mathcal{X} \ar[d] \\
& T\ar[r]_{\pi_T} \ar@{}[ur]|{\square} & S
}
\]
Here $p_{\mathcal{X}}$ and $p_T$ are the projection maps.
Note that the diagram commutes, so there is a unique $h$  with $p_T\circ h= \varphi$ and $p_\mathcal{X}\circ h=\pi$.
As $p_\mathcal{X}$ and $\pi$ are finite, by \cite[Proposition 12.11]{ MR2675155} $h$ is finite, too.
As $\mathcal{X}$ is flat over $S$, the $G$-action on T induces a $G$-action on $\mathcal{X}_T$ such that $p_T$ is $G$-equivariant and $\mathcal{X}_T/G=\mathcal{X}$,
see \cite[Expos\'e V, Proposition 1.9]{MR0217087}.
As $\varphi$ and $p_T$ are $G$-equivariant, $h$ is $G$-equivariant, too.

Let $n: \mathcal{X}_T^n\to \mathcal{X}_T$ be the normalization.
By assumption, $\mathcal{Y}$ is integral and smooth over $T$, so in particular normal.
As $\mathcal{Y}_L=X_L=\mathcal{X}_L$, $h$ is generically an isomorphism, and therefore dominant.
So the universal property of normalization induces a unique morphism $s: \mathcal{Y}\to \mathcal{X}_T^n$ such that $n\circ s =h$.
Note that $s$ is finite, because $h$ and $n$ are finite,
and an isomorphism on $X_L\subset \mathcal{Y}$.
Altogether $s$ is a finite birational morphism between integral normal schemes. That means, by 
\cite[Corollary 12.88]{ MR2675155} it is an isomorphism.
So we may assume that $h=n$ and $\mathcal{Y}= \mathcal{X}_T^n$.

Back to $\mathcal{V}$ and $\Psi$. Consider the following cartesian diagram.
\[
  \xymatrix{
&\mathcal{V}_T \ar@{-->}_{\Psi_T}[dl]\ar[r]^{\pi_\mathcal{V}}  \ar[d]^{p} & \mathcal{V} \ar[d]^{\Psi} \\
 \mathcal{Y}\ar[r]_n&\mathcal{X}_T \ar[r]  \ar@{}[ur]|{\square} & \mathcal{X}
}
\]
with $\mathcal{V}_T\!:=\mathcal{V}\times _S T=\mathcal{V}\times_{\mathcal{X}} \mathcal{X}_T$, $\pi_{\mathcal{V}}$ and $p$ the projection maps.
As $\mathcal{V}$ is smooth over $S$, so in particular flat,
the $G$-action on T induces a $G$-action on $\mathcal{V}_T$ such that $\mathcal{V}_T\to T$ is $G$-equivariant
and $\mathcal{V}_T/G=\mathcal{V}$,
see \cite[Expos\'e~V, Proposition~1.9]{MR0217087}.
By construction $p$ is $G$-equivariant.

It might happen that $\mathcal{V}_T$ is not connected.
Let $\mathcal{V}_T=U_1\sqcup \dots \sqcup U_m$, with $U_i\subset \mathcal{V}_T$ the connected components.
As $\mathcal{V}=\mathcal{V}_T/G$ is connected, $G$ acts transitively on the connected components.
As $\Psi$ is dominant, the same holds for $p$.
Note that $\mathcal{X}_T$ is connected, because it is flat over $T$ and generically isomorphic to the $L$-variety $X_L$.
Hence there exists at least one component $U_i$ such that $p\!\!\mid_{U_i}$ is dominant.
As $G$ acts transitively on $\mathcal{V}_T$ and $p$ is $G$-equivariant, $p\!\!\mid _{U_j}$ is dominant for every component $U_j$.
By assumption $\mathcal{V}$ is smooth over $S$, so $\mathcal{V}_T$ is smooth over $T$.
Hence every component $U_i$ of $\mathcal{V}_T$ is normal.
So by the universal property of normalization there are unique morphisms $\Psi_T\!\mid_{U_i}: U_i\to \mathcal{Y}$ such that $n\circ \Psi_T\!\mid_{U_i}=p\!\mid_{U_i}$.
This defines a unique morphism $\Psi_T$ on all of $\mathcal{V}_T$ such that $n\circ \Psi_T=p$.
As $p$ and $n$ are $G$-equivariant,
$\Psi_T$ is $G$-equivariant, too.

Take any $W\in (\Sch/S)$, $f\in \mathcal{V}(W)$.
By the universal property of the fiber product, there is a unique $\tilde{f}\in \Hom_T(W\times_ST,\mathcal{V}_T)$ such that $f\circ p_W=\pi_{\mathcal{V}}\circ \tilde{f}$.

One checks easily that $\tilde{f}$ is $G$-equivariant.
Set $\Psi'(f):=\Psi_T\circ \tilde{f}$.
As $\Psi_T$ is $G$-equivariant,  $\Psi'(f)\in \mathcal{Z}(W)$.
It is easy to check that this defines a map of functors, so we obtain an $S$-morphism $\Psi\in \Hom_S(\mathcal{V}, \mathcal{Z})$.

We still need to check that $\Psi=\Phi \circ \Psi'$.
Therefore it suffices to check that for all $f\in \mathcal{V}(W)$, $W\in (\Sch/S)$ flat,
$\Psi(f)=\Phi \circ \Psi'(f)$ .
Note that the following diagram commutes.
\[
  \xymatrix{
  & & & \mathcal{V}\ar[ddd]^\Psi\\
 & \mathcal{V}_T \ar[r]^{\Psi_T} \ar@/^0,5pc/[rru]^{\pi_\mathcal{V}}& \mathcal{Y} \ar[d] \ar[ddr]^\pi  \\ 
W \ar@/^2,5pc/[rrruu]^f\ar[rrd] \ar@/_2pc/[rrrd]_{\Phi(\Psi'(f))}& {W\times_ST} \ar[u]^{\tilde{f}}\ar[ur]_{\!\!\Psi'(f)}\ar[r]\ar[l]_{p_W} & T \ar[d] \\
& & S & {\mathcal{X}} \ar[l]
}
\]
One observes that $\Phi(\Psi'(f))=\Psi \circ f=\Psi(f)$, which we wanted to show.

\medskip

We still need to check that $\mathcal{Z}$ is unique up to a unique isomorphism with its properties. Assume there is a $\mathcal{Z}'$ and a morphism $\Phi':\mathcal{Z}'\to \mathcal{X}$ having the same properties as $\mathcal{Z}$ and $\Phi$.
So we get unique morphisms $\alpha: \mathcal{Z}\to \mathcal{Z}'$ and $\alpha': \mathcal{Z}'\to \mathcal{Z}$ with $\Phi \circ \alpha'=\Phi'$ and $\Phi'\circ \alpha= \Phi$.
Note that $\Phi \circ (\alpha'\circ \alpha)= \Phi' \circ \alpha= \Phi$. But $\Id_\mathcal{Z}$ is unique with $\Phi\circ \Id_\mathcal{Z}=\Phi$, so $\alpha'\circ \alpha=\Id_\mathcal{Z}$. Similarly one gets $\alpha\circ \alpha'=\Id_{\mathcal{Z}'}$.
So $\alpha$ is the unique isomorphism over $\mathcal{X}$ of $\mathcal{Z}$ and $\mathcal{Z}'$.

\medskip

\textit{The case that $\mathcal{Y}$ is a weak N\'eron model.}
Assume that $\varphi: \mathcal{Y}\to T$ is a weak N\'eron model of $X_L$.
Hence $\varphi$ is separated, so by \cite[Expos\'e~V, Proposition~1.5]{MR0217087} $\mathcal{X}$ is separated over $S$.
As $\Phi$ is separated, $\mathcal{Z}\to S$ is separated, too.
Hence to show that $\mathcal{Z}\to S$ is a weak N\'eron model of $X$, we still need to show that
\begin{align*}
 \mathcal{Z}(S)=\Hom_T(T,\mathcal{Y})^G &\to \mathcal{Z}_K(K)=\Hom_L(\Spec(L),X_L)^G\cong X(K)
\end{align*}
is a bijection.
This map is injective, because $\mathcal{Y}$ is a separated $T$-scheme. Take any $\sigma'\in \mathcal{Z}_K(K)$.
As $\mathcal{Y}$ is a weak N\'eron model of $X_L$, 
$ \mathcal{Y}(T)\cong X_L(L)$,
so there is a $\sigma \in \Hom_T(T,\mathcal{Y}) $ with $\sigma \!\mid_{\Spec(L)}=\sigma'$.
As $g_T^{-1}$ maps $\Spec(L)$ to itself, we get 
$g \circ \sigma \circ g_T^{-1} \!\mid_{\Spec(L)}
\sigma \!\mid_{\Spec(L)}$.
As $\mathcal{Y}$ is a separated $T$-scheme, $g\circ \sigma \circ g_T^{-1}=\sigma$, i.\thinspace e.~$\sigma\in \mathcal{Z}(S)$.
Hence the map $\mathcal{Z}(S)\to \mathcal{Z}_K(K)$ is surjective.

\end{proof}

In \cite[Theorem 4.2]{ MR1149171} the following statement is proven:
\begin{thm*}
Let $L/K$ be a tame Galois extension, $\mathcal{O}_L$ the ring of integers of $L$, and ${T:=\Spec(\mathcal{O}_L)}$.
Let $X$ be an abelian variety over $K$.
Then there is a good ${\Gal(L/K)}$-action on the N\'eron model $\varphi: \mathcal{Y}\to T$ of $X_L$ extending the Galois action on $X_L$,
and $\mathcal{Z}\to S$ given by Construction \ref{construction Z} is the N\'eron model of  $X$.
\end{thm*}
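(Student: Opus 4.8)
The plan is to produce the $G$-action on the N\'eron model $\mathcal{Y}$ of $X_L$ and then to verify directly that $\mathcal{Z}$ satisfies the N\'eron mapping property over $S$; this reproduces \cite[Construction~2.4, Theorem~4.2]{MR1149171}.

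\emph{The $G$-action on $\mathcal{Y}$.} The N\'eron model of the abelian variety $X_L$ over $T$ is unique up to unique isomorphism and functorial in $X_L$. Fix $g\in G$ with image $g_T\in\Aut(T)$ --- an $S$-automorphism, since $G$ fixes $\mathcal{O}_K$ --- and let $g_L\in\Aut(\Spec(L))$ be its generic fibre. The Galois automorphism of $X_L$ lying over $g_L$ gives an isomorphism of $L$-schemes $X_L\xrightarrow{\sim} g_L^*X_L$; since $\mathcal{Y}\times_{T,g_T}T$ is the N\'eron model of $g_L^*X_L$, the N\'eron mapping property together with uniqueness upgrades this to an isomorphism $\mathcal{Y}\xrightarrow{\sim}\mathcal{Y}\times_{T,g_T}T$ of $T$-schemes, and composing with the first projection yields an automorphism of $\mathcal{Y}$ lying over $g_T$ and restricting to the Galois automorphism on the generic fibre. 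Uniqueness in the N\'eron mapping property forces these automorphisms to respect composition, so we obtain a $G$-action on $\mathcal{Y}$ extending the Galois action on $X_L$ and compatible with the Galois action on $T$. The action is good: a N\'eron model of an abelian variety over a discrete valuation ring is quasi-projective (it has finitely many connected components, each a translate of the quasi-projective identity component), and any finite subset of a quasi-projective scheme --- in particular any orbit of the finite group $G$ --- lies in an affine open.

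\emph{$\mathcal{Z}$ is the N\'eron model of $X$.} A N\'eron model is in particular a weak N\'eron model (taking $\mathcal{V}=T$ in the mapping property shows $\mathcal{Y}(\mathcal{O}_L)\to X_L(L)$ is bijective), so $\mathcal{Y}\to T$ is a weak N\'eron model of $X_L$ equipped with a good $G$-action extending the Galois action on $X_L$; hence Theorem~\ref{main theorem} applies and the scheme $\mathcal{Z}$ of Construction~\ref{construction Z} is a weak N\'eron model of $X$, in particular a smooth, separated, finite-type model of $X$ with $\mathcal{Z}_K\cong X$. It remains to check the full N\'eron mapping property. Let $\mathcal{V}$ be any smooth $S$-scheme; then $\mathcal{V}_T:=\mathcal{V}\times_S T$ is smooth over $T$ and $(\mathcal{V}_T)_L\cong(\mathcal{V}_K)_L$, and chaining the definition of the Weil restriction, the definition of the functor of fixed points (Definition~\ref{y^G}), the N\'eron mapping property of $\mathcal{Y}$ over $T$, and faithfully flat descent of morphisms along $\Spec(L)\to\Spec(K)$ gives
\[
\Hom_S(\mathcal{V},\mathcal{Z})=\Hom_T(\mathcal{V}_T,\mathcal{Y})^G=\Hom_L((\mathcal{V}_K)_L,X_L)^G=\Hom_K(\mathcal{V}_K,X),
\]
compatibly with restriction to generic fibres. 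Here the middle $G$-action $f\mapsto g\circ f\circ(\Id_{\mathcal{V}}\times g_T)^{-1}$ corresponds, under the N\'eron mapping property of $\mathcal{Y}$, to the semilinear Galois action on $\Hom_L((\mathcal{V}_K)_L,X_L)$ precisely because the $G$-action on $\mathcal{Y}\to T$ restricts to the Galois action on $X_L\to\Spec(L)$; its fixed points are the $G$-equivariant $L$-morphisms, which descend uniquely to $K$-morphisms $\mathcal{V}_K\to X$ since $X=X_L/G$. Thus $\mathcal{Z}$ is a smooth, separated, finite-type model of $X$ with the N\'eron mapping property, hence is the N\'eron model of $X$ by uniqueness.

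\emph{Main obstacle.} The delicate step is the $G$-equivariance in the displayed chain of bijections: one must match the geometrically defined $G$-action on $\Res_{T/S}(\mathcal{Y})$ with the semilinear Galois action on $\Hom_L((\mathcal{V}_K)_L,X_L)$ so that the fixed points compute exactly $\Hom_K(\mathcal{V}_K,X)$, and this is where the hypotheses that the $G$-action on $\mathcal{Y}$ extends the Galois action and is compatible with $G$ acting on $T$ enter essentially. A secondary technical point is goodness of the $G$-action on $\mathcal{Y}$, i.e.\ quasi-projectivity of the N\'eron model, which is needed in order to apply Theorem~\ref{main theorem}.
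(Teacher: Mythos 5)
Your proof is correct and follows precisely the route the paper attributes to \cite{MR1149171}: construct the $G$-action on $\mathcal{Y}$ via functoriality and uniqueness of N\'eron models, secure goodness via quasi-projectivity, and then verify the full N\'eron mapping property through the chain of $G$-equivariant identifications
\[
\Hom_S(\mathcal{V},\mathcal{Z})=\Hom_T(\mathcal{V}_T,\mathcal{Y})^G=\Hom_L((\mathcal{V}_K)_L,X_L)^G=\Hom_K(\mathcal{V}_K,X).
\]
Be aware that the paper itself does not reprove this statement --- it is quoted from \cite{MR1149171}, and only the strategy (universal property plus quasi-projectivity of the N\'eron model) is sketched --- so your reconstruction is exactly what is intended; the single minor deviation is that you invoke Theorem~\ref{main theorem} of the paper for representability and the weak N\'eron model property, whereas the original reference derives representability of $\Res_{T/S}(\mathcal{Y})$ directly from quasi-projectivity of $\mathcal{Y}$, but both paths are valid and compatible.
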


Note that the N\'eron model of an Abelian variety is uniquely determined by a universal property.
In \cite{ MR1149171} this universal property is used to show that $\mathcal{Z}$ is the N\'eron model of $X$.
As we do not have a universal property for weak N\'eron models in general, we had to use different methods to prove the universal property in Theorem~\ref{main theorem}.

Moreover, N\'eron models of Abelian Varieties are quasi-projective.
In Theorem~\ref{main theorem} we do not assume that $\mathcal{Y}$ is quasi-projective, which makes the proof of the representability of $\mathcal{Z}$ less straightforward.

\begin{rem}
 If we do not assume that $\varphi$ is smooth in Theorem \ref{main theorem}, we can modify Construction \ref{construction Z} by considering $\Sm(\mathcal{Y}/S)$, the smooth locus of $\mathcal{Y}$ over $S$, instead of $\mathcal{Y}$.
 This is well defined, because the $G$-action restricts to $\Sm(\mathcal{Y}/S)$.
We will get a smooth model of $X$ with an $S$-morphism $\Phi$ as in Theorem \ref{main theorem}.
 Note that the map $\Phi(S): \mathcal{Z}(S)\to \mathcal{X}(S)$ will be injective in this case, but in general not surjective.
 
 Nevertheless, if we assume that the smooth locus of $\mathcal{Y}$ over $S$ is a weak N\'eron model of $X_L$, the modified $\mathcal{Z}$ will be a weak N\'eron model of $X$.
 This is in particular the case if $\mathcal{Y}$ is regular and $\varphi$ is proper.
\end{rem}

\begin{rem}
If we do not assume in Theorem \ref{main theorem} that the Galois extension $L/K$ is tame, 
we cannot show that $\mathcal{Z}$ is smooth, because then \cite[Proposition 3.4]{ MR1149171} does not hold,
see \cite[Example 4.3]{ MR1149171}.
\end{rem}

\section{Local Studies}
\label{local studies}
\subsection{Main Lemma}

\begin{lem}\label{main lemma} 
Assumption and notation as in Theorem \ref{main theorem}.
Let $\mathcal{Y}^G$ be the fixed locus of the $G$-action on $\mathcal{Y}$.
Then there is a $k$-morphism
\[
b: \mathcal{Z}_k\to \mathcal{Y}^G
\]
such that for every point $y\in \mathcal{Y}^G$ with residue field $\kappa(y)$ 
the inverse image of $y$ is isomorphic to $\mathbb{A}^{m}_{\kappa(y)}$ as $\kappa(y)$-schemes for some $m\in \mathbb{N}$.
\end{lem}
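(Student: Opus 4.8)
\emph{Strategy.} The plan is to describe $\mathcal{Z}_k$ through its functor of points, to exhibit $b$ as a natural transformation (hence a morphism of schemes by Yoneda), and to compute the fibres by a direct calculation in the complete local ring of a fixed point, using the linearization of the $G$-action furnished by Lemma~\ref{ap1} and Lemma~\ref{ap2}. First, since $L/K$ is tame and the residue fields coincide, $L/K$ is totally ramified of degree $n:=[L:K]$ with $n$ prime to $\Char(k)$, and one may fix a uniformizer $\pi_L$ of $\mathcal{O}_L$ with $\pi_L^n=\pi_K$ a uniformizer of $\mathcal{O}_K$. Then $\mathcal{O}_L\otimes_{\mathcal{O}_K}k\cong k[t]/(t^n)$, where $t$ is the class of $\pi_L$, and, as $G$ is tame cyclic so that $\mu_n\subset K$, the fixed generator $g$ of $G$ acts on $k[t]/(t^n)$ by $t\mapsto\zeta t$ for a primitive $n$-th root of unity $\zeta$. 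Writing $T_0:=\Spec(k[t]/(t^n))$ (with the $G$-action $t\mapsto\zeta t$) and $\mathcal{Y}_0:=\mathcal{Y}\times_T T_0$ (with the induced action), one gets $W\times_S T=W\times_k T_0$ for any $k$-scheme $W$, and a $T$-morphism $W\times_S T\to\mathcal{Y}$ automatically factors through $\mathcal{Y}_0$; hence
\[
\mathcal{Z}_k(W)=\mathcal{Z}(W)=\Hom_T(W\times_S T,\mathcal{Y})^G=\Hom_{T_0}(W\times_k T_0,\mathcal{Y}_0)^G.
\]

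\emph{Construction of $b$.} The section $t=0$ gives a $G$-equivariant closed immersion $W\hookrightarrow W\times_k T_0$, the source carrying the trivial $G$-action. Precomposition with it sends a $G$-equivariant $T_0$-morphism $W\times_k T_0\to\mathcal{Y}_0$ to a $G$-equivariant $S$-morphism $W\to\mathcal{Y}$ (the composite $\mathcal{Y}\to S$ is $G$-invariant, so this is meaningful), i.e.\ to an element of $\mathcal{Y}(W)^G=\mathcal{Y}^G(W)$ in the sense of Definition~\ref{y^G}. Note that $\mathcal{Y}^G$ really is a $k$-scheme: its image in $T$ lies in $T^G$, which I would check equals $\Spec(\mathcal{O}_L/(\pi_L))$, the closed point of $T$ (because $g(\pi_L)-\pi_L=(\zeta-1)\pi_L$ and $\zeta-1$ is a unit in $\mathcal{O}_K$), so in fact $\mathcal{Y}^G=(\mathcal{Y}_k)^G$. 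The assignment above is functorial in $W$, so by Yoneda it defines a morphism of $k$-schemes $b\colon\mathcal{Z}_k\to\mathcal{Y}^G$.

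\emph{The fibres.} Fix $y\in\mathcal{Y}^G$ with residue field $\kappa:=\kappa(y)$; then $y\in\mathcal{Y}_k$. For a $\kappa$-algebra $R$, an $R$-point of $b^{-1}(y)$ amounts to a $G$-equivariant $\mathcal{O}_L$-algebra homomorphism $\mathcal{O}_{\mathcal{Y},y}\to R[t]/(t^n)$ sending $\pi_L$ to $t$ and reducing modulo $t$ to the homomorphism determined by $y$; as $t$ is nilpotent, this factors through $\widehat{\mathcal{O}}_{\mathcal{Y},y}$. Now I would invoke Lemma~\ref{ap1} and Lemma~\ref{ap2}: because $\mathcal{Y}\to T$ is smooth, $y$ is $G$-fixed, and $|G|$ is invertible in $\kappa$, the $G$-action on $\widehat{\mathcal{O}}_{\mathcal{Y},y}$ is linearizable, so that there are coordinates $x_1,\dots,x_d$ over the completed base, vanishing at $y$, with $g(x_i)=\zeta^{a_i}x_i$ for weights $a_i\in\mathbb{Z}/n$ and $g(\pi_L)=\zeta\pi_L$, and with $\mathcal{Y}^G$ locally cut out by $\{x_i : n\nmid a_i\}$. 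A homomorphism as above is then forced on the base part by $\mathcal{O}_L$-linearity together with the reduction condition, and must send each $x_i$ to a $g$-homogeneous element of $R[t]/(t^n)$ of weight $a_i$ that reduces to $0$. Since the $g$-invariants of $R[t]/(t^n)$ are the constants $R$, and for $a_i\not\equiv 0$ the weight-$a_i$ elements are exactly $R\cdot t^{\bar a_i}$ with $\bar a_i\in\{1,\dots,n-1\}$ representing $a_i$, this means $x_i\mapsto 0$ when $n\mid a_i$ and $x_i\mapsto c_i t^{\bar a_i}$ with $c_i\in R$ arbitrary when $n\nmid a_i$; moreover, because every occurring exponent $\bar a_i$ is $\geq 1$, any monomial in the $x_i$ of degree $\geq n$ maps to $0$, so each choice of the $c_i$ does extend, uniquely, to a ring homomorphism (the relevant power series truncate to polynomials, so no completeness of $R$ is needed). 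Hence $b^{-1}(y)(R)=R^m$ naturally in $R$, where $m=\#\{i : n\nmid a_i\}$, i.e.\ $b^{-1}(y)\cong\mathbb{A}^m_\kappa$.

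\emph{Main difficulty.} The crux is the last paragraph: producing a $G$-equivariant normal form for $\widehat{\mathcal{O}}_{\mathcal{Y},y}$ that is compatible both with the image of $\pi_L$ and with the normalization ``reduces to $y$'', and then verifying that the resulting weight bookkeeping computes the fibre scheme and not merely its points. This is precisely the role of Lemma~\ref{ap1} and Lemma~\ref{ap2}; everything else — the functorial identifications, the fact that $b$ lands in $\mathcal{Y}^G$, and that $T^G$ is the closed point of $T$ — is formal and rests on the tameness hypotheses already exploited in Theorem~\ref{main theorem}.
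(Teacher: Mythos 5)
Your proposal is correct and follows essentially the same route as the paper's proof: define $b$ as the functorial precomposition with the special-fibre inclusion $W\hookrightarrow W\times_S T$, reduce the computation of $b^{-1}(y)$ to a local calculation, linearize the $G$-action via Lemma~\ref{ap1}, and conclude via weight bookkeeping that the fibre is an affine space. The one organizational difference worth flagging is that the paper does not work in $\widehat{\mathcal{O}}_{\mathcal{Y},y}$ but rather shows that all maps in question factor through $\mathcal{O}_{\mathcal{Y},y}\otimes_{\mathcal{O}_{\mathcal{Y},y}^G}\kappa(y)$, which Lemma~\ref{ap2} identifies outright as $\kappa(y)[x_0,\dots,x_m]/\mathfrak{I}$ with a diagonal action; this framing sidesteps the issue that $\widehat{\mathcal{O}}_{\mathcal{Y},y}$ need not contain a $G$-invariant copy of $\kappa(y)$ (e.g.\ in mixed characteristic, or when $y$ is not closed), which your phrase ``coordinates over the completed base'' quietly glosses over, and it also makes the $\kappa(y)$-naturality of the identification $b^{-1}(y)(R)\cong R^m$ transparent.
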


\begin{proof}
As $L/K$ is a tame Galois extension, $G=\mathbb{Z}/r\mathbb{Z}$ with $r$ prime to $\Char(k)$.
Let the $G$-action on $\mathcal{Y}$ be given by $g\in \Aut(\mathcal{Y})$, and that on $T$ by $g_T\in \Aut(T)$.
By \cite[Proposition 3.1]{ MR1149171}, $\mathcal{Y}^G$ is a closed subscheme of $\mathcal{Y}$.
Take any $S$-scheme $W$, $f\in \mathcal{Y}^G(W)$. Then $g_T\circ \varphi \circ f= \varphi \circ g \circ f= \varphi \circ f$,
i.e.~$\varphi \circ f\in T^G(W)$ As $T^G=\Spec(k)$, 
$\mathcal{Y}^G$ is a closed subscheme of $\mathcal{Y}_k\subset \mathcal{Y}$, so in particular a $k$-scheme of finite type.

\medskip

To construct $b$, let $W\in (\Sch/k)$ be any $k$-scheme, $w:W\to \Spec(k)$ the structure map.
Recall the construction of $\mathcal{Z}$ in Construction \ref{construction Z}.
Set
\begin{align*}
b(W):\ \mathcal{Z}_k(W)=\Hom_T(W\times_ST,\mathcal{Y})^G&\to \mathcal{Y}^G(W);
 f\mapsto f\circ i_W
\end{align*}
with $i_W: W\hookrightarrow W\times_S T$ the inclusion of the special fiber.
By construction, $b(W)(f)\in \Hom_T(W,\mathcal{Y})$.
We have
$ g\circ f\circ i_W = f\circ (\Id_W\times g_T) \circ i_W=f\circ i_W$.
Here the first equation holds, because $f$ is $G$-equivariant, and
the second, because the action on the special fiber $i_W(W)\subset W\times_ST$ is trivial.
Hence $b(W)(f)\in \mathcal{Y}^G(W)$.
It is obvious that $b$ is functorial, so we get the required $k$-morphism.

\medskip

Let $y\in \mathcal{Y}^G$ be any point with residue field $\kappa(y)$,
$j_y: \Spec(\kappa(y))\hookrightarrow \mathcal{Y}^G\subset \mathcal{Y}$ be the immersion of the point $y$.
Note that $b^{-1}(y)$ is defined by the following cartesian diagram.
\[
  \xymatrix{
 b^{-1}(y) \ar[r] \ar@{}[dr]|\square \ar[d] & \mathcal{Z}_k \ar[d]^b \\
\Spec(\kappa (y))\ar[r]_{\ \ \ j_y}& \mathcal{Y}^G
}
\]
Take any affine $\kappa(y)$-scheme $W=\Spec(A)\in (\Sch/\kappa(y))$ with structure map ${\omega:W\to \Spec(\kappa(y))}$.
By the universal property of the fiber product we obtain
\begin{align*}
 b^{-1}(y)(W)&=\{f\in \mathcal{Z}_k(W)\mid b\circ f=j_y \circ \omega\}\\
&=\{f\in \Hom_T(W\times_ST,\mathcal{Y})^G\mid f\circ i_W=j_y \circ \omega\}.
\end{align*}
As $\mathcal{Y}^G$ is a subscheme of $\mathcal{Y}_k$, $W$ is a $k$-scheme with structure map ${\varphi \circ j_y\circ \omega}$.
Recall that $G$ acts on $\Hom_T(W\times_ST,\mathcal{Y})$ by sending $f\in \Hom_T(W\times_ST,\mathcal{Y})$ to $g\circ f\circ (\Id_W\times g_T)^{-1}$.
Set $R:=\mathcal{O}_L$. Hence $R^G=\mathcal{O}_K$.
We have
\begin{align*}
W\times_ST&=W \times _{\Spec(k)} \Spec(k)\times_{S} T
=W\times _{\Spec(k)} \Spec(k\otimes_{R^G}\!R)\\
&\cong\Spec(A\otimes_{k}k[t]/(t^r))
=\Spec(A[t]/(t^r)).
\end{align*}
To compute $k\otimes_{R^G}R$ we use Lemma \ref{ap2}.
This lemma also implies that
\[
 \alpha\!:=(\Id\times g_T)^\#:A[t]/(t^r)\to A[t]/(t^r);\ p(t)\mapsto p(\mu t)
\]
for a primitive $r$-th root of unity $\mu \in k\subset \kappa(y)$.
Note that 
\[
r_W:=i_W^\#:A[t]/(t^r)\to A;\ p(t)\mapsto p(0).
\]
One observes that $f$ sends all points in $\Spec(A[t]/(t^r))$ to $y\in \mathcal{Y}$, so it factors uniquely through $\Spec(\mathcal{O}_{\mathcal{Y},y})$,
i.\thinspace e.~there is a unique morphism 
$\tilde{f}$
such that the following diagram commutes.
\[
  \xymatrix{
&  \Spec({\mathcal{O}}_{\mathcal{Y},y})\ar[d]^{j}\\
\Spec(A[t]/(t^r)) \ar[r]_{\ \ \ \ \ \ \  f} \ar[ur]^{\tilde{f}} & \mathcal{Y}
}
\]
Let $r_y\!:=i_y^\#: {\mathcal{O}}_{\mathcal{Y},y} \to \kappa(y)$ be the residue map.
Note that $j\circ i_y=j_y$.
As $f\circ i_W = j_y \circ \omega$,
we get that $ j\circ \tilde{f} \circ i_W = j \circ i_y \circ \omega$.
The fact that $j$ is a monomorphism implies that $\tilde{f}\circ i_W = i_y \circ \omega$.

As $y$ lies in $\mathcal{Y}^G$, there is an induced $G$-action on $\Spec({\mathcal{O}}_{\mathcal{Y},y})$ given by some map $\tilde{g}\in \Aut(\Spec({\mathcal{O}}_{\mathcal{Y},y}))$ with $\tilde{g}^r=\Id$ and $\alpha_y\in \Aut({\mathcal{O}}_{\mathcal{Y},y})$ with $\alpha_y^r=\Id$, respectively,
such that $j$ is $G$-equivariant.
Hence $f=g\circ f \circ (\Id_W\times g_T)^{-1}$ implies that
\[
j\circ (\tilde{g}\circ \tilde{f} \circ (\Id_W \times g_T)^{-1})=g\circ j\circ \tilde{f} \circ (\Id_W \times g_T)^{-1}=f.
\]
As $\tilde{f}$ is unique with this property, $\tilde{g}\circ \tilde{f} \circ (\Id_W \times g_T)^{-1}=\tilde{f}$.\\
Assume that $\tilde{f}\in \Hom_T(\Spec(A[t]/(t^r)),\Spec({\mathcal{O}}_{\mathcal{Y},y}))$ with 
$\tilde{f}\circ i_W=i_y\circ \omega$, and $\tilde{g}\circ \tilde{f} \circ (\Id_W\times g_T)^{-1}=\tilde{f}$.
Then $f\!:=j\circ \tilde{f}\in \Hom_T(\Spec(A[t]/(t^r)),\mathcal{Y})$, and we have that
$g \circ f \circ (\Id_W \times g_T)^{-1}= f$
as well as
$ f\circ i_W=j_y \circ \omega$.
Altogether, we obtain
\begin{align*}
 b^{-1}(y)(W) =\{ & \tilde{f}\in  \Hom_T( \Spec(A[t]/(t^r)),\Spec({\mathcal{O}}_{\mathcal{Y},y})) \\
& \mid \tilde{f}\circ i_W=i_y\circ \omega \text{ and } \tilde{g}\circ \tilde{f} \circ (\Id_W\times g_T)^{-1}=\tilde{f}\}\\
=\{ & a\in \Hom_{R}({\mathcal{O}}_{\mathcal{Y},y}, A [t]/(t^r))\\
& \mid r_W\circ a = \omega^\# \circ r_y \text{ and } \alpha^{-1} \circ a \circ \alpha_y = a\}.
\end{align*}

\medskip

Now consider $\mathcal{O}_{\mathcal{Y},y}^G$. Note that,
as $\mathcal{O}_{\mathcal{Y},y}$ is an $R$-module, it contains lifts of all roots of unity and hence by Remark~\ref{rem not henselian}
we get that Remark \ref{rem ap2} and Lemma \ref{ap2} hold.
Let $i^G: {\mathcal{O}}_{\mathcal{Y},y}^G\hookrightarrow {\mathcal{O}}_{\mathcal{Y},y}$ be the inclusion,
and $r_y^G: {\mathcal{O}}_{\mathcal{Y},y}^G\to \kappa(y)$ the residue map.
We have $r_y^G=r_y\circ i^G$.
Consider the following diagram.
\begin{align}\label{keydiagram}
  \xymatrix{
&  & A[t]/(t^r)\\
{\mathcal{O}}_{\mathcal{Y},y}\ar@/^5mm/[urr]^a \ar[r]^/-4mm/{\rho_1} & {\mathcal{O}}_{\mathcal{Y},y}\otimes_{{\mathcal{O}}_{\mathcal{Y},y}^G}\!\!\! \kappa(y) \ar@{-->}[ru]^{\tilde{a}}\\
{\mathcal{O}}_{\mathcal{Y},y}^G\ar@{^(->}[u]^{i^G} \ar[r]_{r_y^G} & \kappa(y) \ar[u]_/-1mm/{\rho_2} \ar@/_5mm/[uur]_{i_0\circ \omega^\#}
}
\end{align}
Here $a\in b^{-1}(y)(W)$ as described before,
$\rho_1$ and $\rho_2$ are the morphisms we get from the definition of tensor product, and
$ i_0: A\to A[t]/(t^r); \ c\mapsto c$.
Note that $r_W\circ i_0=\Id$, and $i_0\circ r_W\!\mid_{i_0(A)}=\Id$.
One observes that for every $u\in {\mathcal{O}}_{\mathcal{Y},y}^G$, $(\alpha^{-1}\circ a) (u)=(\alpha^{-1}\circ a \circ \alpha_y)(u)=a(u)$.
Set $a(u)=\sum_{i=0}^{r-1}a_i t^i$ for some $a_i\in A$.
Hence $(\alpha^{-1}\circ a)(u)=\sum_{i=0}^{r-1}{\mu}^{-i}a_it^i$.
Comparing coefficients yields $a(u)=a_0$, i.\thinspace e.~$ a({\mathcal{O}}_{\mathcal{Y},y}^G)\subset i_0(A)$.
Using in addition that $r_y^G=r_y\circ i^G$ and $r_W\circ a=\omega^\# \circ r_y$, we obtain
\begin{align*}
 i_0\circ \omega^\# \circ r_y^G= i_0 \circ \omega^\#\circ r_y \circ i^G
 = i_0 \circ r_W \circ a \circ i^G
= i_0 \circ r_W\!\mid_{i_0(A)}\circ a \circ i^G
= a\circ i^G.
\end{align*}
Hence by the universal property of tensor product there is a unique $\tilde{a}$ such that diagram (\ref{keydiagram}) commutes.

Now, $G$ acts on $ {\mathcal{O}}_{\mathcal{Y},y}\otimes_{{\mathcal{O}}_{\mathcal{Y},y}^G}\!\!\! \kappa(y)$
given by
${\tilde{\alpha}_y\in\Aut( {\mathcal{O}}_{\mathcal{Y},y}\otimes_{{\mathcal{O}}_{\mathcal{Y},y}^G} \!\!\!\kappa(y))}$,
such that $\rho_1$ and $\rho_2$ are $G$-equivariant, see Lemma \ref{ap2}.
As $\alpha^{-1}\circ a\circ \alpha_y=a$, we get
\[
 (\alpha^{-1}\circ \tilde{a}\circ \tilde{\alpha}_y) \circ \rho_1 = \alpha^{-1}\circ \tilde{a}\circ \rho_1\circ {\alpha}_y = a 
\]
and, using that $G$ acts trivially on $i_0(A)$, we obtain
\begin{align*}
( \alpha^{-1}\circ \tilde{a}\circ \tilde{\alpha}_y) \circ \rho_2 &= \alpha^{-1} \circ \tilde{a}\circ \rho_2
= \alpha^{-1}\circ i_0 \circ \omega^\#=i_0 \circ \omega^\#.
\end{align*}
As $\tilde{a}$ is unique with these properties,
$\alpha^{-1}\circ \tilde{a}\circ \tilde{\alpha}_y=\tilde{a}$.

Denote by $\tilde{r}: k\otimes_{R^G}\!R\cong k[t]/(t^r) \to A \otimes_k k\otimes_{R^G}\! R\cong A[t]/(t^r)$ the canonical map given by the properties of the tensor product.
We have $\tilde{r}(t)=t$.
The $R$-structure of $A[t]/(t^r)$ is given by $\tilde{r}\circ \rho_R$, with $\rho_R: R\to k\otimes _{R^G}R$ the canonical map.
The $R$-structure of ${\mathcal{O}}_{\mathcal{Y},y}$ is given by $\beta_y:=(\varphi\circ j)^\#$.
As $a$ is an $R$-morphism, we obtain the following commutative diagram.
\begin{align}
\label{blubb}
  \xymatrix{
& k[t]/(t^r) \ar@{-->}[d]^{\tilde{\beta_y}} \ar[ld]_{\tilde{r}} & R\ar[d]^{\beta_y} \ar[l]_/-4mm/{\rho_R}\\
A[t]/(t^r) &  {\mathcal{O}}_{\mathcal{Y},y}\otimes_{{\mathcal{O}}_{\mathcal{Y},y}^G}\!\!\! \kappa(y) \ar[l]_/2mm/{\tilde{a}} & {\mathcal{O}}_{\mathcal{Y},y} \ar[l]_/-6mm/{\!\!\!\rho_1} \ar@/^5mm/[ll]^{a}
}
\end{align}
By Remark \ref{rem ap2}, $R^G\subset R$ is a local subring having the same residue field as $R$.
As $\beta_y$ is $G$-equivariant, it maps $R^G$ to ${{\mathcal{O}}_{\mathcal{Y},y}^G}$.

It is easy to check that the following diagram commutes.
 \[
  \xymatrix{
\kappa(y)\otimes_{{\mathcal{O}}_{\mathcal{Y},y}^G}\!\!\!{\mathcal{O}}_{\mathcal{Y},y}& {\mathcal{O}}_{\mathcal{Y},y}\ar[l]_{\ \ \ \ \ \ \ \rho_1}\\
\kappa(y)  \ar[u]^{\rho_2}& k\otimes_{R^G}\!R \ar@{-->}[ul]_/-5mm/{\tilde{\beta_y}}& R \ar[l]_/-3mm/{\rho_R} \ar[lu]_/-4mm/{\beta_y}\\
\ar@{}[drrr]_{} & k \ar@{_(->}[ul]\ar[u] & R^G\ar@{_(->}[u] \ar[l] \ar[rd]^/-2mm/{\!\!\!\beta_y\mid_{R^G}} \ar@{}[ul]|{\square}\\
 & & & {\mathcal{O}}_{\mathcal{Y},y}^G \ar@/_13mm/[uuull]_{i^G}\ar@/^12mm/[llluu]^{r_y^G}
}
\]
Hence the universal property of tensor product induces a unique $k$-morphism $\tilde{\beta_y}$ with 
$\tilde{\beta}_y\circ \rho_R =\rho_1 \circ \beta_y$.
Looking at diagram (\ref{blubb}) again, we get
\[
 \tilde{r} \circ \rho_R= a \circ \beta_y =\tilde{a} \circ \rho_1 \circ \beta_y = \tilde{a} \circ \tilde{\beta_y} \circ \rho_R.
\]
As $\rho_R$ is surjective, $\tilde{r}=\tilde{a} \circ \tilde{\beta_y}$, i.\thinspace e.~$\tilde{a}$ preserves the $k[t]/(t^r)$-structure
given by $\tilde{\beta}_y$
on ${{\mathcal{O}}_{\mathcal{Y},y}\otimes_{{\mathcal{O}}_{\mathcal{Y},y}^G}\!\!\! \kappa(y)}$,
and on $A[t]/(t^r)$ given by $\tilde{r}$.\\
Using the universal property of the tensor product, and that ${r_y\circ i^G=r_y^G}$, we get a unique morphism $\tilde{r}_y: {\mathcal{O}}_{\mathcal{Y},y}\otimes_{{\mathcal{O}}_{\mathcal{Y},y}^G}\!\!\! \kappa(y)\to \kappa(y)$,
such that ${\tilde{r}_y\circ \rho_1 =r_y}$ and $\tilde{r}_y\circ \rho_2=\Id$.
Using that $r_W\circ a=\omega^\# \circ r_y$, we get
\begin{align*}
 &(r_W \circ \tilde{a}) \circ \rho_1 = r_W\circ a \text{ and }(\omega^\# \circ \tilde{r}_y) \circ \rho_1 =\omega^\# \circ {r}_y = r_W\circ a, \\
&(r_W \circ \tilde{a}) \circ \rho_2 =r_W \circ i_0 \circ \omega^\# =\omega^\# \text{ and }(\omega^\# \circ \tilde{r}_y) \circ \rho_2=\omega^\#.
\end{align*}
Moreover,
$ r_W\circ a\circ i^G= r_W\circ i_0 \circ \omega^\# \circ r_y^G =\omega^\# \circ r_y^G$,
hence by the universal property of the tensor product there is a unique morphism $v:{\mathcal{O}}_{\mathcal{Y},y}\otimes_{{\mathcal{O}}_{\mathcal{Y},y}^G}\!\!\! \kappa(y) \to A$ such that
$v\circ \rho_1= r_W \circ a$ and $v\circ \rho_2= \omega^\#$, in particular
$ r_W\circ \tilde{a} =v =\omega^\# \circ \tilde{r}_y$,
meaning that $\tilde{a}$ is a $\kappa(y)$-morphism.

Note that for a given morphism 
$ \tilde{a}\in \Hom _{k[t]/(t^r)}({\mathcal{O}}_{\mathcal{Y},y}\otimes_{{\mathcal{O}}_{\mathcal{Y},y}^G} \!\!\!\kappa(y),A[t]/(t^r))$,
we have that $ a\!:=\tilde{a} \circ \rho_1 \in \Hom_R({\mathcal{O}}_{\mathcal{Y},y},A[t]/(t^r))$.
If $\alpha^{-1}\circ \tilde{a} \circ \tilde{\alpha}_y=\tilde{a}$, then
$ \alpha^{-1}\circ a \circ \alpha_y =a$.
If we assume furthermore that $r_W\circ \tilde{a} =\omega^\# \circ \tilde{r}_y$, then
$ r_W \circ a =  \omega^\# \circ r_y$.
So altogether
\begin{align*}
b^{-1}(y)&(W)=\{  \tilde{a}\in \Hom_{k[t]/(t^r)}({\mathcal{O}}_{\mathcal{Y},y}\otimes_{{\mathcal{O}}_{\mathcal{Y},y}^G} \kappa(y), A [t]/(t^r))\\
& \mid \tilde{a}\circ \rho_2=i_0 \circ \omega^\# \text{ and } r_W\circ \tilde{a} = \omega^\# \circ \tilde{r}_y \text{ and } \alpha^{-1} \circ \tilde{a} \circ \tilde{\alpha}_y = \tilde{a}\}.
\end{align*}
Note that $\tilde{a}\circ \rho_2=i_0 \circ \omega^\#$ is actually redundant.

\medskip

By Lemma \ref{ap2},
${\mathcal{O}}_{\mathcal{Y},y}\otimes_{{\mathcal{O}}_{\mathcal{Y},y}^G}\!\!\!\kappa(y)\cong \kappa(y)[ x_0,\dots, x_m]/\mathfrak{I}$,
\begin{align*}
\tilde{\alpha}_y(p(x_0,\dots,x_m))=p(\mu^{\ell_0}x_0,\dots,\mu^{\ell_{m}}x_{m})
\end{align*}
for $p(x_0,\dots,x_m) \in \kappa(y)[ x_0,\dots, x_m]/\mathfrak{I} $, $\mu\in k\subset \kappa(y)$ a primitive $r$-th root of unity,
$\ell_i\in \{1,\dots,r-1\}$, $m \in \mathbb{N}$, and
$\mathfrak{I}\subset \kappa(y)[x_0,x_1,\dots ,x_m]$ is the ideal generated by monomials of the form $x_0^{s_0}\dots x_m^{s_m}$ such that $\ell_0s_0+\dots + \ell_ms_m=rs$, $s\in \mathbb{N}$.

We now want to show that we can assume that $x_0=\tilde{\beta}_y(t)$.
By Lemma~\ref{ap1} there is a local parameter $t'\in R$ with $g_T^\#(t')=\mu t'$, $\mu \in R$ a primitive $r$-th root of unity.
Looking at the proof of Lemma \ref{ap2} we may assume that $\rho_R(t')=t$, i.e. ${\tilde{\beta_y}(t)=\tilde{\beta_y}\circ \rho_R(t')=\rho_1\circ \beta_y(t')}$.
So looking again at the proof of Lemma~\ref{ap2} it suffices to find a regular system of parameters $y_0,\dots,y_{m'}\in{\mathcal{O}}_{\mathcal{Y},y}$
such that ${\alpha_y (y_i)=\mu^{\ell_i}y_i}$, $\ell_i\in \{0,\dots, r-1\}$, $\ell_0\neq 0$, and $y_0={\beta}_y(t')=:\tilde{t}$.
Note that
$ \alpha_y(\tilde{t})=\beta_y(g_T^\#(t'))=\mu \tilde{t}$.
Hence using Lemma \ref{ap1} it suffices to show that $\tilde{t}\in \Em_y$, and $\tilde{t}\neq 0\mod \Em_y^2$ for the maximal ideal $\Em_y\subset {\mathcal{O}}_{\mathcal{Y},y}$.
As $y$ lies in $\mathcal{Y}^G\subset \mathcal{Y}_k$, $\tilde{t}\in \Em_y$.
Let $U=\Spec(C) \subset \mathcal{Y}$ be an affine neighborhood of $y$, $\mathfrak{p}\subset C$ the defining prime ideal of $y$. Choose a maximal ideal $\Em\subset C$ with $\mathfrak{p}\subset \Em$, let $y'$ be the corresponding closed point.
By \cite[Proposition 17.5.3]{ MR0238860}, ${\hat{\mathcal{O}}_{\mathcal{Y},y'}\cong R \Pol \tilde{y}_1,\dots, \tilde{y}_n\Por}$ as $R$-module.
So $\tilde{t}\neq 0\in {\hat{\mathcal{O}}}_{\mathcal{Y},y'}/ \!\Em^2 \cong {\mathcal{O}}_{\mathcal{Y},y'}/ \!\Em^2$.
As $\mathfrak{p}\subset \!\Em$, $\tilde{t}\neq 0\in {\mathcal{O}}_{\mathcal{Y},y'}/ {\mathcal{O}}_{\mathcal{Y},y'}\mathfrak{p}^2$.
As ${\mathcal{O}}_{\mathcal{Y},y'}/ {\mathcal{O}}_{\mathcal{Y},y'}\mathfrak{p}^2\subset {\mathcal{O}}_{\mathcal{Y},y}/\!\Em_y^2$ as $R$-modules, we have $\tilde{t}\neq 0 \mod \Em_y^2$.
This is what we wanted to show, so we may assume that $\tilde{\beta_y}$ is the $k$-morphism sending $t\in k[t]/(t^r)$ to $x_0$.

\medskip

Now chose any $\tilde{a}\in b^{-1}(y)(W)$. For $j\in \{1,\dots, m\}$ we have
\[
 \tilde{a}(x_j)=\sum_{i=0}^{r-1}a_{ij}t^i\in A[t]/(t^r)
\]
for some $a_{ij}\in A$.
Using $r_W \circ \tilde{a}=\omega^\# \circ \tilde{r}_y$, we obtain
\[
 a_{0j}=r_W(\tilde{a}(x_j))=\omega^\#(\tilde{r}_y (x_j))=\omega^\#(0)=0.
\]
From $\alpha^{-1} \circ \tilde{a} \circ \tilde{\alpha}=\tilde{a}$ we get
\[
\sum_{i=1}^{r-1} \mu^{\ell_j-i}a_{ij}t^i=(\alpha^{-1} \circ \tilde{a} \circ \tilde{\alpha}_y)(x_j)=\tilde{a}(x_j)=\sum_{i=1}^{r-1}a_{ij}t^i.
\]
Comparing coefficients yields that either $a_{ij}=0$, or $\mu^{\ell_j-i}=1$. As $i$ and $\ell_i$ lie in $\{1,\dots,r-1\}$, the latter is equivalent to $i=\ell_j$.
As $\tilde{a}$ preserves the $k[t]/(t^r)$-structure, i.\thinspace e.~$\tilde{a}\circ \tilde{\beta_y}=\tilde{r}$, we get that $\tilde{a}(x_0)=t$.
So using that $\tilde{a}$ is a $\kappa(y)$-morphism, i.\thinspace e.~that $\tilde{a}\circ \rho_2 =i_0\circ w^\#$, we get that
\begin{equation}\label{formula a}
\tilde{a}(p(x_0,x_1,\dots,x_{m}))=p(t,a_{1}t^{\ell_1},\dots,a_{m}t^{\ell_{m}}) 
\end{equation}
 for all $p(x_0,x_1,\dots, x_m)\in \kappa(y)[ x_0,x_1,\dots x_m]/\mathfrak{I}$, and for some $a_i\in A$.

Let $\tilde{a}: \kappa(y)[ x_0,x_1,\dots ,x_m]\to A[t]/(t^r)$ be defined by formula (\ref{formula a}).
For any generator $x_0^{s_0}x_1^{s_1}\dots x_m^{s_m}$ of $\mathfrak{I}$
\[
 \tilde{a}(x_0^{s_0}x_1^{s_1}\dots x_m^{s_m})=
a_1^{s_1}\dots a_m^{s_m}\
 t^{s_0+\ell_1s_1+\dots+\ell_ms_m}=t^{rs}=0\in A[t]/(t^r).
\]
This implies that $\mathfrak{I}\subset ker(\tilde{a})$.
Therefore, we get a unique well-defined map 
\[
{\tilde{a}: \kappa(y)[x_0,x_1,\dots,x_m]/ \mathfrak{I}\to A[t]/(t^r)}.
\]
Note that $\tilde{a}$ is a $\kappa(y)$-morphism, and preserves the $k[t]/(t^r)$-structure, and
one can check that $\alpha^{-1} \circ \tilde{a} \circ \tilde{\alpha}_y=\tilde{a}$, and $r_W\circ \tilde{a}=w^\#\circ \tilde{r}_y$.
Altogether, $\tilde{a}\in b^{-1}(y)(W)$ if and only if it is given by formula (\ref{formula a}).

\medskip

Now we are ready to construct a $\kappa(y)$-isomorphism 
$\beta: b^{-1}(y)\to \mathbb{A}^m_{\kappa(y)}$.
It suffices to give bijective, functorial maps
\[
\beta(W):b^{-1}(y)(W)\to \mathbb{A}^m_{\kappa(y)}(W)=\Hom_{\kappa(y)}(\kappa(y)[y_1,\dots,y_m],A)
\]
 for all affine $W=\Spec(A)\in (\Sch/\kappa(y))$.
Let $\beta (W)$ be the map which sends $\tilde{a}\in b^{-1}(W)$ given by formula (\ref{formula a})
to $a'\in \mathbb{A}^m_{\kappa(y)}(W)$ with
\begin{align*}
a':\ \kappa(y)[y_1,\dots,y_m]&\to A;
p(y_1,\dots,y_m)\mapsto p(a_1,\dots,a_m).
\end{align*}

This map is bijective, because there is an obvious inverse map.
It is easy to check that it is functorial.
\end{proof}

\begin{rem}
If we do not assume that the Galois extension $L/K$ is tame, then we cannot show Lemma \ref{main lemma}.
This is because Lemma \ref{ap1} is wrong in this case, see Example \ref{ex tame},
and hence we cannot show Lemma~\ref{ap2}, which is the main ingredient of the proof of Lemma \ref{main lemma}.
It would be very interesting to know what happens in the non-tame case.
\end{rem}

\begin{rem}
 Note that if we do not assume that $k=\bar{k}$, but that $L$ over $K$ is totally ramified, and that $k$ contains all $r$-th primitive roots of unity, one can still show Lemma \ref{main lemma}.\\
 There should also be no problem to replace $\mathcal{O}_K$ by a Henselian ring.
\end{rem}

\subsection{Sections of the Quotient}

\begin{cor}\label{sections of the quotient}
Assumptions and notation as in Theorem \ref{main theorem}.
Then ${\mathcal{X}(\mathcal{O}_K)=\emptyset}$ if and only if $\mathcal{Y}^G= \emptyset$.
If $\mathcal{Y}\to T$ is a weak N\'eron model of $X_L$, then $X(K)=\emptyset$ if and only if $\mathcal{Y}^G= \emptyset$.
\end{cor}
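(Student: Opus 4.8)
\textit{Proof plan.}
The plan is to reduce both statements to the question of whether the special fibre $\mathcal{Z}_k$ of the canonical model $\mathcal{Z}$ from Theorem \ref{main theorem} is empty, and then to read off that emptiness from the key Lemma \ref{main lemma}.

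First I would set up the chain of equivalences for the $\mathcal{O}_K$-points of the quotient. By Theorem \ref{main theorem} the morphism $\Phi\colon\mathcal{Z}\to\mathcal{X}$ induces a bijection $\mathcal{Z}(\mathcal{O}_K)\to\mathcal{X}(\mathcal{O}_K)$, so $\mathcal{X}(\mathcal{O}_K)=\emptyset$ if and only if $\mathcal{Z}(\mathcal{O}_K)=\emptyset$. Next, since $\mathcal{Z}\to S$ is smooth and $\mathcal{O}_K$ is Henselian, the specialization map $\mathcal{Z}(\mathcal{O}_K)\to\mathcal{Z}_k(k)$ is surjective by Remark \ref{points and models}; as it is also clear that any $\mathcal{O}_K$-point restricts to a $k$-point of $\mathcal{Z}_k$, we get $\mathcal{Z}(\mathcal{O}_K)=\emptyset$ if and only if $\mathcal{Z}_k(k)=\emptyset$. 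Finally, $k$ is algebraically closed and $\mathcal{Z}_k$ is a $k$-scheme of finite type, hence it has a $k$-rational (closed) point as soon as it is non-empty, so $\mathcal{Z}_k(k)=\emptyset$ if and only if $\mathcal{Z}_k=\emptyset$.

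It then remains to identify $\mathcal{Z}_k=\emptyset$ with $\mathcal{Y}^G=\emptyset$, and this is exactly what Lemma \ref{main lemma} provides: the $k$-morphism $b\colon\mathcal{Z}_k\to\mathcal{Y}^G$ shows that $\mathcal{Y}^G=\emptyset$ forces $\mathcal{Z}_k=\emptyset$, while conversely, for any point $y\in\mathcal{Y}^G$ the fibre $b^{-1}(y)\cong\mathbb{A}^m_{\kappa(y)}$ is non-empty, so $\mathcal{Y}^G\neq\emptyset$ forces $\mathcal{Z}_k\neq\emptyset$. Combining the two chains yields $\mathcal{X}(\mathcal{O}_K)=\emptyset$ if and only if $\mathcal{Y}^G=\emptyset$. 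For the second assertion I would invoke the last sentence of Theorem \ref{main theorem}: if $\mathcal{Y}\to T$ is a weak N\'eron model of $X_L$, then $\mathcal{Z}\to S$ is a weak N\'eron model of $X$, so by definition $\mathcal{Z}(\mathcal{O}_K)\to X(K)$ is a bijection; hence $X(K)=\emptyset$ if and only if $\mathcal{Z}(\mathcal{O}_K)=\emptyset$, which by the above holds if and only if $\mathcal{Y}^G=\emptyset$.

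I do not expect a genuine obstacle here: essentially all the work has been carried out in Theorem \ref{main theorem} and Lemma \ref{main lemma}. The only point that requires a little care is the bookkeeping among $\mathcal{O}_K$-points of $\mathcal{Z}$, $k$-points of the special fibre $\mathcal{Z}_k$, and non-emptiness of $\mathcal{Z}_k$ as a scheme; this rests precisely on the smoothness of $\mathcal{Z}\to S$ (for surjectivity of specialization) and on $k$ being algebraically closed (so that a non-empty finite-type $k$-scheme has a $k$-point), together with the fact that the fibres of $b$ are affine spaces and hence never empty.
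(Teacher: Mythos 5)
Your proposal is correct and follows the same approach as the paper: use Theorem \ref{main theorem} to pass to $\mathcal{Z}(\mathcal{O}_K)$, then smoothness plus Henselianness plus $k=\bar{k}$ to reduce to non-emptiness of $\mathcal{Z}_k$, then Lemma \ref{main lemma} (the fibres of $b$ being affine spaces, hence $b$ is surjective) to identify this with non-emptiness of $\mathcal{Y}^G$, and finally the weak N\'eron model property for the second claim. The paper phrases the last step more compactly as "$b$ is surjective", while you spell out both directions; the content is the same.
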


\begin{proof}
By Theorem \ref{main theorem}, $\mathcal{X}(\mathcal{O}_K)\cong \mathcal{Z}(\mathcal{O}_K)$.
As $\mathcal{Z}\to S$ is smooth and $\mathcal{O}_K$ is Henselian, $\mathcal{Z}(\mathcal{O}_K)=\emptyset$ if and only if $\mathcal{Z}_k(k)=\emptyset$ by \cite[Chapter 2.3, Proposition 5]{MR1045822}.
As $k$ is algebraically closed this is equivalent to $\mathcal{Z}_k=\emptyset$.
By Lemma \ref{main lemma} there is a surjective morphism $b:\mathcal{Z}_k\to \mathcal{Y}^G$,
so $\mathcal{Z}_k=\emptyset$ if and only if $\mathcal{Y}^G=\emptyset$. 

If $\mathcal{Y}\to T$ is a weak N\'eron model of $X_L$, $\mathcal{Z}\to S$ is a weak N\'eron model of $X$ by Theorem \ref{main theorem},
i.e.~in particular $\mathcal{Z}(\mathcal{O}_K)\cong X(K)$, which implies the second claim.
\end{proof}

Now we show one direction of the equivalence in Corollary \ref{sections of the quotient} without using $\mathcal{Z}$, because
this alternative proof yields an explicit construction of a section of the model $\mathcal{X}\to S$ through the image of a fixed point.

\begin{prop} \label{construction}
Assumptions and notation as in Theorem \ref{main theorem}.
If $\mathcal{Y}^G\neq \emptyset$, then $\mathcal{X}(\mathcal{O}_K)\neq \emptyset$.
\end{prop}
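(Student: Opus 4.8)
The plan is to build an $\mathcal{O}_K$-point of $\mathcal{X}$ by hand, passing through the image of a fixed point, rather than going through $\mathcal{Z}$. The strategy has three steps: first produce a $G$-fixed $k$-point $y$ of $\mathcal{Y}$; then construct a $G$-equivariant section $\sigma\colon T\to\mathcal{Y}$ of $\varphi$ through $y$, by writing down an explicit homomorphism out of the completed local ring at $y$; and finally descend $\pi\circ\sigma$ along the quotient map $\pi_T\colon T\to S=T/G$ to get a section of $\mathcal{X}\to S$.

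For the first step, recall from the opening paragraph of the proof of Lemma~\ref{main lemma} that $\mathcal{Y}^G$ is a closed subscheme of $\mathcal{Y}_k$, hence a $k$-scheme of finite type; since it is non-empty and $k$ is algebraically closed it has a closed point $y$ with $\kappa(y)=k$. Because $y$ lies in $\mathcal{Y}^G$, the chosen generator of $G$ induces an automorphism $\alpha_y$ of $\mathcal{O}_{\mathcal{Y},y}$, and hence of $\hat{\mathcal{O}}_{\mathcal{Y},y}$, which is compatible, via the $G$-equivariant structure map $\varphi$, with the Galois action on $\mathcal{O}_L$ (as in Remark~\ref{Galois action} and Remark~\ref{XL to X}).

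For the second step I would argue exactly as in the proof of Lemma~\ref{main lemma}: using \cite[Proposition 17.5.3]{MR0238860} together with Lemma~\ref{ap1} (which applies since $\mathcal{O}_{\mathcal{Y},y}$, being an $\mathcal{O}_L$-algebra, contains lifts of all $r$-th roots of unity), we may choose an isomorphism of $\mathcal{O}_L$-algebras $\hat{\mathcal{O}}_{\mathcal{Y},y}\cong\mathcal{O}_L[\![y_1,\dots,y_n]\!]$ in which the coefficient subring $\mathcal{O}_L$ is the $G$-stable image of the structure map, on which $G$ acts through the Galois action, and such that the generator of $G$ sends $y_i$ to $\mu^{\ell_i}y_i$ for a primitive $r$-th root of unity $\mu$ and integers $\ell_i\in\{0,\dots,r-1\}$. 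Let $\sigma^{\#}\colon\hat{\mathcal{O}}_{\mathcal{Y},y}\to\mathcal{O}_L$ be the evaluation map $y_i\mapsto 0$ that is the identity on $\mathcal{O}_L$. It is a local homomorphism, a section of the structure map $\mathcal{O}_L\to\hat{\mathcal{O}}_{\mathcal{Y},y}$, and $G$-equivariant: on each $y_i$ both $g_T^{\#}\circ\sigma^{\#}$ and $\sigma^{\#}\circ\alpha_y$ give $0$, while on $\mathcal{O}_L$ both give the Galois automorphism $g_T^{\#}$. Composing $\Spec(\mathcal{O}_L)\to\Spec(\hat{\mathcal{O}}_{\mathcal{Y},y})\to\mathcal{Y}$ then yields a $G$-equivariant morphism $\sigma\colon T\to\mathcal{Y}$ with $\varphi\circ\sigma=\Id_T$, i.e. a $G$-equivariant section of $\varphi$ through $y$.

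For the last step, since $\pi\colon\mathcal{Y}\to\mathcal{X}=\mathcal{Y}/G$ is $G$-invariant and $\sigma$ is $G$-equivariant, the composite $\pi\circ\sigma\colon T\to\mathcal{X}$ is invariant for the $G$-action on $T$; as $S=\Spec(\mathcal{O}_L^G)=T/G$ is the categorical quotient, $\pi\circ\sigma$ factors uniquely as $\sigma_G\circ\pi_T$ for a unique $S$-point $\sigma_G\colon S\to\mathcal{X}$. Writing $q\colon\mathcal{X}\to S$ for the structure map and using $q\circ\pi=\pi_T\circ\varphi$, we get $q\circ\sigma_G\circ\pi_T=\pi_T\circ\varphi\circ\sigma=\pi_T$, and since $\pi_T$ is an epimorphism (it is induced by the injection $\mathcal{O}_K\hookrightarrow\mathcal{O}_L$ and is surjective on underlying spaces) this forces $q\circ\sigma_G=\Id_S$. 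Hence $\sigma_G\in\mathcal{X}(\mathcal{O}_K)$, so $\mathcal{X}(\mathcal{O}_K)\neq\emptyset$, and by construction $\sigma_G$ passes through the image of the fixed point $y$ (which is in general a singular point of $\mathcal{X}$). The main obstacle is the second step, namely the $G$-equivariant Cohen-type description of $\hat{\mathcal{O}}_{\mathcal{Y},y}$ with the variables as semi-invariants and the coefficient ring identified with the image of the structure map; this is precisely the content of Lemma~\ref{ap1}, and is essentially already carried out inside the proof of Lemma~\ref{main lemma}, so in the write-up it can largely be cited from there. The two remaining steps are formal.
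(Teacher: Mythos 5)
Your proposal is correct and is essentially the same argument as the paper's own proof of Proposition~\ref{construction}: pick a closed fixed point $y$, use \cite[Proposition 17.5.3]{MR0238860} together with Lemma~\ref{ap1} to obtain a $G$-equivariant Cohen-type presentation $\hat{\mathcal{O}}_{\mathcal{Y},y}\cong\mathcal{O}_L[\![x_1,\dots,x_n]\!]$ with semi-invariant coordinates and $G$-stable coefficient ring, define a $G$-equivariant retraction by killing the variables, and descend the resulting $G$-equivariant section of $\varphi$ through the quotient $\pi_T\colon T\to S$ to a section of $\mathcal{X}\to S$. The only cosmetic difference is that the paper phrases the retraction as the quotient by the ideal $I=(x_1,\dots,x_n)$ after observing $\alpha_y(I)\subset I$, while you present it directly as the evaluation $x_i\mapsto 0$; these are the same map.
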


\begin{proof}
As $\mathcal{Y}^G\neq \emptyset$, there is a closed fixed point $y\in \mathcal{Y}$.
Note that $G$ acts on $\Spec(\hat{ {\mathcal{O}}} _{\mathcal{Y},y})$ given by some $\alpha_y\in \Aut(\hat{ {\mathcal{O}}} _{\mathcal{Y},y})$ with $\alpha_y^r=\Id$,
such that the natural map $j: \Spec(\hat{ {\mathcal{O}}} _{\mathcal{Y},y}) \to \mathcal{Y}$ is $G$-equivariant.
As $L/K$ is a tame Galois extension, $G$ is a cyclic group of order prime to $\Char(k)$, so by Lemma \ref{ap1},
$G$ acts on $R:=\mathcal{O}_L$ given by some $\alpha_R \in \Aut(R)$ sending a generator $t$ of the maximal ideal in $R$ to $\mu t$, with $\mu \in R$ a primitive $r$-th root of unity.
Note that $\hat{\mathcal{O}}_{\mathcal{Y},y}$ is an $R$-module via $\beta_y:=(\varphi \circ j)^\#$,
and $\beta_y$ is $G$-equivariant.
As $\varphi$ is smooth, and the residue field of $R$ is equal to the residue field of $\hat{\mathcal{O}}_{\mathcal{Y},y}$,
\cite[Proposition~17.5.3]{ MR0238860} implies that $\hat{\mathcal{O}}_{\mathcal{Y},y}\cong R\Pol \tilde{x}_1,\dots,\tilde{x}_n\Por$ as $R$-module for some $ \tilde{x}_1,\dots,\tilde{x}_n \in\hat{\mathcal{O}}_{\mathcal{Y},y}$.
Note that $t,\tilde{x}_1, \dots, \tilde{x}_n$ form a regular system of parameters of $\hat{\mathcal{O}}_{\mathcal{Y},y}$.
As $\alpha_y(t)=\alpha_R(t)=\mu t$, by Lemma \ref{ap1} we may choose a system of parameters $x_0,\dots,x_n$ with $\alpha_y(x_i)=\mu^{\ell_i}x_i$ for some $\ell_i\in \mathbb{N}$, such that $x_0=t$.
So $\hat{\mathcal{O}}_{\mathcal{Y},y}\cong R\Pol\tilde{x}_1,\dots ,\tilde{x}_n \Por \cong R\Pol x_1,\dots, x_n\Por$ as $R$-modules.
Let $I\subset \hat{\mathcal{O}}_{\mathcal{Y},y}$ be the ideal generated by $x_1,\dots, x_n$.
Note that $\alpha_y ( I) \subset I$. So the quotient map 
\[
 \hat{\sigma}: \hat{\mathcal{O}}_{\mathcal{Y},y}\to \hat{\mathcal{O}}_{\mathcal{Y},y}/I=R\Pol x_1,\dots, x_n \Por/(x_1,\dots, x_n)\cong R
\]
is a $G$-equivariant retraction of $\beta_y$.
Therefore $\hat{\sigma}^\#$ is a section of $\varphi \circ j$, and $\sigma:=j\circ \hat{\sigma}^\#$ is a section of $\varphi$.
As both $\hat{\sigma}$ and $j$ are $G$-equivariant, the same holds for $\sigma$.

Let the $G$-action on $T$ be given by $g_T\in \Aut(T)$, that on $\mathcal{Y}$ by $g\in \Aut(\mathcal{Y})$.
Let $\pi: \mathcal{Y}\to \mathcal{X}$ and $\pi_T : T\to S$ be the quotients.
Let $\varphi_G : \mathcal{X}\to S$ be the structure map of $\mathcal{X}$ as $S$-scheme.
Every element in $\mathcal{X}(\mathcal{O}_K)$ is given by a section of $\varphi_G$.
As $\sigma$ is $G$-invariant and $\pi$ is a quotient map, $\pi \circ \sigma \circ g_T =\pi \circ g \circ \sigma =\pi \circ \sigma$.
So by the universal property of the quotient $\pi_T:T\to S$, there exists a unique $\sigma_G: S \to \mathcal{X}$ such that $\pi \circ \sigma = \sigma_G \circ \pi_T$.
Furthermore, 
\[
\varphi_G \circ \sigma_G \circ \pi_T= \varphi_G \circ \pi \circ \sigma = \pi_T \circ \varphi \circ \sigma= \pi_T \circ id_T = \pi_T.
\]
As $\pi_T$ is an epimorphism, $\varphi_G \circ \sigma_G = id_S$, i.e. $\sigma_G$ is a section of $\varphi_G$.
\end{proof}

Note that  the image of a closed fixed point $y\in \Sm(\mathcal{Y}/T)^G$ in $\mathcal{X}$ is a singular point in general,
so in fact we construct sections through singular points.
Here is an example for such a section through a singular point.

\begin{ex}
\label{exsec}
Assumptions and notation as in Example \ref{the example}.
The closed point $Q=(0,0)\in\mathcal{Y} = \mathbb{A}^1_{k\Pol t \Por}$ is fixed, and
the $k\Pol s \Por$-scheme 
\[
\mathcal{Y}/G \cong \Spec(k\Pol s\Por [b,c]/(sc-b^2))
\]
is singular in the image $Q'=(0,0,0)$ of $Q$ under the quotient map.
The proof of Proposition \ref{construction} implies that there is a section $\sigma_G$ of $\mathcal{Y}/G \to \Spec (k\Pol s \Por)$ through $Q'$.
Such a section is for example given by 
\[
 {\sigma_G}^\#(P(s,b,c))=P(s,0,0)\in k\Pol s\Por.
\]
Note that the $G$-equivariant section $\sigma$ of $\mathcal{Y}\to \Spec(k \Pol t \Por)$ which descends to $\sigma_G$ is given by
$ \sigma^\#(P(t,x))=P(t,0)$.
\end{ex}

\section{Motivic Invariants}
\subsection{Motivic Serre Invariant}

\begin{defn}
The \emph{Grothendieck group of $k$-varieties} $K_0(\Var_k)$ is defined to be the abelian group with
\begin{itemize}
 \item
generators: isomorphism classes $[U]$ of separated $k$-schemes $U$ of finite type
  \item 
relations: $[U]=[U\setminus V]+[V]$ for every closed immersion $V\hookrightarrow U$ (\emph{scissor relations})
\end{itemize}
The product $[U][V]=[U\times_{\Spec (k)}V]$ defines a ring structure on $K_0(\Var_k)$. We call this ring the \emph{Grothendieck ring of $k$-varieties}.\\
Set $\mathbb{L}\!:=[\mathbb{A}^1_k]$.

The \emph{modified Grothendieck ring of $k$-varieties} $K_0^{\Bla}(\Var_k)$
is the quotient of $K_0(\Var_k)$ by the ideal generated by elements
\[
 [U]-[V]
\]
where $U$ and $V$ are separated $k$-schemes of finite type such that there exists a finite, surjective, purely inseparable $k$-morphism $U\to V$.\\
We still denote the image of $\mathbb{A}^1_k$ in $K_0^{\Bla}(\Var_k)$ by $\mathbb{L}$.
\[
K_0^{\mathcal{O}_K}(\Var_k) \!:=
\begin{cases}
K_0(\Var_k) &  \text{if }\mathcal{O}_K\text{ has equal characteristic}\\
 K_0^{\Bla}(\Var_k) &  \text{if }\mathcal{O}_K\text{ has mixed characteristic}
\end{cases}
\]
\end{defn}

\begin{defn}
\label{def serre invariant}
Let $X$ be a smooth $K$-variety with weak N\'eron model $\mathcal{X}\to S$. Then the \emph{motivic Serre invariant} $S(X)$ is defined by
\[
 S(X)\!:=[\mathcal{X}_k]\in K_0^{\mathcal{O}_K}(\Var_k)/(\mathbb{L}-1).
\]
By \cite[Proposition-Definition 3.6]{000} this definition does not depend on the choice of a weak N\'eron model.
\end{defn}

\begin{rem}
\label{S=0}
Let $X$ be a smooth, separated $K$-variety without $K$-rational point.
Then $S(X)=0$.
This holds, because in this case $X$ viewed as an $S$-scheme is a weak N\'eron model of $X$, i.\thinspace e.~the special fiber of this weak N\'eron model is empty.
Hence if $S(X)\neq 0$, then $X$ has a $K$-rational point.
\end{rem}

\begin{thm}\label{tolle idee}
Let $X$ be a smooth, proper $K$-variety.
Let $L/K$ be a tame Galois extension, $\mathcal{O}_L$ the ring of integers of $L$, $T:=\Spec(\mathcal{O}_L)$.
Let $\varphi:\mathcal{Y}\to T$ be a weak N\'eron model of ${X}_L$ with a good
$G\!:=\Gal(L/K)$-action  extending the Galois action on $X_L$. Then
\[
S(X)=[\mathcal{Y}^G]\in K_0^{\mathcal{O}_K}(\Var_k)/(\mathbb{L}-1).
\]
\end{thm}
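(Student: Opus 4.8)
The plan is to reduce the statement to a computation of the class $[\mathcal{Z}_k]$ for the explicit weak N\'eron model $\mathcal{Z}$ produced in Theorem~\ref{main theorem}, and then to evaluate that class using the fibration $b$ of Lemma~\ref{main lemma} together with a dévissage in the Grothendieck ring.

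First I would apply Theorem~\ref{main theorem}: since $L/K$ is tame, $X$ is smooth, and $\varphi:\mathcal{Y}\to T$ is a weak N\'eron model of $X_L$ carrying a good $G$-action extending the Galois action, that theorem produces the $S$-scheme $\mathcal{Z}\to S$ of Construction~\ref{construction Z}, which is a weak N\'eron model of $X$. Hence, by Definition~\ref{def serre invariant} (and the independence of $S(X)$ of the chosen weak N\'eron model recalled there), $S(X)=[\mathcal{Z}_k]$ in $K_0^{\mathcal{O}_K}(\Var_k)/(\mathbb{L}-1)$, so the whole problem becomes showing
\[
[\mathcal{Z}_k]=[\mathcal{Y}^G]\quad\text{in } K_0^{\mathcal{O}_K}(\Var_k)/(\mathbb{L}-1).
\]

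Next I would invoke Lemma~\ref{main lemma}, which gives a morphism $b:\mathcal{Z}_k\to\mathcal{Y}^G$ of finite type $k$-schemes such that for every point $y\in\mathcal{Y}^G$ the fibre $b^{-1}(y)$ is isomorphic, as a $\kappa(y)$-scheme, to $\mathbb{A}^{m(y)}_{\kappa(y)}$ for some $m(y)\in\mathbb{N}$; in particular every fibre is non-empty, so $b$ is surjective. The desired equality then follows from the general \emph{claim}: if $f:Y\to Z$ is a morphism of finite type $k$-schemes all of whose fibres are affine spaces, then $[Y]=[Z]$ in $K_0^{\mathcal{O}_K}(\Var_k)/(\mathbb{L}-1)$. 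I would prove this by Noetherian induction on $Z$. Let $\eta$ be the generic point of an irreducible component of $Z$, so $Y_\eta\cong\mathbb{A}^m_{\kappa(\eta)}$ for some $m$. Writing $\kappa(\eta)$ as the filtered union of the rings of functions of affine opens of $Z$ through $\eta$, and using that both $Y\times_Z U$ and $\mathbb{A}^m_U$ are finitely presented over $U$, a standard spreading-out argument yields a non-empty affine open $U\subseteq Z$ inside that component with $Y\times_Z U\cong\mathbb{A}^m_U$. Then by the scissor relations and $[\mathbb{A}^m_U]=\mathbb{L}^m[U]$,
\[
[Y]=\mathbb{L}^m[U]+[Y\times_Z(Z\setminus U)],
\]
and since $Z\setminus U$ is a proper closed subscheme of $Z$ over which $f$ still has affine-space fibres, the inductive hypothesis gives $[Y\times_Z(Z\setminus U)]\equiv[Z\setminus U]$, whence, using $\mathbb{L}^m\equiv 1$, one gets $[Y]\equiv[U]+[Z\setminus U]=[Z]$ modulo $\mathbb{L}-1$. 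The same argument goes through verbatim in the modified ring $K_0^{\Bla}(\Var_k)$ relevant to mixed characteristic, as no inseparable morphisms are introduced. Applying the claim to $b$ gives $[\mathcal{Z}_k]=[\mathcal{Y}^G]$, hence $S(X)=[\mathcal{Y}^G]$.

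The genuinely paper-specific content, namely identifying $\mathcal{Z}_k$ with the special fibre of a weak N\'eron model of $X$ and producing the fibration $b$, is supplied by Theorem~\ref{main theorem} and Lemma~\ref{main lemma}. The one delicate point is the claim: Lemma~\ref{main lemma} only asserts that each fibre of $b$ is \emph{some} affine space, with $m(y)$ possibly jumping along $\mathcal{Y}^G$, so one has to pass through a finite stratification of $\mathcal{Y}^G$ into pieces over which $b$ is an honest trivial $\mathbb{A}^m$-bundle (so that $[\mathbb{A}^m_U]=\mathbb{L}^m[U]$ may be invoked) before the reduction $\mathbb{L}\mapsto 1$ absorbs the dependence of $m$ on the stratum. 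This is where I expect the bulk of the (otherwise routine) work to lie.
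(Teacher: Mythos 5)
Your proposal is correct and follows essentially the same route as the paper: invoke Theorem~\ref{main theorem} to identify $S(X)=[\mathcal{Z}_k]$, invoke Lemma~\ref{main lemma} to obtain $b:\mathcal{Z}_k\to\mathcal{Y}^G$ with affine-space fibres, then stratify $\mathcal{Y}^G$ via spreading out at generic points and conclude by scissor relations and $\mathbb{L}\equiv1$. The only cosmetic differences are that you package the dévissage as a self-contained claim and argue the mixed-characteristic case by noting no inseparable morphisms arise, whereas the paper simply observes $K_0^{\mathcal{O}_K}(\Var_k)/(\mathbb{L}-1)$ is a quotient of $K_0(\Var_k)/(\mathbb{L}-1)$ and so works there, and uses the smoothness of $\mathcal{Y}^G$ over $k$ (hence reducedness) to make the spreading-out step at the generic point clean.
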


\begin{proof}
By Theorem \ref{main theorem} we know that $\mathcal{Z}\to S$ as constructed in Construction~\ref{construction Z} is a weak N\'eron model of $X$.
Hence by definition
 $S(X)$ equals to the class of the special fiber $\mathcal{Z}_k$ in $K_0^{\mathcal{O}_K}(\Var_k)/(\mathbb{L}-1)$,
and it suffices to show the following statement:
\[
[\mathcal{Z}_k]=[\mathcal{Y}^G]\in K_0^{\mathcal{O}_K}(\Var_k)/(\mathbb{L}-1).
\]
As $K_0^{\mathcal{O}_K}(\Var_k)/(\mathbb{L}-1)$ is a quotient of  $K_0(\Var_k)/(\mathbb{L}-1)$, it suffices to show
the equation in
$K_0(\Var_k)/(\mathbb{L}-1)$.

Consider $b:\mathcal{Z}_k\to \mathcal{Y}^G$ as in Lemma \ref{main lemma}.
We can find $U_i \subset \mathcal{Y}^G$ such that
$ \mathcal{Y}^G=U_1\sqcup \dots \sqcup U_m$ with
${U_i\subset \mathcal{Y}^G\setminus (\cap_{j<i}U_j)}$ open, and $b^{-1}(U_i)\cong \mathbb{A}^{m_i}_{U_i}$ for some $m_i\in \mathbb{N}$, by proceeding in the following way.

By \cite[Proposition 3.5]{ MR1149171}, $\mathcal{Y}^ G$ is smooth over $T^G=\Spec(k)$, hence in particular reduced.
Replacing $\mathcal{Y}^ G$ by an open subset, we may assume that it is integral.
Let $\eta$ be the generic point of $\mathcal{Y}^G$ with residue field $\kappa(\eta)$.
By Lemma \ref{main lemma}
there is an isomorphism $\beta: b^{-1}(\eta)\to \mathbb{A}^{m_1}_{\kappa(\eta)}$
over $\kappa(\eta) $ for some $m_1\in \mathbb{N}$.
As $\beta$ is defined by finitely many rational functions over $\mathcal{Y}^ G$,
we find an open subset $U_1$ of $\mathcal{Y}^ G$ over which $\beta$ is already defined.
In particular $b^ {-1}(U_1)\cong \mathbb{A}^{m_1}_{U_1}$.
Now one can proceed with $\mathcal{Y}^ G\setminus U_1$ in the same way.
The claim follows by noetherian induction using that $\mathcal{Y}^G$ is of finite type over $k$.

So using the scissor relations in the Grothendieck ring of $k$-varieties we get in $K_0(\Var_k)/(\mathbb{L}-1)$ that
\begin{align*}
[\mathcal{Z}_k] =[b^{-1}(\mathcal{Y}^G)]&=[b^{-1}(U_1)\sqcup \dots \sqcup b^{-1}(U_m)]\\
&=[b^{-1}(U_1)]+[b^{-1}(U_2)\sqcup \dots \sqcup b^{-1}(U_m)]
=\dots\\
&=\sum_{i=1}^m [b^{-1}(U_i)]
= \sum_{i=1}^m[\mathbb{A}_{U_i}^{m_i}]
= \sum_{i=1}^m [\mathbb{A}_k^{m_i}][U_i]
= \sum_{i=1}^m[U_i]\\
&= [U_1\sqcup \dots \sqcup U_m]
=[\mathcal{Y}^G].
\end{align*}
This proves Theorem \ref{tolle idee}.
\end{proof}

\subsection{Rational Volume}

\begin{fact}\cite[Example 4.3 and Corollary 4.14]{001}\\
There exists a unique ring morphism (realization morphism)
\[
 \chi_c: K_0^{\mathcal{O}_K}(\Var_k)/(\mathbb{L}-1)\to \mathbb{Z}
\]
that sends a class of a separated $k$-scheme $U$ of finite type to the Euler characteristic with proper support
\[
 \chi_c(U)=\sum_{i\geq 0}(-1)^i\Dim H^i_c(U,\mathbb{Q}_l),
\]
with $l\neq \Char(k)$ a prime. The map does not depend on the choice of $l$.
\end{fact}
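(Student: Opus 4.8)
The plan is to build $\chi_c$ on generators and show it descends to every quotient in sight. Fix a prime $l\neq\Char(k)$ and set, for a separated $k$-scheme $U$ of finite type, $\chi_c(U):=\sum_{i\geq 0}(-1)^i\Dim H^i_c(U,\mathbb{Q}_l)$; since $k$ is algebraically closed and $U$ is of finite type, only finitely many of these groups are nonzero, so $\chi_c(U)\in\mathbb{Z}$ is well defined. To see that $[U]\mapsto\chi_c(U)$ extends to a ring morphism on $K_0(\Var_k)$ I would check the two defining properties. For a closed immersion $V\hookrightarrow U$ with open complement $U\setminus V$ there is a long exact sequence
\[
\cdots\to H^i_c(U\setminus V,\mathbb{Q}_l)\to H^i_c(U,\mathbb{Q}_l)\to H^i_c(V,\mathbb{Q}_l)\to H^{i+1}_c(U\setminus V,\mathbb{Q}_l)\to\cdots,
\]
finite in each degree and eventually zero, so the alternating sum of dimensions gives $\chi_c(U)=\chi_c(U\setminus V)+\chi_c(V)$, matching the scissor relations. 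The K\"unneth isomorphism $H^*_c(U\times_{\Spec(k)}V,\mathbb{Q}_l)\cong H^*_c(U,\mathbb{Q}_l)\otimes_{\mathbb{Q}_l}H^*_c(V,\mathbb{Q}_l)$ together with $\chi_c(\Spec(k))=1$ shows $\chi_c$ is multiplicative and unital, so it defines a ring morphism $K_0(\Var_k)\to\mathbb{Z}$.

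Next I would verify descent to the relevant quotients. In mixed characteristic, if $f\colon U\to V$ is a finite, surjective, purely inseparable $k$-morphism, then $f$ is a universal homeomorphism, so by topological invariance of the \'etale site it induces an equivalence of \'etale topoi; combined with properness of $f$ this yields isomorphisms $H^i_c(V,\mathbb{Q}_l)\cong H^i_c(U,\mathbb{Q}_l)$, hence $\chi_c(U)=\chi_c(V)$ and $\chi_c$ annihilates the ideal defining $K_0^{\Bla}(\Var_k)$. To factor through the quotient by $(\mathbb{L}-1)$ I would compute $\chi_c(\mathbb{A}^1_k)$: one has $H^0_c(\mathbb{A}^1_k,\mathbb{Q}_l)=H^1_c(\mathbb{A}^1_k,\mathbb{Q}_l)=0$ and $H^2_c(\mathbb{A}^1_k,\mathbb{Q}_l)\cong\mathbb{Q}_l(-1)$, so $\chi_c(\mathbb{A}^1_k)=1=\chi_c(\Spec(k))$, i.e.~$\mathbb{L}-1\mapsto 0$. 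Uniqueness is then automatic, since $K_0(\Var_k)$ and all its quotients are generated as abelian groups by the classes $[U]$, on which the value of any such ring morphism is prescribed.

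The one genuinely delicate point, and the step I expect to be the main obstacle, is independence of the choice of $l$. In positive characteristic the individual compactly supported Betti numbers $\Dim H^i_c(U,\mathbb{Q}_l)$ need not be independent of $l$, so one cannot argue degree by degree and must use that the alternating sum is $l$-independent. Here I would proceed by the scissor relations and noetherian induction on $\Dim U$ to reduce to the case of a smooth affine curve, and then invoke the Grothendieck--Ogg--Shafarevich formula, whose right-hand side --- expressed through the genus, the number of points at infinity, and Swan conductors --- is visibly independent of $l$; alternatively one can cite the general independence-of-$l$ statement for Euler characteristics. This $l$-independence is exactly the content of \cite[Corollary 4.14]{001}, so in practice I would lean on that reference rather than reprove it.
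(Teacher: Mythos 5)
The paper does not prove this statement at all; it is labelled a Fact and attributed to \cite[Example 4.3 and Corollary 4.14]{001}, so there is no internal argument to measure your sketch against. Your outline is the standard one and is correct in its broad strokes: additivity of $\chi_c$ from the long exact sequence of an open/closed decomposition, multiplicativity from K\"unneth, collapse of $\mathbb{L}-1$ from $\chi_c(\mathbb{A}^1_k)=1$, descent through the purely-inseparable relation via topological invariance of the \'etale site, and uniqueness because the ring map is prescribed on the generators $[U]$.

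The one place where you are a bit too optimistic is the dévissage for $l$-independence. Scissor relations plus noetherian induction on $\dim U$ do reduce you to \emph{smooth} locally closed pieces, but they do not by themselves reduce a smooth $d$-fold with $d\geq 2$ to a smooth affine curve; for that you need an additional fibration step (a dominant map to $\mathbb{A}^{d-1}$ with generically smooth curve fibers, together with generic base change and multiplicativity of $\chi_c$ over a suitable dense open of the base, handling the lower-dimensional complement by induction), or a Lefschetz-pencil argument, or an entirely different route such as compactifying, inducting on the boundary, and using that the $l$-adic Betti numbers of a smooth proper variety are $l$-independent by the Weil conjectures (equivalently, Gauss--Bonnet $\chi(X)=\deg c_{\dim X}(T_X)$). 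You acknowledge the difficulty and say you would in practice defer to \cite[Corollary 4.14]{001}, which is exactly what the paper does, so this is a presentational caveat rather than a fatal gap; but the phrase ``by scissor relations and noetherian induction on $\dim U$ to reduce to \ldots\ a smooth affine curve'' understates what the reduction actually requires.
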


\begin{defn}
\label{def rational volume}
Let $X$ be a smooth $K$-variety with weak N\'eron model. Then the \emph{rational volume} of $X$ is defined by
\[
 s(X)\!:=\chi_c(S(X))\in \mathbb{Z}.
\]
\end{defn}

\begin{rem}
\label{rem rat vol}
Let $X$ be a smooth $K$-variety without $K$-rational point.
Then $s(X)=0$.
This holds, because by Remark \ref{S=0},
$S(X)=0$, hence in particular ${s(X)=\chi_c(S(X))=0}$.
So if $s(X)\neq 0$, then $X$ has a $K$-rational point.
\end{rem}

\begin{thm} \label{rational volume}
Let $X$ be a smooth, proper $K$-variety,
and let $L/K$ be a tame Galois extension, such that
$G\! :=\Gal(L/K)$ is a $q$-group, $q \neq \Char(k)$ a prime.
Then
\[
 s(X_L)=s(X) \mod q.
\]
In particular, if  $s(X_L)$ does not vanish modulo $q$, then $X$ has a $K$-rational point.
\end{thm}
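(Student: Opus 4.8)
The plan is to realize both rational volumes through one and the same weak N\'eron model of $X_L$ and then apply an equivariant Euler characteristic congruence. First, by Theorem~\ref{gaction on wnm} we may fix a weak N\'eron model $\varphi:\mathcal{Y}\to T$ of $X_L$ carrying a good $G$-action that extends the Galois action on $X_L$. Forgetting the action, $\mathcal{Y}$ is a weak N\'eron model of $X_L$ in the ordinary sense, so by Definition~\ref{def serre invariant} and Definition~\ref{def rational volume} we have $s(X_L)=\chi_c(S(X_L))=\chi_c(\mathcal{Y}_k)$. On the other hand, Theorem~\ref{tolle idee} gives $S(X)=[\mathcal{Y}^G]$ in $K_0^{\mathcal{O}_K}(\Var_k)/(\mathbb{L}-1)$, and applying the realization morphism $\chi_c$ yields $s(X)=\chi_c(S(X))=\chi_c(\mathcal{Y}^G)$. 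Thus it remains to prove $\chi_c(\mathcal{Y}_k)\equiv\chi_c(\mathcal{Y}^G)\pmod q$.

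Next I would set up the $G$-action on the special fiber. Since $\varphi$ is $G$-equivariant and the action on $T$ fixes the closed point (any automorphism of the local ring $\mathcal{O}_L$ fixes its maximal ideal), the special fiber $\mathcal{Y}_k\subset\mathcal{Y}$ is $G$-invariant, and restricting a cover of $\mathcal{Y}$ by affine $G$-invariant opens shows that the induced $G$-action on $\mathcal{Y}_k$ is good. Moreover the fixed locus of this action is exactly $\mathcal{Y}^G$: as already noted in the proof of Lemma~\ref{main lemma}, $T^G=\Spec(k)$ forces $\mathcal{Y}^G$ to be a closed subscheme of $\mathcal{Y}_k$, and conversely, for any $k$-scheme $W$ every $S$-morphism $W\to\mathcal{Y}$ factors through $\mathcal{Y}_k$, so $\mathcal{Y}^G(W)=\mathcal{Y}(W)^G=\mathcal{Y}_k(W)^G$; hence $\mathcal{Y}^G=(\mathcal{Y}_k)^G$ as $k$-schemes.

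Finally I would invoke the fact, going back to \cite[Section~7.2]{MR2555994}, that for a scheme $V$ of finite type over $k$ with a good action of a finite $q$-group $G$ one has $\chi_c(V)=\chi_c(V^G)\bmod q$ (one reduces to a cyclic group of prime order, stratifies off the fixed locus, and uses that $\chi_c$ is multiplicative in the degree on the resulting free quotient). Applying this to $V=\mathcal{Y}_k$ with its good $G$-action, and using $(\mathcal{Y}_k)^G=\mathcal{Y}^G$ from the previous step, we obtain
\[
s(X_L)=\chi_c(\mathcal{Y}_k)\equiv\chi_c(\mathcal{Y}^G)=s(X)\pmod q,
\]
which is the assertion. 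The last sentence of the statement is then immediate: if $s(X_L)\not\equiv 0\bmod q$ then $s(X)\neq 0$, so $X$ has a $K$-rational point by Remark~\ref{rem rat vol}. I expect the only genuinely delicate point to be the identification $(\mathcal{Y}_k)^G=\mathcal{Y}^G$ together with the verification that the induced action on $\mathcal{Y}_k$ is good (so that the cited congruence applies); note also that the hypothesis $q\neq\Char(k)$, needed for the congruence and automatic from tameness of $L/K$, is built into the statement. Everything else is bookkeeping with Theorems~\ref{gaction on wnm} and~\ref{tolle idee} and the definitions of the motivic invariants.
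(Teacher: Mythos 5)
Your proof is correct and follows essentially the same route as the paper: fix a $G$-equivariant weak N\'eron model via Theorem~\ref{gaction on wnm}, read off $s(X_L)=\chi_c(\mathcal{Y}_k)$ and $s(X)=\chi_c(\mathcal{Y}^G)$ using Theorem~\ref{tolle idee}, and then invoke the equivariant congruence $\chi_c(V)\equiv\chi_c(V^G)\bmod q$ for good $q$-group actions. You spell out a bit more explicitly than the paper why the restricted action on $\mathcal{Y}_k$ is good and why $(\mathcal{Y}_k)^G=\mathcal{Y}^G$, but these are exactly the verifications the paper implicitly relies on (citing the proof of Lemma~\ref{main lemma} for $\mathcal{Y}^G\subset\mathcal{Y}_k$), so there is no substantive difference in approach.
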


\begin{proof}
Let $\mathcal{O}_L$ be the ring of integers of $L$, $T:=\Spec(\mathcal{O}_L)$.
By Theorem \ref{gaction on wnm} there is a weak N\'eron model $\varphi: \mathcal{\mathcal{Y}}\to T$ of $X_L$ with a good $G$-action on $\mathcal{\mathcal{Y}}$,
extending the Galois action on $X_L$. Hence
Theorem \ref{tolle idee} implies that
\[
 S(X)=[\mathcal{\mathcal{Y}}^G] \in K_0^{\mathcal{O}_K}(\Var_k)/(\mathbb{L}-1).
\]
As $X_L\subset \mathcal{Y}$ is $G$-invariant, the same holds for $\mathcal{Y}_k$, so the action of $G$ on $\mathcal{Y}$ restricts to $\mathcal{Y}_k$.
By
\cite[Proposition 5.4]{1009.1281},
for every variety $U$ over a field $F$ with a good $G$-action, $ \chi_c(U)= \chi_c(U^G) \mod q$.
This Proposition is based on an argument in \cite[Section 7.2]{MR2555994}.
In our case we get
\[
 \chi_c(\mathcal{\mathcal{Y}}_k)= \chi_c(\mathcal{\mathcal{Y}}_k^G) \mod q.
\]
As $\mathcal{\mathcal{Y}}^G\subset \mathcal{\mathcal{Y}}_k$, see the proof of Lemma \ref{main lemma}, $\mathcal{\mathcal{Y}}^G=\mathcal{\mathcal{Y}}_k^G$.
As $\mathcal{\mathcal{Y}}$ is a weak N\'eron model of $X_L$, by definition $S(X_L)=[\mathcal{\mathcal{Y}}_k]\in K_0^{\mathcal{O}_K}(\Var_k)$.
So altogether we obtain
\begin{align*}
 s(X_L)=\chi_c(S(X_L))=\chi_c(\mathcal{\mathcal{Y}}_k)&=\chi_c(\mathcal{\mathcal{Y}}^G) \mod q\\
&= \chi_c(S(X)) \mod  q\\
 &=s(X) \mod q.
\end{align*}
Assume now that $s(X_L)\neq 0\mod q$. This implies that $s(X)\neq 0$.
But the rational volume of a smooth $K$-variety without $K$-rational point vanishes, see Remark \ref{rem rat vol}, hence $X$ has a $K$-rational point.
\end{proof}

\section{Rational Points on Certain Varieties with Potential Good Reduction}
\begin{defn}
 A smooth, proper $K$-variety $X$ has \emph{potential good reduction} (\emph{after a base change of order $r$})
if there exists a Galois extension $L/K$ (of degree $r$),
such that
$X_L$
has a smooth and proper model.
\end{defn}

\begin{cor}\label{korollar rational volume}
 Let $X$ be a smooth, proper $K$-variety,
which has potential good reduction after a base change of order $q^r$, $q\neq char(k)$ a prime.
Then
\[
\chi(X):=\sum_{i\geq 0}(-1)^i\Dim H^i(X_{K^s},\mathbb{Q}_l)=s(X) \mod q
\]
with $K^s$ a separable closure of $K$, $l\neq \Char(k)$ a prime.
In particular, if $\chi(X)$ does not vanish modulo $q$, then $X$ has a $K$-rational point.
\end{cor}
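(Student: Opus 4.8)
The plan is to deduce the corollary from Theorem~\ref{rational volume} together with the smooth and proper base change theorem. By hypothesis there is a Galois extension $L/K$ of degree $q^r$ such that $X_L$ admits a smooth and proper model $\varphi\colon\mathcal{Y}\to T$, where $T=\Spec(\mathcal{O}_L)$. Since $q\neq\Char(k)$, the order $q^r$ of $G:=\Gal(L/K)$ is prime to $\Char(k)$, so $L/K$ is tame and $G$ is a $q$-group. Hence Theorem~\ref{rational volume} applies and yields $s(X_L)=s(X)\mod q$. It therefore suffices to prove that $s(X_L)=\chi(X)$: then $\chi(X)=s(X)\mod q$, and if moreover $\chi(X)\not\equiv 0\mod q$, then $s(X)\neq 0$, so $X$ has a $K$-rational point by Remark~\ref{rem rat vol}.

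First I would observe that $\mathcal{Y}$ is a weak N\'eron model of $X_L$: it is smooth and separated over $T$, and since $\varphi$ is proper the valuative criterion of properness makes $\mathcal{Y}(\mathcal{O}_L)\to X_L(L)$ a bijection. Hence by Definition~\ref{def serre invariant}, together with the independence of the invariant from the chosen weak N\'eron model, we have $S(X_L)=[\mathcal{Y}_k]$, so $s(X_L)=\chi_c(\mathcal{Y}_k)$, where $\mathcal{Y}_k$ is the special fiber; note $\mathcal{Y}_k$ is smooth and proper over the algebraically closed field $k$.

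Next I would apply smooth and proper base change to $\varphi$. Since $l\neq\Char(k)$ and $\varphi$ is smooth and proper, proper base change identifies $H^i(\mathcal{Y}_k,\mathbb{Q}_l)$ with the stalk of $R^i\varphi_*\mathbb{Q}_l$ at a geometric point over the closed point, and smooth (and proper) base change shows that this sheaf is lisse on the connected scheme $T$, hence that this stalk has the same dimension as the stalk at a geometric point $\Spec(\bar L)$ over the generic point $\Spec(L)$, which by proper base change again is $H^i(\mathcal{Y}_{\bar L},\mathbb{Q}_l)$. The geometric generic fiber is $\mathcal{Y}_{\bar L}=(\mathcal{Y}\times_T\Spec L)\times_L\bar L=X\times_K\bar L$. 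Now $\bar L$ contains an algebraic closure $\bar K$, hence a separable closure $K^s$, of $K$; since $X$ is a smooth proper $K$-variety, its $\mathbb{Q}_l$-cohomology is unchanged under the purely inseparable extension $\bar K/K^s$ (topological invariance of \'etale cohomology) and under the extension $\bar L/\bar K$ of algebraically closed fields, so $\Dim H^i(X\times_K\bar L,\mathbb{Q}_l)=\Dim H^i(X_{K^s},\mathbb{Q}_l)$ for all $i$. Taking the alternating sum gives $\chi_c(\mathcal{Y}_k)=\sum_{i\geq 0}(-1)^i\Dim H^i(X_{K^s},\mathbb{Q}_l)=\chi(X)$, as required, and combining with the first paragraph finishes the proof.

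The step I expect to be the main obstacle is this last one: none of the base change statements is individually deep, but one has to assemble them carefully over the trait $T$ and then track the Euler characteristic through the base fields $\bar L$, $\bar K$ and $K^s$ in order to identify $\chi_c(\mathcal{Y}_k)$ with exactly the quantity $\chi(X)=\sum_{i\geq0}(-1)^i\Dim H^i(X_{K^s},\mathbb{Q}_l)$ appearing in the statement.
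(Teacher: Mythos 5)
Your proof is correct and follows essentially the same route as the paper: observe that a smooth proper model $\mathcal{Y}$ is a weak N\'eron model of $X_L$, apply smooth and proper base change to identify $\chi_c(\mathcal{Y}_k)$ with $\chi(X)$, and then invoke Theorem~\ref{rational volume}. The only cosmetic difference is in the last identification of fields: the paper simply notes that $L^s=K^s$ (since $L/K$ is finite separable) so that $X_L\times_{\Spec(L)}\Spec(L^s)=X_{K^s}$ directly, whereas you pass through $\bar L=\bar K$ and then use topological invariance of \'etale cohomology for $\bar K/K^s$; both are fine, the paper's is slightly shorter.
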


\begin{proof}
Let $L/K$ be the field extension of degree $q^r$, such that there is a smooth and proper model of $X_L$.
Let $\mathcal{O}_L$ be the ring of integers of $L$, $T:=\Spec(\mathcal{O}_L)$, and $\varphi: \mathcal{Y}\to T$ a smooth and proper model of $X_L$,
which is in particular a weak N\'eron model of $X_L$.
So by definition $s(X_L)=\chi_c(\mathcal{Y}_k)$.
As $\varphi$ is proper, $\mathcal{Y}_k$ is proper over $k$, and hence the ordinary cohomology coincides with the cohomology with proper support, i.\thinspace e.~$\chi_c(\mathcal{Y}_k)=\chi(\mathcal{Y}_k)$.
As $\varphi$ is proper and smooth, by \cite[Expos\'e V, Theorem 3.1]{MR0463174} we get bijections between
$H^i(\mathcal{Y}_k,\mathbb{Z}/n\mathbb{Z})$, and $H^i(\mathcal{Y},\mathbb{Z}/n\mathbb{Z})$, and $H^i(X_L\times_{\Spec(L)} \Spec(L^s),\mathbb{Z}/n\mathbb{Z})$ for all $i$,
with $L^s$ a separable closure of $L$.
Therefore we have for all $i$ that
\[
 \Dim H^i(\mathcal{Y}_k,\mathbb{Q}_l)=\Dim H^i(X_L\times_{\Spec(L)}L^s,\mathbb{Q}_l).
\]
Note that $L^s=K^s$, because $L/K$ is a tame Galois extension. Therefore we get
${X_L\times_{\Spec(L)}\Spec(L^s)=X\times_{\Spec(K)}\Spec(K^s)=X_{K^s}}$,
hence
\begin{align*}
 \chi(X)&=\sum_{i\geq 0}(-1)^i\Dim H^i(X_{K^s},\mathbb{Q}_l)
=\sum_{i\geq 0}(-1)^i\Dim H^i(\mathcal{Y}_k,\mathbb{Q}_l)=\chi(\mathcal{Y}_k).
\end{align*}
This implies that $s(X_L)=\chi(X)$. 
Hence Theorem \ref{rational volume} implies the corollary.
\end{proof}

\begin{cor}\label{Korollar}
Let $X$ be a smooth, proper $K$-variety, let $L/K$ be a tame Galois extension of prime order q,
and assume that there is a smooth and proper model of $X_L$ with a good $G:=\Gal(L/K)$-action extending the Galois action on $X_L$,
i.e.~in particular $X$ has potential good reduction after a base change of order $q$.

 If $\chi(X,\mathcal{O}_X)$ does not vanish modulo $q$, then $X$ has a $K$-rational point.
\end{cor}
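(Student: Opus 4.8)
The plan is to mimic the strategy of Corollary \ref{korollar rational volume}, but to work with the coherent Euler characteristic $\chi(X,\mathcal{O}_X)$ instead of the $\ell$-adic one, which forces us to argue directly rather than going through the motivic invariants. Let $\mathcal{O}_L$ be the ring of integers of $L$, $T:=\Spec(\mathcal{O}_L)$, and let $\varphi:\mathcal{Y}\to T$ be the given smooth and proper model of $X_L$ carrying a good $G$-action extending the Galois action on $X_L$. By Corollary \ref{sections of the quotient}, to produce a $K$-rational point of $X$ it suffices to show that the fixed locus $\mathcal{Y}^G$ is nonempty; indeed $\mathcal{X}:=\mathcal{Y}/G$ is then a model of $X$ with $\mathcal{X}(\mathcal{O}_K)\neq\emptyset$, and since $X$ is proper this $\mathcal{O}_K$-point induces a $K$-point of $X$. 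So the whole corollary reduces to the claim: if $\chi(X,\mathcal{O}_X)\not\equiv 0\bmod q$, then $\mathcal{Y}^G\neq\emptyset$.

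First I would identify $\chi(X,\mathcal{O}_X)$ with $\chi(\mathcal{Y}_k,\mathcal{O}_{\mathcal{Y}_k})$. Since $\varphi$ is proper and flat, the Euler characteristic $\chi(\mathcal{Y}_s,\mathcal{O}_{\mathcal{Y}_s})$ is constant on the fibers $\mathcal{Y}_s$ of $\varphi$ over $T$ (the function $s\mapsto\chi(\mathcal{Y}_s,\mathcal{O}_{\mathcal{Y}_s})$ is locally constant on $T$, which is connected); applying this to the generic point gives $\chi(\mathcal{Y}_k,\mathcal{O}_{\mathcal{Y}_k})=\chi(X_L,\mathcal{O}_{X_L})$, and flat base change from $K$ to $L$ gives $\chi(X_L,\mathcal{O}_{X_L})=\chi(X,\mathcal{O}_X)$. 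Next, $G$ acts on $\mathcal{Y}$ with $\varphi$ $G$-equivariant, so it preserves the special fiber $\mathcal{Y}_k$, and the fixed locus of this induced $G$-action on $\mathcal{Y}_k$ is $\mathcal{Y}^G$ (as noted in the proof of Lemma \ref{main lemma}, $\mathcal{Y}^G\subset\mathcal{Y}_k$). Now $\mathcal{Y}_k$ is a smooth proper $k$-scheme with an action of the cyclic group $G$ of prime order $q\neq\Char(k)$, so I would invoke the holomorphic Lefschetz/Atiyah--Bott fixed point theorem, or more simply the elementary fact that for a finite group $G$ of order prime to $\Char(k)$ acting on a coherent sheaf cohomology theory one has $\chi(\mathcal{Y}_k,\mathcal{O}_{\mathcal{Y}_k})\equiv\chi(\mathcal{Y}_k^G,\mathcal{O}_{\mathcal{Y}_k^G})\bmod q$ — the same kind of congruence used for $\chi_c$ in Theorem \ref{rational volume}, now in the coherent setting. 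Concretely: $H^i(\mathcal{Y}_k,\mathcal{O}_{\mathcal{Y}_k})$ is a finite-dimensional $k$-vector space with a $\mathbb{Z}/q\mathbb{Z}$-action, hence decomposes into eigenspaces for the $q$-th roots of unity, and the contributions of the nontrivial eigencharacters come in a way that makes the alternating sum of trivial-eigenspace dimensions congruent mod $q$ to the total; and by descent/localization the trivial part of $H^\bullet(\mathcal{Y}_k,\mathcal{O})$ computes cohomology of the fixed scheme. Combining, $\chi(X,\mathcal{O}_X)\equiv\chi(\mathcal{Y}_k^G,\mathcal{O}_{\mathcal{Y}_k^G})\bmod q$.

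Therefore, if $\chi(X,\mathcal{O}_X)\not\equiv 0\bmod q$, then $\chi(\mathcal{Y}_k^G,\mathcal{O})\neq 0$, so in particular $\mathcal{Y}_k^G=\mathcal{Y}^G$ is nonempty. By the reduction in the first paragraph, $X$ then has a $K$-rational point, which completes the proof.

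I expect the main obstacle to be making the mod-$q$ congruence $\chi(\mathcal{Y}_k,\mathcal{O})\equiv\chi(\mathcal{Y}^G,\mathcal{O})\bmod q$ precise and correctly attributed: unlike the $\ell$-adic case handled in Theorem \ref{rational volume} (for which \cite[Proposition 5.4]{1009.1281} is cited), here one needs the analogous statement for Zariski/coherent cohomology of the structure sheaf, which does hold because $q$ is invertible in $k$ so $G$-representations over $k$ split and the invariants functor is exact, and because fixed points of a finite group of order prime to the characteristic on a smooth scheme are again smooth, with the expected behaviour of cohomology under the quotient map $\mathcal{Y}_k\to\mathcal{Y}_k/G$. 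Care is needed that $\mathcal{Y}_k$ need not be connected or irreducible, but the congruence is additive over connected components, and components permuted nontrivially by $G$ contribute a multiple of $q$ to $\chi(\mathcal{Y}_k,\mathcal{O})$ and nothing to the fixed locus, so they may be discarded; this reduces the essential case to a single $G$-stable connected component, where the eigenspace argument applies cleanly.
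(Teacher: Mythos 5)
Your reduction to showing $\mathcal{Y}^G\neq\emptyset$ via Corollary \ref{sections of the quotient} and the identification $\chi(X,\mathcal{O}_X)=\chi(\mathcal{Y}_k,\mathcal{O}_{\mathcal{Y}_k})$ are both correct and match the paper, but the central step of your argument, namely the asserted congruence $\chi(\mathcal{Y}_k,\mathcal{O}_{\mathcal{Y}_k})\equiv\chi(\mathcal{Y}_k^G,\mathcal{O}_{\mathcal{Y}_k^G})\bmod q$, is false. Take $\mathcal{Y}_k=\mathbb{P}^1_k$, $\Char(k)\neq 2$, with $G=\mathbb{Z}/2\mathbb{Z}$ acting by $[x_0:x_1]\mapsto[-x_0:x_1]$: then $\chi(\mathbb{P}^1,\mathcal{O})=1$, the fixed locus is two reduced points with $\chi(\mathcal{Y}_k^G,\mathcal{O})=2$, and $1\not\equiv 2\bmod 2$. (Or $\mathbb{P}^2$ with a $\mathbb{Z}/3$-action having three isolated fixed points: $1\not\equiv 3\bmod 3$.) The analogy with the $\chi_c$-congruence of Theorem \ref{rational volume} breaks down precisely because $\chi_c$ factors through the Grothendieck ring, so one may cut out $\mathcal{Y}_k^G$, use the scissor relation, and apply the free-cover divisibility on the complement; the coherent Euler characteristic $\chi(\cdot,\mathcal{O})$ satisfies no scissor relation and is only defined for proper schemes, so there is no such stratification. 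The eigenspace decomposition of $H^i(\mathcal{Y}_k,\mathcal{O})$ does exist, but the nontrivial eigenspaces have no reason to contribute a multiple of $q$, and the trivial eigenspaces compute $H^i(\mathcal{Y}_k/G,\mathcal{O})$, not $H^i(\mathcal{Y}_k^G,\mathcal{O})$; these last two agree only when the action is free.

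The paper sidesteps this by arguing by contradiction and using only the free case, where coherent cohomology does behave perfectly. If $\mathcal{Y}^G=\emptyset$, then since $q$ is prime every nontrivial element of $G$ generates $G$, so the action on $\mathcal{Y}_k$ is free; hence $f:\mathcal{Y}_k\to\mathcal{Y}_k/G$ is finite \'etale of degree $q$ and $\mathcal{Y}_k/G$ is smooth and proper. Applying Hirzebruch--Riemann--Roch together with the projection formula and $f^*T_{\mathcal{Y}_k/G}=T_{\mathcal{Y}_k}$ yields the exact equality $\chi(\mathcal{Y}_k,\mathcal{O}_{\mathcal{Y}_k})=q\,\chi(\mathcal{Y}_k/G,\mathcal{O}_{\mathcal{Y}_k/G})$, so $\chi(X,\mathcal{O}_X)\equiv 0\bmod q$, contradicting the hypothesis. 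Replacing your claimed fixed-locus congruence by this free-cover divisibility (in the contrapositive) repairs the argument and recovers the paper's proof.
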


\begin{proof}
Let $\mathcal{O}_L$ be the ring of integers of $L$, $T:=\Spec(\mathcal{O}_L)$.
Let $\varphi : \mathcal{Y}\to T$ be the smooth and proper model of $X_L$ on which there is a good $G$-action extending the Galois action on $X_L$.
As $X_L\subset \mathcal{Y}$ is $G$-invariant, the same holds for $\mathcal{Y}_k\subset \mathcal{Y}$, hence
the $G$-action on $\mathcal{Y}$ restricts to a good $G$-action on $\mathcal{Y}_k$.
Let $f: \mathcal{Y}_k\to  \mathcal{Y}_k/G$ be the quotient.\\
Assume that the action of $G$ on $\mathcal{Y}$ has no fixed point, so in particular the action of $G$ on $\mathcal{Y}_k$ has no fixed point.
As $q$ is a prime, the action is free.
So $f$ is a finite, \'etale morphism of degree $q$ by \cite[Expos\'e V, Corollaire 2.3]{MR0217087}.
Moreover $\mathcal{Y}_k$ is smooth and proper over $k$, because $\varphi$ is smooth and proper.
As $f$ is \'etale and finite, $\mathcal{Y}_k/G$ is smooth and proper over $k$, too.
As $f$ is \'etale, $f^*(T_{\mathcal{Y}_k/G})=T_{\mathcal{Y}_k}$, and therefore $f^*(\Td(T_{\mathcal{Y}_k/G}))=\Td(T_{\mathcal{Y}_k})$.
Let ${s: \mathcal{Y}_k\to \Spec(k)}$ and $s':{\mathcal{Y}_k/G}\to \Spec(k)$ be the structure maps. We have ${s=s'\circ f}$.
Using \cite[Corollary 15.2.2]{MR1644323}, and the projection formula in the third line, we obtain
\begin{align*}
\chi ({\mathcal{Y}}_k,\mathcal{O}_{\mathcal{Y}_k})&= s_*(\Ch(\mathcal{O}_{\mathcal{Y}_k})\Td(T_{\mathcal{Y}_k}))\\
&= s'_*(f_*(\Ch(\mathcal{O}_{\mathcal{Y}_k})f^*(\Td(T_{{\mathcal{Y}_k/G}}))))\\
&= s'_*(f_*(\Ch(\mathcal{O}_{\mathcal{Y}_k}))\Td(T_{{\mathcal{Y}_k/G}}))\\
&= s'_*(\Deg(f)\Ch(\mathcal{O}_{{\mathcal{Y}_k/G}})\Td(T_{{\mathcal{Y}_k/G}}))= q\ \chi ({\mathcal{Y}_k/G},\mathcal{O}_{{\mathcal{Y}_k/G}}).
\end{align*}
This implies that $\chi (\mathcal{Y}_k,\mathcal{O}_{\mathcal{Y}_k})=0 \mod q$.

Note that
${H^i(X_L,\mathcal{O}_{X_L})=H^i(X_L,\mathcal{O}_{X}\otimes_K L)=H^i(X,\mathcal{O}_{X})\otimes _K L}$ holds for all ${i\geq 0}$.
So in particular $\chi(X_L,\mathcal{O}_{X_L})=\chi(X,\mathcal{O}_X)$.
As $\varphi$ is smooth and proper,
and $T$ is connected, by \cite[Theorem~7.9.4.I]{MR0217084} the Euler characteristic is constant on the fibers of $\varphi$, hence 
$\chi(\mathcal{Y}_k,\mathcal{O}_{\mathcal{Y}_k})=\chi(X_L,\mathcal{O}_{X_L})$, which does not vanish modulo $q$.
This is a contradiction, hence $\mathcal{Y}^G\neq \emptyset$.
So Corollary \ref{sections of the quotient} implies that $X$ has a $K$-rational point.
\end{proof}

\section{Appendix}

In this section we show two lemmas concerning tame cyclic actions on Henselian, regular, local rings.
These results are used in Section \ref{local studies}.

The following lemma should be known to the experts;
a similar statement can be found in \cite{MR0234953}.

\begin{lem} \label{ap1} 
Let $A$ be a regular, Henselian ring of dimension n with maximal ideal $\Em$,
such that its residue field $\kappa$ is a field of $\Char(\kappa)\nmid r$ containing all $r$-th roots of unity, and
let $\alpha \in \Aut(A)$ with $\alpha ^r=\Id$, such that the residual map on $\kappa$ is trivial.\\
There exists a regular system of parameters $x_1,\dots, x_n \in\Em \subset A $ with
${{\alpha} (x_i)=\mu^ {\ell_i} x_i}$, 
$\mu\in A$ a primitive r-th root of unity, and $\ell_i \in \{0,\dots,r-1\}$.\\
If there are $z_1,\dots, z_s\in \Em\subset A$, such that the $\bar{z}_1,\dots,\bar{z}_s\in \Em\!/\!\Em^2$ are linearly independent,
and such that ${\alpha}(z_i)=\mu^{\ell _i} z_i$ for some $\ell_i\in {\{0,\dots,r-1\}}$,
then we may chose $x_i=z_i$ for $i\leq s$.
\end{lem}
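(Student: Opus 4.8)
The plan is to average over the cyclic group $\langle \alpha \rangle$ to diagonalize the induced action on the cotangent space $\Em/\Em^2$, and then lift the eigenvectors to genuine parameters in $\Em$. First I would set $\bar\alpha$ for the $\kappa$-linear automorphism of $V := \Em/\Em^2$ induced by $\alpha$; this is well defined since $\alpha(\Em)=\Em$ (as $\alpha$ is a local automorphism — the residual map is trivial, in particular $\alpha$ preserves the maximal ideal). Because $\bar\alpha^r = \mathrm{id}$, $\Char(\kappa)\nmid r$, and $\kappa$ contains all $r$-th roots of unity, the operator $\bar\alpha$ is diagonalizable over $\kappa$ with eigenvalues among the $r$-th roots of unity; fix a primitive $r$-th root $\mu$ in $\kappa$ (which lifts to a primitive $r$-th root in $A$ since $A$ is Henselian with residue characteristic prime to $r$, using Hensel to split $X^r-1$). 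So we get a basis $\bar e_1,\dots,\bar e_n$ of $V$ with $\bar\alpha(\bar e_i)=\mu^{\ell_i}\bar e_i$, $\ell_i\in\{0,\dots,r-1\}$.

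Next I would lift: choose arbitrary $\tilde x_i\in\Em$ reducing to $\bar e_i$, and project onto the $\mu^{\ell_i}$-eigenspace using the averaging operator
\[
x_i := \frac{1}{r}\sum_{j=0}^{r-1}\mu^{-\ell_i j}\,\alpha^j(\tilde x_i),
\]
which makes sense because $r$ is invertible in $A$ (its image $r\in\kappa$ is nonzero, and $A$ is local, so $r\in A^\times$). One checks directly that $\alpha(x_i)=\mu^{\ell_i}x_i$, and that $x_i\equiv \tilde x_i \pmod{\Em^2}$: indeed modulo $\Em^2$ the averaging sends $\bar e_i$, an eigenvector of $\bar\alpha$ with eigenvalue $\mu^{\ell_i}$, to itself. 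Hence $x_1,\dots,x_n$ still reduce to a basis of $\Em/\Em^2$, so by Nakayama they generate $\Em$, and since $A$ is regular of dimension $n$ they form a regular system of parameters.

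For the last assertion, suppose we are given $z_1,\dots,z_s\in\Em$ with $\alpha(z_i)=\mu^{\ell_i}z_i$ and with $\bar z_1,\dots,\bar z_s\in V$ linearly independent. Each $\bar z_i$ is then an eigenvector of $\bar\alpha$ with eigenvalue $\mu^{\ell_i}$ (this forces $\alpha(z_i)\equiv\mu^{\ell_i}z_i\pmod{\Em^2}$, consistent with the hypothesis). Since the $\bar z_i$ are linearly independent eigenvectors, I would extend them to an eigenbasis $\bar z_1,\dots,\bar z_s,\bar e_{s+1},\dots,\bar e_n$ of $V$ — possible because $\bar\alpha$ is diagonalizable, so each eigenspace is spanned by eigenvectors and one completes within the relevant eigenspaces. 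Then run the averaging construction above only on the lifts $\tilde e_{s+1},\dots,\tilde e_n$ of the new basis vectors, obtaining $x_{s+1},\dots,x_n$ with $\alpha(x_i)=\mu^{\ell_i}x_i$; together with $x_i:=z_i$ for $i\le s$ these reduce to a basis of $V$ and hence form a regular system of parameters with the required property.

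The only real subtlety — and the step I would be most careful about — is the completion of a partial eigenbasis inside a single eigenspace: one must know that $\bar z_1,\dots,\bar z_s$, grouped by eigenvalue, can be extended basis-by-basis within each eigenspace of $\bar\alpha$, which is immediate from diagonalizability but worth stating. Everything else (Hensel for the root of unity and for $r\in A^\times$, the averaging identities, Nakayama, regularity forcing a minimal generating set of $\Em$ to be a regular system of parameters) is routine.
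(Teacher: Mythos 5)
Your proposal is correct and follows essentially the same route as the paper: Hensel lifting of the primitive $r$-th root of unity, diagonalizing $\bar\alpha$ on $\Em/\Em^2$ using that $x^r-1$ splits with distinct roots over $\kappa$, completing the given $\bar z_i$ to an eigenbasis within each eigenspace, and lifting the remaining basis vectors by the averaging operator $\frac{1}{r}\sum_j \mu^{-\ell_i j}\alpha^j(\tilde x_i)$. The paper phrases the diagonalizability step by first passing to an algebraic closure and observing that the Jordan form is diagonal, but that is merely an expository variant of your direct argument.
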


\begin{proof}
Consider the polynomial $p(x):=x^r-1 \in A[x]$.
Let $\mu \in \kappa$ be an $r$-th root of unity, hence
$p(\mu)=0\in \kappa$. $p'(\mu)=r\mu^{r-1}\neq 0\in \kappa$, because $r\neq 0\in \kappa$.
As $A$ is Henselian, Hensel's Lemma gives us a $\tilde{\mu}\in A$, such that $\tilde{\mu}=\mu \mod \Em$, and $p(\tilde{\mu})=0$,
i.\thinspace e.~$\tilde{\mu}$ is a lift of $\mu$, and $\tilde{\mu}^r=1$.
So we may fix a primitive r-th root of unity $\mu\in A$.
Identify $\mu$ with its image in $\kappa$ under the residue map.

As $A$ is a regular local ring of dimension $n$ with residue field $\kappa$, $\Em \! / \!\Em ^2$ is an $n$-dimensional $\kappa$-vector space.
Let $\bar{\alpha}\in \Aut(\Em \! / \!\Em ^2 )$ be the automorphism induced by $\alpha$.
As the morphism on $A/\!\Em=\kappa$ induced by $\alpha$ is trivial, $\bar{\alpha}$ is a $\kappa$-linear map.
For some algebraic closure $\bar{\kappa}$ of $\kappa$, there exists a basis  of $\Em \! / \!\Em ^2\otimes_\kappa \bar{\kappa}$, such that the matrix corresponding to $\bar{\alpha}$ has Jordan normal form.
As $\bar{\alpha}^r=\Id$, all eigenvalues are $r$-th roots of unity, i.\thinspace e.~powers of $\mu$, and as $r\neq 0\in \kappa\subset \bar{\kappa}$, the matrix is already diagonal.
But all $r$-th roots of unity are assumed to be in $\kappa$,
so $\bar{\alpha}$ is diagonalizable, too.
Therefore $\Em\! / \! \Em^2$ decomposes into eigenspaces $E_j$.
By assumption, for all $i$ there exists a $j$ such that $\bar{z}_i\in E_j$. Note that for all $j$ one can choose a basis $B_j$ of $E_j$ such that for all $i$, $\bar{z}_i\in \cup B_j$.
This uses the fact that the $\bar{z_i}$ are linearly independent.
Set  $\{\bar{x}_{s+1},\dots \bar{x}_{n}\}\!:=\cup B_j \!\setminus\! \{\bar{z}_1,\dots,\bar{z}_s\}$.
As the $E_j$ are eigenspaces, we have $\bar{\alpha}(\bar{x}_{i})=\mu^{l_i}\bar{x}_{i}$ for some $l_i\in \{0,\dots,l-1\}$.

Choose $\tilde{x}_i\in A$ such that $\tilde{x}_i\!\mod \Em^2=\bar{x}_i$.
As $r$ is invertible in $A$, we can define $x_i$ for $i>s$ as follows.
\[
x_i:=\frac{1}{r}\sum_{j=0}^{r-1}\mu^{-\ell_ij}\alpha^j(\tilde{x}_i) 
\]
We have
\begin{align*}
 \alpha(x_i)&=\frac{1}{r}\sum_{j=0}^{r-1}\mu^{-\ell_ij}\alpha^{j+1}(\tilde{x}_i)
 =\frac{\mu^{\ell_i}}{r}\sum_{j=0}^{r-1}\mu^{-\ell_i(j+1)}\alpha^{j+1}(\tilde{x}_i)\\
  &=\frac{\mu^{\ell_i}}{r}(\sum_{j=1}^{r-1}\mu^{-\ell_ij}\alpha^j(\tilde{x}_i)+\mu^{-\ell_ir}\alpha^r(\tilde{x}_i))=\mu^{\ell_i} x_i.
\end{align*}
Moreover
\[
 x_i\!\mod \Em^2 = \frac{1}{r}\sum_{j=0}^{r-1}\mu^{-\ell_ij}\mu^{\ell_ij}(\bar{x}_i)=\frac{1}{r}\sum_{j=0}^{r-1}\bar{x}_i =\bar{x}_i,
\]
hence $\{z_1,\dots,z_s,x_{s+1},\dots,x_n\}$ is a system of regular parameters in $A$.

\end{proof}

\begin{rem}\label{rem not henselian}
 In fact we do not need to assume in Lemma \ref{ap1} that $A$ is Henselian, but only that all the $r$-th roots of unity in $\kappa$ lift to $A$. 
The same is true in Remark~\ref{rem ap2} and Lemma~\ref{ap2}.
 \end{rem}

If we do not assume that $r$ is prime to $\Char(\kappa)$, Lemma \ref{ap1} is wrong. To see this, look at the following example:

\begin{ex}
\label{ex tame}
Let $\kappa$ be an algebraically closed field with $\Char(\kappa)=2$.
Then ${A\!:=\kappa\Pol x,y\Por}$ is a complete local ring with maximal ideal $\Em=(x,y)\subset A$.
Let $\alpha\in \Aut(A)$ with
$\alpha(P(x,y))=P(x,x+y)$ for all $P(x,y)\in A$.
We have that $\alpha^2(P(x,y))=P(x,2x+y)=P(x,y)$, because $\Char (\kappa)=2$, 
hence $\alpha ^2=\Id$.
Note that $\bar{\alpha}:\Em/\Em^2\to\Em/\Em^2 $ is not diagonalizable.
\end{ex}

Let $A$ be a ring, $\alpha\in \Aut(A)$ with $\alpha^r=id$. Then $\alpha$ defines an action of $G:=\mathbb{Z}/r\mathbb{Z}$ on $A$, and the subring
\[
 A^G\!:=\{a\in A\mid \alpha(a)=a\}\subset A
\]
is called the \emph{ring of invariants}.

\begin{rem}\label{rem ap2}
Let $A$ be as in Lemma \ref{ap1}. Then $\Em^G\!:=\Em\cap A^G\subset A^G$
is an ideal and we have
$A^G/\Em^G\hookrightarrow A/\Em=\kappa$.
With a proof similar to the proof of Lemma \ref{ap1},
we can show that there exists a lift $\tilde{s}\in A$ of  $s\in \kappa$ such that $\alpha(\tilde{s})=\tilde{s}$,
i.\thinspace e.~$\tilde{s}\in A^G$, and hence $A^G/\Em^G=\kappa$.
Hence there is a ringhomomorphism $A^G\to \kappa$, and hence we may consider $\kappa \otimes_{A^G}A$.

Note that $G:=\mathbb{Z}/r\mathbb{Z}$ acts on $\kappa\otimes_{A^G}\!A$ given by $\Id\otimes\alpha\in \Aut(\kappa\otimes_{A^G}\!A)$, such that the canonical maps $\rho_1:A\to \kappa\otimes_{A^G}A$ and $\rho_2: \kappa\to \kappa\otimes_{A^G}A$ are $G$-equivariant for this $G$-action, and the given $G$-action on $A$, and the trivial $G$-action on $\kappa$, respectively.
\end{rem}

\begin{lem}\label{ap2}
Let $A$ be as in Lemma \ref{ap1}. 
Then $ \kappa\otimes_{A^G}\!A\cong \kappa[x_1,\dots,x_m]/\mathfrak{I}$, $m\leq n$, and
\[
(\Id\otimes\alpha)(p(x_1,\dots,x_m))=p(\mu^{\ell_0}x_1,\dots,\mu^{\ell_m}x_m)
\]
for some $\ell_i\in \{1,\dots, r-1\}$, $p(x_1,\dots,x_m)\in \kappa\otimes_{A^G}\!A$, $\mu \in \kappa$ a primitive $r$-th root of unity, and
$\mathfrak{I}\subset \kappa[x_1,\dots, x_m]$ is the ideal generated by monomials of the form $x_1^{s_1}\dots x_m^{s_m}$ with $s_1\ell_1+\dots+s_m\ell_m=sr$, $s\in \mathbb{N}$.
\end{lem}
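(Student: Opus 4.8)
The plan is to identify $\kappa\otimes_{A^G}A$ with the quotient $A/\mathfrak{m}^GA$, where $\mathfrak{m}^G:=\mathfrak{m}\cap A^G$, and to compute this quotient explicitly after choosing adapted generators. First I would apply Lemma~\ref{ap1} to obtain a regular system of parameters $y_1,\dots,y_n\in\mathfrak{m}$ with $\alpha(y_i)=\mu^{\ell_i}y_i$, $\mu\in A$ a primitive $r$-th root of unity and $\ell_i\in\{0,\dots,r-1\}$; after reindexing I may assume $\ell_1,\dots,\ell_m\neq0$ and $\ell_{m+1}=\dots=\ell_n=0$, so that $y_{m+1},\dots,y_n\in A^G$, and (by uniqueness in Hensel's lemma) $\mu$ itself is $\alpha$-fixed, with residue class the primitive root appearing in the statement. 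Let $\mathfrak{c}\subseteq A$ be the ideal generated by $y_{m+1},\dots,y_n$ together with all monomials $y_1^{s_1}\cdots y_m^{s_m}$ of positive degree with $\sum_i s_i\ell_i\equiv0\pmod r$ (equivalently $\sum_i s_i\ell_i=sr$ for some integer $s\geq1$, since each $\ell_i\geq1$). The goal is to show $\mathfrak{m}^GA=\mathfrak{c}$ and $A/\mathfrak{c}\cong\kappa[x_1,\dots,x_m]/\mathfrak{I}$, $\alpha$-equivariantly, with $x_i$ the image of $y_i$.

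The inclusion $\mathfrak{c}\subseteq\mathfrak{m}^GA$ is immediate, since every listed generator lies in $\mathfrak{m}$ and is fixed by $\alpha$ (the $y_j$, $j>m$, by construction; the monomials because $\alpha$ scales $y_1^{s_1}\cdots y_m^{s_m}$ by $\mu^{\sum_i s_i\ell_i}=1$), hence lies in $\mathfrak{m}^G$. For the reverse inclusion $\mathfrak{m}^G\subseteq\mathfrak{c}$ I would induct on $d$ along the $\mathfrak{m}$-adic filtration: given $a\in\mathfrak{m}^d\cap A^G$, its leading form lies in the $\alpha$-fixed part of $(\operatorname{gr}_{\mathfrak{m}}A)_d$, and since $A$ is regular one has $\operatorname{gr}_{\mathfrak{m}}A=\kappa[\bar y_1,\dots,\bar y_n]$ with $\alpha$ trivial on $\kappa$ and $\bar y_i\mapsto\mu^{\ell_i}\bar y_i$, so this fixed part is spanned by the degree-$d$ monomials $\bar y_1^{v_1}\cdots\bar y_n^{v_n}$ with $\sum_i v_i\ell_i\equiv0\pmod r$. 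Each such monomial lies in $\mathfrak{c}$ (it involves some $y_j$ with $j>m$, or else is one of the listed generators) and is $\alpha$-fixed; lifting its coefficient to $A^G$ via the surjection $A^G\twoheadrightarrow\kappa$ of Remark~\ref{rem ap2} and subtracting, I get $a\in\mathfrak{c}+(\mathfrak{m}^{d+1}\cap A^G)$. Since $\mathfrak{c}\supseteq(y_1^r,\dots,y_m^r,y_{m+1},\dots,y_n)$, which is $\mathfrak{m}$-primary, we have $\mathfrak{m}^N\subseteq\mathfrak{c}$ for some $N$, so the induction stabilizes and $a\in\mathfrak{c}$. Thus $\kappa\otimes_{A^G}A=A/\mathfrak{m}^GA=A/\mathfrak{c}$.

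It remains to identify $A/\mathfrak{c}$ with $\kappa[x_1,\dots,x_m]/\mathfrak{I}$. The ring $A/\mathfrak{c}$ is Artinian local with residue field $\kappa$; in mixed characteristic $p\in\mathfrak{m}^GA=\mathfrak{c}$ (being a fixed element of $\mathfrak{m}$), so $A/\mathfrak{c}$ is equicharacteristic and carries a coefficient field $\cong\kappa$ by Cohen's structure theorem. The images $\bar y_1,\dots,\bar y_m$ generate its nilpotent maximal ideal, so $x_i\mapsto\bar y_i$ defines a surjection of $\kappa$-algebras $\kappa[x_1,\dots,x_m]\twoheadrightarrow A/\mathfrak{c}$; it kills $\mathfrak{I}$, because the monomial generators of $\mathfrak{I}$ lie in $\mathfrak{c}$, so it factors through a surjection $\bar\psi\colon\kappa[x_1,\dots,x_m]/\mathfrak{I}\twoheadrightarrow A/\mathfrak{c}$. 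To see $\bar\psi$ is bijective I would compare lengths: $\operatorname{length}_A(A/\mathfrak{c})=\dim_\kappa\operatorname{gr}_{\mathfrak{m}}(A/\mathfrak{c})$, and because the chosen generators of $\mathfrak{c}$ are monomials in the regular system of parameters $y_1,\dots,y_n$ they form a standard basis (all pertinent $S$-polynomials vanish identically), whence $\operatorname{gr}_{\mathfrak{m}}(A/\mathfrak{c})\cong\kappa[x_1,\dots,x_m]/\mathfrak{I}$; thus the two finite-dimensional $\kappa$-vector spaces have equal dimension and $\bar\psi$ is an isomorphism. Finally $\alpha$ stabilizes $\mathfrak{c}$, descends to $\Id\otimes\alpha$ on $\kappa\otimes_{A^G}A$, and under $\bar\psi$ it is given by $x_i\mapsto\bar\mu^{\ell_i}x_i$ with $\bar\mu\in\kappa$ the primitive $r$-th root of unity, and $m\leq n$ is clear. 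I expect the main difficulty to be the reverse inclusion $\mathfrak{m}^G\subseteq\mathfrak{c}$ together with the length count (the standard-basis point), the mixed-characteristic case hinging on the observation that $p$ dies in $A/\mathfrak{c}$.
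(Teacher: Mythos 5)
Your proof follows the same route as the paper's: both identify $\kappa\otimes_{A^G}A$ with $A/A\mathfrak{m}^G$, invoke Lemma~\ref{ap1} to get an adapted regular system of parameters $y_1,\dots,y_n$ on which $\alpha$ acts diagonally, and then read off the presentation $\kappa[x_1,\dots,x_m]/\mathfrak{I}$ from the resulting monomial structure. The difference is one of rigor rather than strategy. The paper asserts rather abruptly (``So $A\mathfrak{m}^G\subset A$ is the ideal generated by monomials \dots'') that $A\mathfrak{m}^G$ equals the monomial ideal, and then derives the isomorphism from the surjection $\kappa[x_1,\dots,x_m]\twoheadrightarrow\tilde A$ with only a brief appeal to $\ker\rho_1=A\mathfrak{m}^G$, leaving injectivity implicit. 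Your argument via $\operatorname{gr}_{\mathfrak{m}}A$ for the reverse inclusion $\mathfrak{m}^G\subseteq\mathfrak{c}$, together with the mixed-characteristic observation that $p\in\mathfrak{c}$ (so Cohen's theorem provides a coefficient field) and the length comparison for injectivity, supplies full proofs of exactly those two steps. It is a tightened version of the same computation, not a different one.
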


\begin{proof}
Set $\tilde{A}\!:=\kappa\otimes_{A^G}\!A$.
Consider
$ \rho_1: A\to \tilde{A};\ a\mapsto 1\otimes a$.
Take any $s\in \kappa$ and $a\in A$. As above we can chose a lift $\tilde{s}\in A^G$ of $s$.
Hence we get $\rho_1(\tilde{s}a)=1\otimes \tilde{s}a=s\otimes a$, hence $\rho_1$ is surjective.
Note that $0=\rho_1(a)=1\otimes a$ for some $a\in A$ if and only if we can write $a=a_1a_2$ for some $a_1\in A^G$, $a_2\in A$, and $r^G(a_1)=0$, i.\thinspace e.~$a_1\in \Em^G\!\!:=\Em\cap A^G$,
hence $ \Ker(\rho_1)=A\Em^G$.\\
By Lemma \ref{ap1} there exists a system of parameters $y_1,\dots,y_n\in A$ such that ${\alpha(y_i)=\tilde{\mu}^{\ell_i}y_i}$, $\ell_i\in \{0,\dots, r\}$, $\tilde{\mu} \in A$ a primitive $r$-th root of unity, which is a lift of $\mu\in \kappa$.
So $A\Em^G\subset A$ is the ideal generated by monomials of the form $y_1^{s_1}\dots y_n^{s_n}$ with $s_1\ell_1+\dots+s_n\ell_n=sr$, $s\in \mathbb{N}$.
As $\Em^{nr}$ is generated by monomials of degree $nr$ in the $y_i$, all generators are divisible by $y_i^r$ for at least one $i$.
Note that for all $i$, $y_i^r\in \Em^G$. Hence $\Em^{nr}\subset A\Em^G$.\\
Set $N\!:=nr$. So $\tilde{A}\cong \kappa\otimes_{{A}^G}\!(A/\!\Em^N)$.
We show by induction that this is generated as a $\kappa$-algebra by the images of the $y_i$.
The induction assumption is clear, because in this case $\kappa\otimes_{{A}^G}\!(A/\!\Em^1)\cong \kappa$.
Assume that $\kappa\otimes_{{A}^G}\!(A/\!\Em^{l})$ is generated as a $\kappa$-algebra by the images of the $y_i$.
Let $\tilde{A}_{l+1}$ be the subalgebra of ${\kappa\otimes_{{A}^G}\!(A/\!\Em^{l+1})}$ generated by the images of the $y_i$.
Take any element $1\otimes a$ in $\kappa\otimes_{A^G}\!(A/\!\Em^{l+1})$.
By the induction assumption there is an $\tilde{a}\in A/\!\Em^{l+1}$, such that $a-\tilde{a}\in \Em^l\!/\!\Em^{l+1}$,
and $1\otimes \tilde{a}\in \tilde{A}_{l+1}$.
Note that $\Em^l/\Em^{l+1}$ is a $\kappa$-vector space generated by monomials of degree $l$ in the $y_i$.
So $1\otimes (\tilde{a}-a)\in \tilde{A}_{l+1}$, and therefore the same holds for $1\otimes a= 1\otimes \tilde{a}+1 \otimes (a-\tilde{a})$.
Altogether, $\tilde{A}$ is generated as a $\kappa$-algebra by the images of the $y_i$.\\
Let $x_1,\dots, x_m$ be the images of those $y_i$ with $\ell_i\neq 0$.
Note that, if $\ell_i=0$, $y_i\in \Em^G\subset \Ker(\rho_1)$, i.\thinspace e.~$\rho_1(y_i)=0$. Hence the $x_i$ generate $\tilde{A}$ as a $\kappa$-algebra.
Renumbering the $y_i$, we may assume that $\rho_1(y_i)=x_i$.
We have
\[
 (\Id\otimes\alpha)(x_i)=(\Id\otimes\alpha)(1\otimes y_i)=1\otimes\alpha(y_i)=1\otimes \tilde{\mu}^{\ell_j}y_i=\mu^{\ell_i}x_i.
\]
 Moreover, using $\Ker(\rho_1)=A\Em^G$, we obtain that
\[
 \tilde{A}\cong \kappa[x_1,\dots,x_m]/{\mathfrak{I}}
\]
with $\mathfrak{I}$ generated by $x_1^{s_1}\dots x_m^{s_m}$ with $s_1\ell_1+\dots+s_m\ell_m=sr$, $s\in \mathbb{N}$.
As $(\Id\otimes\alpha)$ is a $\kappa$-morphism, $(\Id\otimes\alpha)(p(x_1,\dots,x_m))=p(\mu^{\ell_1}x_1,\dots,\mu^{\ell_m}x_m)$ with $\ell_i\in \{1,\dots, r\}$ for $p(x_1,\dots, x_m)\in \tilde{A}$.
\end{proof}

\begin{rem}
Note that if $A$ is of mixed characteristic, it is not a $\kappa$-algebra, but
$A\otimes_{A^G}\kappa$ is. As we tensor over $A^G$, we keep the information concerning the $G$-action on $A$.
\end{rem}

\bibliographystyle{babalpha}

	\bibliography{pubref}

\end{document}